\newtheorem{Theorem}{Theorem}[section]      
\newtheorem{Proposition}[Theorem]{Proposition}    
\newtheorem{Lemma}[Theorem]{Lemma}            
\newtheorem{Definition}[Theorem]{Definition}
\newtheorem{Corollary}[Theorem]{Corollary}    
\newtheorem{Remark}[Theorem]{Remark}           
\newcommand{\Om}{\Omega}  
\newcommand{\eps}{\varepsilon} 
\newcommand{\R}{\mathbb{R}} 
\renewcommand{\div}{\textrm{div}}
\def\qed{\hfill $\square$ \goodbreak \smallskip}
\title
{New examples of extremal domains for the first eigenvalue of the Laplace-Beltrami operator in a Riemannian manifold with boundary}
\author
{Jimmy Lamboley\footnote{Universit\'e Paris Dauphine - CEREMADE, UMR CNRS 7534, Place du Mar\'echal de Lattre de Tassigny, 75775 Paris Cedex 16, France.
{\it e-mail:} lamboley@ceremade.dauphine.fr}, Pieralberto Sicbaldi\footnote{Aix Marseille Universit\'e - CNRS - Ecole Centrale Marseille, I2M, UMR 7373, 13453 Marseille, France.\newline
{\it e-mail:} pieralberto.sicbaldi@univ-amu.fr}}
\date{\today}
\begin{document}

\maketitle

{\bf Abstract.} {We build new examples of} extremal domains with small prescribed volume for the first eigenvalue of the Laplace-Beltrami operator in some Riemannian manifold with boundary. These domains are close to half balls of small radius centered at a nondegenerate critical point of the mean curvature function of the boundary of the manifold, and their boundary intersects the boundary of the manifold orthogonally.

\section{Introduction}

New examples of domains with small prescribed volume that are critical points for the first eigenvalue of the Dirichlet Laplace-Beltrami operator are built in \cite{pacsic}, under the hypothesis that the Riemannian manifold has at least one nondegenerate critical point of the scalar curvature function. In that case, such domains are given by small perturbations of geodesic balls of small radius centered at a nondegenerate critical point of the scalar curvature. This result has been generalized in \cite{del-sic} to all compact Riemannian manifolds by eliminating the hypothesis of the existence of a nondegenerate critical point of the scalar curvature.

\medskip 

Such examples of critical points for the Laplace-Beltrami operator are parallels to similar shape examples of critical points for the area functional, under the same assumptions, which lead to the construction of constant mean curvature small topological spheres, see \cite{Pac-Xu,Ye}.

\medskip

The aim of this paper is to give some new examples of domains $\Omega$ that are critical points for the first eigenvalue of the Laplace-Beltrami operator (i.e. extremal domains) in some Riemannian manifolds $M$ with boundary. Such examples are new because the boundary of the domain is partially included in the boundary of the manifold. The domains we obtain are close to half-balls centered at a point of $\partial M$ where the mean curvature of $\partial M$ is critical and the criticality is not degenerate. In particular, in the simplest situation, $M$ can be a domain of the Euclidean space, see Fig. \ref{fig1}. Again, we can make a parallel with the case of the area,  for which a similar result has been proven in the Euclidian case and dimension 3 in \cite{fall}, though it is expected to be valid in the general case.

\begin{figure}[!ht]
\centering
{\scalebox{.5}{\input{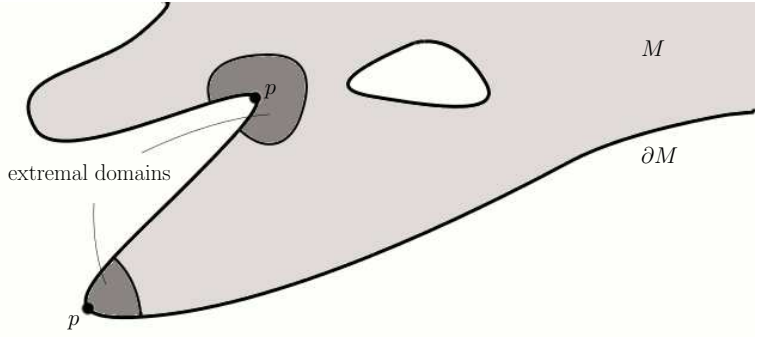}}}
\caption{$M$ can be a Euclidean domain (bounded or not). If $p$ is a nondegenerate critical point for the mean curvature of $\partial M$, then it is possible to construct an extremal domain as a perturbation of a half-ball centered at $p$.}
\label{fig1}
\end{figure}

\medskip

Assume that we are given $(M,g)$ an $(n+1)$-dimensional Riemannian manifold, $n \geq 1$, with boundary $\partial M \neq \emptyset$. The boundary $\partial M$ is a smooth $n$-dimensional Riemannian manifold with the metric $\tilde{g}$ induced by $g$. For a domain $\Omega$ contained in the interior of $M$, $\Omega \subset \mathring M$, the first eigenvalue of the Laplace-Beltrami operator with 0 Dirichlet boundary condition is then given by
\[
\lambda_{\Omega} = \displaystyle \min_{u \in H_0^1(\Omega)} \frac{\displaystyle \int_{\Omega} |\nabla u|^2 }{\displaystyle \int_{\Omega} u^2} \,.
\]
If $\Omega$ is a boundary domain (i.e. a domain such that $\partial\Om\cap\partial M\neq\emptyset$), we consider the first eigenvalue of the Laplace-Beltrami operator given by
\begin{equation}\label{lambda1}
\lambda_{\Omega} = \displaystyle \min_{u \in \widetilde H_0^1(\Omega)} \frac{\displaystyle \int_{\Omega} |\nabla u|^2 }{\displaystyle \int_{\Omega} u^2}
\end{equation}
where $\widetilde H_0^1(\Omega)$ denotes the closure of the space $$\{\varphi\in C^\infty(\Om), \;\;\textrm{Supp}(\varphi)\subset \Om\cup\partial M\}$$ for the $H^1$-norm. 
It is very classical that the optimization problem \eqref{lambda1} admits a nonnegative solution if $\Om$ has finite volume, and if $\Om$ is connected such a solution is unique among nonnegative functions whose $L^2$-norm is 1. This function is then called the first eigenfunction of $\Om$.

\medskip

Under smoothness assumption (for example if $\Om$ is a piecewise $C^{1,\alpha}$-domain, see Section \ref{ssect:edge} for more detailed definitions, the space $\widetilde H_0^1(\Omega)$ is equal to the space of functions in $H^1(\Omega)$ with 0 Dirichlet condition on $\partial \Omega \cap \mathring M$, and the function $u$ solving \eqref{lambda1} satisfies:
\begin{equation}\label{P}
\left\{
\begin{array}{rcccl}
	\Delta_{g} \, u + \lambda_{\Om} \, u & = & 0 & \textnormal{in} & \Om \\[1mm]
	u & = & 0 & \textnormal{on} & \partial \Om \cap \mathring M, \\[1mm]
	\displaystyle  g( \nabla  u ,  \nu) & = & 0 & \textnormal{on} & \partial \Om \cap \partial M
\end{array}
\right.
\end{equation}
where $\nu$ denotes the outward normal vector to $\partial M$, which is well-defined as soon as $\Om$ is included in a small enough ball, which will be the case in the whole paper.
This will be referred to as a mixed eigenvalue problem over $\Om$. Moreover, it is also well-known that if there exists $(u,\lambda)$ a nontrivial solution of \eqref{P} for a connected domain $\Om$ such that $u$ is nonnegative, then $\lambda=\lambda_{\Om}$ is the first eigenvalue of $\Om$, and $u$ is the first eigenfunction of $\Om$, up to a multiplicative constant.
\medskip

Let us consider a boundary domain $\Omega_0 \subset M$. $\Omega_0$ is said to be extremal if $\Omega \longmapsto \lambda_\Omega$ is critical at $\Omega_0$ with respect to variations of the domain $\Omega_0$ which preserve its volume. In order to make this notion precise, we first introduce the definition of a {\em  deformation} of $\Omega_0$.
 
\begin{Definition}\label{def:defo}
We say that $( \Omega_t )_{t \in (-t_0, t_0)}$ is a deformation of $\Omega_0$, if there exists a vector field $V$ on $M$, of class $C^2$, such that its flow $\xi$, defined for $t\in(-t_{0},t_{0})$ by 
\[
\frac{d\xi}{dt} (t,p)= V (\xi(t,p)) \qquad \mbox{and} \qquad \xi(0, p) =p \,,
\] 
preserves the boundary of the manifold, i.e. $\xi(t, p) \in \partial M$ for all $(t,p) \in (-t_{0},t_{0})\times\partial M$, and for which 
\[
\Omega_t = \xi (t, \Omega_0).
\] 
The deformation is said to be volume preserving if the volume of $\Omega_t$ does not depend on $t$. 
\end{Definition}
If $(\Omega_t)_{t \in (-t_0, t_0)}$ is a deformation of $\Omega_0$, we denote by $\lambda_t$ the first eigenvalue of the Laplace-Beltrami operator $-\Delta_{g}$ on $\Omega_t$.
We prove in Section \ref{charac} that $t \longmapsto \lambda_t$
is smooth in a neighborhood of $t=0$. If $\Om\subset \mathring{M}$ this fact is standard and follows from the implicit function theorem together with the fact that the first eigenvalue of the Laplace-Beltrami operator is simple, see for example \cite{henrotpierre}. When the boundary $\partial M$ is invariant by the flow of the deformation, as required in Definition \ref{def:defo}, a similar strategy still works when $\partial\Om\cap\partial M\neq\emptyset$, but this is less classical since one needs to manage the singularities of the boundary domains under consideration, see Proposition \ref{lambda}. The derivative at 0 of $t\mapsto\lambda_{t}$ is then called the shape derivative of $\Om\mapsto\lambda_{\Om}$ at $\Om_{0}$ in the direction $V$.

\medskip

This remark allows us to give the definition of an extremal domain.
\begin{Definition}\label{def:extremal}
A domain $\Omega_{0}$ is an \textit{extremal domain} for the first eigenvalue of $-\Delta_{g}$ if for any volume preserving deformation $\{{\Omega}_t\}_t$ of ${\Omega}_{0}$, we have 
\begin{equation}\label{eq:extremal}
\frac{d \lambda_t}{dt} |_{t =0} = 0 \, ,
\end{equation}
where $\lambda_t=\lambda_{\Om_{t}}$ as defined in \eqref{lambda1}.
\label{def:1}
\end{Definition}

All along the paper, we will use a special system of coordinates, that we remind here: let $p \in \partial M$, and let $N$ be the unit normal vector field on $\partial M$ near $p$ that points into $M$. We fix local geodesic normal coordinates $x = (x^{1}, ..., x^n)$ in a neighborhood of $0 \in \mathbb{R}^n$ to parametrize $U_{p}$ a neighborhood of $p$ in $\partial M$ by $\Phi$. We consider the mapping
\begin{equation}\label{eq:normalcoord}
\Psi(x^0,x) = \mbox{Exp}_{\Phi(x)} (x^0 N(\Phi(x)))
\end{equation}
which is a local diffeomorphism from a neighborhood of $0 \in \overline{\mathbb{R}^{n+1}_+}$  (where $\mathbb{R}^{n+1}_+ = \{(x^0,x) \in \mathbb{R}^{n+1}\, :\, x^0 >0 \}$) into $V_{p}$ a neighborhood of $p$ in $M$. For all $\eps >0$ small enough, we denote $ B^+_\eps \subset \mathbb R^{n+1}_+$ the half-ball given by the Euclidean ball of radius $\eps$ centered at the origin and restricted to $x^0 > 0$, and we denote $B^+_{g,\eps}(p)=\Psi( B^+_{\eps}) \subset \mathring{M}$.

\begin{figure}[!ht]
\centering
{\scalebox{.55}{\input{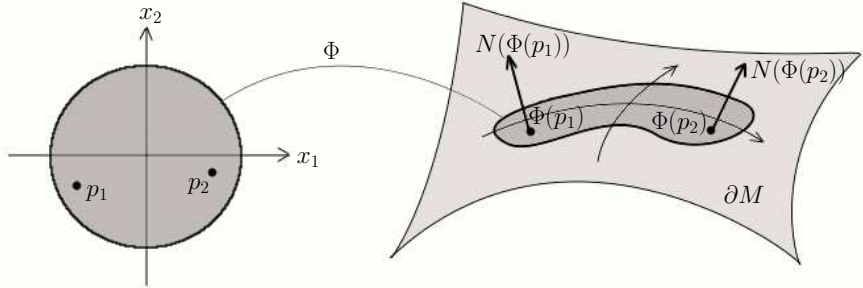}}}
\caption{Our coordinates are defined as $(x^0,x)$, $x$ being the normal geodesic coordinates on $\partial M$ and $x^0$ the coordinate associated to the normal direction.}
\label{fermi}
\end{figure}

\medskip

Now we can state the main result of the paper:
\begin{Theorem}\label{maintheorem}
Assume that $p_{0}\in\partial M$ is a nondegenerate critical point of $\rm{H}$, the mean curvature function of $(\partial M,\tilde{g})$. Then, for all $\eps > 0$ small enough, say $\eps \in (0, \eps_0)$, there exists a boundary domain $\Omega_\eps \subset M$ such that~:
\begin{itemize}
\item[(i)] The volume of $\Omega_\eps$ is equal to the Euclidean volume of $B_\eps^+$. 
\item[(ii)] The domain $\Omega_\eps$ is extremal in the sense of Definition~\ref{def:1}.
\item[(iii)] The boundary $\overline{\partial \Omega_\eps \cap \mathring{M}}$ intersects $\partial M$ orthogonally,
\item[(iv)] The boundary $\partial \Omega_\eps \cap \mathring{M}$ is analytic if $M$ is analytic.
\end{itemize}
Moreover, there exists $c >0$ and, for all $\eps \in (0, \eps_0)$, there exists $p_\eps \in \partial M$ such that $\overline{\partial \Omega_\eps \cap \mathring{M}}$ is a normal graph over $\overline{\partial B^+_{g,\eps} (p_\eps) \cap \mathring{M}}$ for some function $w_\eps$ with 
\[
\|Êw_\eps \|_{C^{2, \alpha}\big( \overline{\partial B^+_{g,\eps} (p_\eps) \cap \mathring{M}}\big)} \leq c \, \eps^3.
\qquad \mbox{and} \qquad {\rm dist} (p_\eps, p_0) \leq c \, \eps \, .
\]
\label{th:1}
\end{Theorem}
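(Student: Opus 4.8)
The plan is to follow the by now classical Lyapunov–Schmidt / finite-dimensional reduction scheme used in \cite{pacsic} and \cite{del-sic}, adapted to the mixed eigenvalue problem \eqref{P}. First I would set up the functional framework: for $\eps>0$, a point $p\in\partial M$ close to $p_{0}$, and a function $v$ in a suitable Hölder space of functions on the half-sphere $S^{n}_{+}=\partial B^{+}_{1}\cap\mathbb{R}^{n+1}_{+}$ satisfying a Neumann-type condition along the equator, one considers the perturbed half-ball $B^{+}_{g,\eps}(p)$ whose spherical part of the boundary is the normal graph of $\eps(1+v)$. After rescaling the metric by $\eps^{-2}$ in the Fermi coordinates \eqref{eq:normalcoord}, the rescaled first eigenfunction $\phi_{\eps,p,v}$ and eigenvalue $\mu_{\eps,p,v}$ depend smoothly on the parameters, and can be expanded in powers of $\eps$ with leading term the first eigenfunction $\phi_{1}$ of the flat half-ball $B^{+}_{1}$ (which, by reflection across the equatorial hyperplane, is just the radial first Dirichlet eigenfunction of the full unit ball, hence explicit in terms of Bessel functions).

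The next step is to derive the geometric Euler–Lagrange equation characterizing extremal boundary domains. Using the shape derivative formula established in Section \ref{charac} (Proposition \ref{lambda}) together with the orthogonality condition (iii) forced along $\partial\Om\cap\partial M$, a domain whose spherical boundary is a graph over $\partial B^{+}_{g,\eps}(p)$ is extremal if and only if the normal derivative of its first eigenfunction is constant on $\overline{\partial\Om\cap\mathring M}$ \emph{and} the free boundary meets $\partial M$ orthogonally — this is the overdetermined problem analogous to Serrin's. So I would define the map
\[
F(\eps,p,v)=\left(g\big(\nabla\phi_{\eps,p,v},\cdot\,\big)\Big|_{\text{spherical boundary}}-\text{(its mean)}\;,\;\text{orthogonality defect}\right),
\]
valued in the space of mean-zero Hölder functions on $S^{n}_{+}$ (times the appropriate space along the equator), so that $F(\eps,p,v)=0$ encodes extremality at fixed volume. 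One checks $F(0,p,0)=0$ and computes the partial differential $D_{v}F(0,p,0)$: it is essentially the Dirichlet-to-Neumann–type operator obtained by linearizing, whose kernel consists of the restrictions to $S^{n}_{+}$ of the linear coordinate functions $x^{1},\dots,x^{n}$ (the coordinates tangent to $\partial M$), reflecting the translational invariance of the flat model along $\partial M$; the coordinate $x^{0}$ is excluded by the Neumann condition on the equator. This is exactly the situation of a degenerate implicit function theorem.

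Then I would quotient out this $n$-dimensional kernel: by the implicit function theorem applied on the orthogonal complement, for each small $\eps$ and each $p$ near $p_{0}$ there is a unique small $v=v(\eps,p)$, of size $O(\eps)$ (and in fact $O(\eps^{2})$ after a Taylor expansion because the $\eps^{1}$ term vanishes by a parity argument, giving the $\eps^{3}$ bound in the statement after unscaling), solving the infinitely many equations transverse to the kernel. What remains is the $n$-dimensional \emph{bifurcation equation}: the projection of $F(\eps,p,v(\eps,p))$ onto $\mathrm{span}(x^{1},\dots,x^{n})$, a map $\eps\mapsto \mathcal{B}_\eps(p)\in\mathbb{R}^{n}$. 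The key computation — and the main obstacle — is to expand $\mathcal{B}_\eps(p)$ and show that, after dividing by the appropriate power of $\eps$, its leading term is (a nonzero constant times) $\nabla\mathrm{H}(p)$, the gradient of the mean curvature of $\partial M$ at $p$. This requires carefully expanding the metric coefficients in Fermi coordinates \eqref{eq:normalcoord} to the relevant order — where the second fundamental form of $\partial M$, and hence $\mathrm{H}$, enters — and tracking how this perturbs the eigenfunction and its boundary normal derivative; this is the analogue of the computation in \cite{pacsic} where the scalar curvature appears, and of \cite{fall} for the area functional. Given that expansion, since $p_{0}$ is a \emph{nondegenerate} critical point of $\mathrm{H}$, the map $p\mapsto \mathcal{B}_\eps(p)/\eps^{k}$ has nondegenerate differential near $p_{0}$ for $\eps$ small, so a final application of the implicit function theorem produces $p_{\eps}$ with $\mathrm{dist}(p_{\eps},p_{0})\le c\,\eps$ solving $\mathcal{B}_\eps(p_\eps)=0$; the domain $\Omega_{\eps}=B^{+}_{g,\eps}(p_{\eps})$ deformed by $v(\eps,p_{\eps})$ is then extremal, has the prescribed volume by construction, meets $\partial M$ orthogonally, and inherits analyticity from elliptic regularity when $M$ is analytic, establishing (i)–(iv) and the quantitative bounds.
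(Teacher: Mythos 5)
Your overall Lyapunov--Schmidt scheme is the one the paper follows, and your identification of the kernel (linear functions of $y^1,\dots,y^n$) and of the mean-curvature gradient in the bifurcation equation are correct. But there is a genuine gap at the step where you dispose of the low-order terms. You claim that the order-$\eps$ term of $v(\eps,p)$ ``vanishes by a parity argument'', so that $v=O(\eps^2)$ and the projection/estimates go through as in \cite{pacsic}. This is false in the present setting, and it is precisely the new difficulty of the boundary case: in the Fermi coordinates \eqref{eq:normalcoord} the rescaled metric carries a first-order correction $2\,\eps\, g(\nabla_{E_i}N,E_j)\,y^0$ coming from the second fundamental form of $\partial M$ (Proposition \ref{fermi-exp}), and since $y^0\ge 0$ on the half-ball there is no reflection symmetry in $y^0$ available to kill it. Consequently $F(p,\eps,0)=\eps f_p(y^0)+\mathcal O(\eps^2)$ and $\bar v(p,\eps)=\eps\,\tilde v_p(y^0)+\mathcal O(\eps^2)$ with generically nonzero $f_p,\tilde v_p$ (Lemma \ref{le:3.33}, Proposition \ref{prop:partsol}). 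The scheme survives not because these terms vanish but because they depend only on $y^0$, i.e.\ are axially symmetric in $y'$, hence $L^2$-orthogonal to the kernel functions $\langle b,\cdot\rangle$ with $b^0=0$; this is what makes the projection onto the kernel start at order $\eps^2$ with the term $C\,\eps^2\,\tilde g(\nabla^{\tilde g}{\rm H}(p),\Theta(b'))$ (Proposition \ref{kernelproj}) and requires the extra bookkeeping of Lemma \ref{le:4444}. As written, your parity cancellation is wrong, and both the bifurcation expansion and your route to the quantitative graph estimate rest on it.

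A secondary deviation: you build $F$ with two components, the normalized Neumann data on the spherical part \emph{and} an ``orthogonality defect'' along the equator, and propose to solve the coupled system. The paper does not do this, and for good reason: Corollary \ref{coro:carac+angle} shows that constancy of $g(\nabla u,\nu)$ on $\partial\Omega\cap\mathring M$ already forces the contact angle to be $\pi/2$ (a consequence of the non-locality of the overdetermined condition), and the admissible perturbations $\bar v$ are taken with a Neumann condition on $\partial S^n_+$ from the start. Your two-component formulation makes the problem look overdetermined in the unknown $v$, and you give no linear theory (kernel/surjectivity of the joint linearization) for it; dropping the second component, as the paper does, is both necessary-free and what the characterization of extremality in Section \ref{charac} licenses.
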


This result will be proven in Section \ref{ssect:mainproof}. { The strategy of the proof of this result is inspired by \cite{pacsic}.} In order to give the outline of the paper, we recall here the strategy of the proof and insist on the main differences with \cite{pacsic}. The first step is to characterize the extremality of a domain $\Om_{0}$ with the Euler-Lagrange equation, that leads to:
\begin{equation}\label{eq:ELL}
g(\nabla u,\nu)=\textrm{constant}\;\;\;\;\;\;\textrm{ on }\partial\Om_{0}\cap\mathring{M}.
\end{equation}
The difficulty here is to prove this characterization for domains that are only piecewise smooth (see Section \ref{ssect:edge} where we introduce the notion of boundary edge domain and analyze the regularity theory of mixed boundary value problem in such domains). In particular, we prove in Section \ref{ssect:derivative} that in order to be extremal it is enough for a domain to satisfy \eqref{eq:extremal}  for deformations that preserve the contact angle on $\partial M$; this important fact will be used in the rest of the paper for the construction of extremal domains. This is an interesting difference with the case of critical points of the area functional, as we explain in Section \ref{ssect:isop}: condition \eqref{eq:ELL} contains already the information that the contact angle between $\partial\Om_{0}\cap\mathring{M}$ and $\partial\Om_{0}\cap\partial M$ is constant and equal to $\pi/2$, see Corollary \ref{coro:carac+angle}; this is due to the non-locality of the Euler-Lagrange equation for this problem. It also implies the analytic regularity of $\partial\Om_{0}\cap\mathring{M}$.

\medskip

Then, thanks to a dilation of the metric and a control of the volume constraint, we reformulate in Section \ref{sect:reformulation} the problem into solving for any small $\eps$ the equation
\begin{equation}\label{eq:equation}
F(p,\eps,\bar{v})=0
\end{equation}
where $p\in\partial M$, $\bar{v}\in C^{2,\alpha}(S^n_{+})$ is a function that parametrize a perturbation of the half-geodesic ball $B^+_{g,\eps}(p)$, and $F(p,\eps,\bar{v})$ represents the difference between $g(\nabla u,\nu)$ and its mean value on the boundary of this perturbed half geodesic ball. We then want to solve this equation for $\eps>0$ by using the implicit function Theorem and therefore study the operator $\partial_{\bar{v}}F(p,0,0)$, which is basically related to the second order shape derivative of $\lambda_{1}$ at the Euclidian half-ball. This is the purpose of Sections \ref{ssect:linearization} and \ref{ssect:H}, where we use a symmetrization argument to come down to the study of the same operator in the Euclidian ball, which has been done in \cite{pacsic}. As expected, that operator has a nontrivial kernel (because of the invariance of $\lambda_{1}$ by translation along $\partial\R^{n+1}_{+}$ in the Euclidian setting) and we are only able to solve
$$F(p,\eps,\bar{v}(p,\eps))=k(p,\eps)$$
where $k(p,\eps)$ is a linear function induced by an element of $\partial \R^{n+1}_{+}$, see Proposition \ref{prop:partsol}. Here comes the final step of the proof of Theorem \ref{maintheorem}, which takes into account the geometry of $\partial M$: by studying the expansion of $F, \bar{v}$ with respect to $\eps$, we prove in the end of Section 4 that close to a point $p_{0}$ which is a nondegenerate critical point of the mean curvature of $\partial M$, one can chose $p_{\eps}$ such that $k(p_{\eps},\eps)=0$ and conclude the proof. We insist on the fact that this step is more involved here than in \cite{pacsic}: indeed, the expansions in $\eps$ contain lower order term than in the case without boundary (see Lemma \ref{le:3.33} and Propositions \ref{prop:partsol}, \ref{kernelproj}). Nevertheless, thanks to the choice of our coordinates, the strategy still applies because these lower order terms are orthogonal to linear functions induced by elements of $\partial \R^{n+1}_{+}$. 

\section{Characterization of boundary extremal domains}\label{charac}

\medskip
In this section, we focus on an analytic characterization of extremal domains. The main difficulty here is to handle the shape derivative of $\Om\mapsto\lambda_{\Om}$ in a nonsmooth setting. Indeed, because of the presence of a boundary in $M$, we are naturally led to deal with domains that are only piecewise smooth.
First, we will treat the regularity for the mixed problem \eqref{P} in some domains called boundary edge domains. We compute then the shape derivative of $\Om\mapsto\lambda_{\Om}$ in this setting. Since we have to deal with possibly nonsmooth eigenfunctions, one needs to carefully prove the differentiability of $\Om\mapsto\lambda_{\Om}$ and compute the shape derivative. We will also insist on some important aspects of the non-locality of the extremality condition for $\lambda_{1}$, and compare it with the case of critical points for the area functional.

\subsection{Boundary edge domains and regularity of the eigenfunction}\label{ssect:edge}

\begin{Definition}
Let $\Omega$ be a boundary domain of the manifold $M$, that is to say $\partial\Om\cap\partial M\neq\emptyset$. We say that $\Omega$ is a boundary edge domain if it satisfies the following condition:
\begin{enumerate}
\item $\overline{\partial \Omega \cap \mathring{M}}$ and $\partial \Omega \cap \partial M$ are smooth $n$-dimensional submanifolds with boundary,
\item $\Gamma:=\overline{\partial \Omega \cap \mathring{M}} \cap \partial M$ is a $(n-1)$-dimensional smooth submanifold without boundary. 
\end{enumerate}
In that case, given $p\in\Gamma$ we can define $\omega(p)$ the angle between the normal vector to $\Gamma$ tangent to $\partial M$ and the normal vector to $\Gamma$ tangent to $\partial\Om\cap\mathring{M}$. The function $\omega:\Gamma\to [0,\pi]$  will be referred to as the contact angle of the domain $\Om$, see Fig. \ref{borde}.
\end{Definition}

\begin{figure}[!ht]
\centering
{\scalebox{.5}{\input{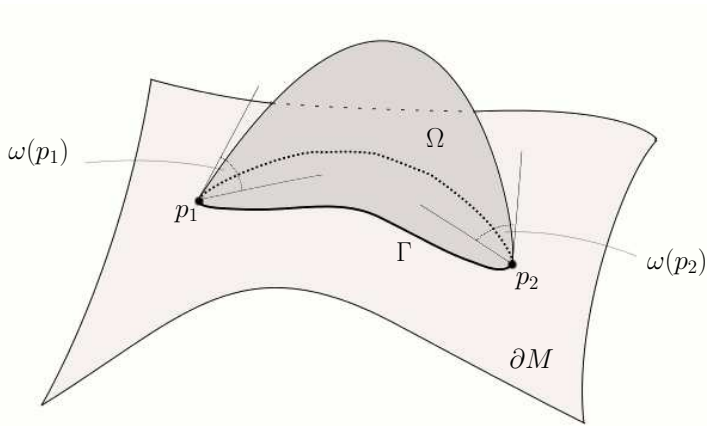}}}
\caption{A boundary edge domain in $M$}
\label{borde}
\end{figure}

\begin{Proposition}\label{prop:H3/2}
Let $\Om$ be a connected boundary edge domain of finite volume such that the contact angle $\omega$ is strictly between 0 and $\pi$. Then there exists $\eps>0$ such that for any $f\in H^{-1/2+\eps}(\Om)$, the solution $u$ of
\begin{equation}\label{eq:mixed}
\left\{
\begin{array}{rcccl}
	-\Delta_{g} \, u & = & f & \textnormal{in} & \Om \\[1mm]
	u & = & 0 & \textnormal{on} & \partial \Om \cap \mathring M, \\[1mm]
	\displaystyle  g( \nabla  u ,  \nu) & = & 0 & \textnormal{on} & \partial \Om \cap \partial M
\end{array}
\right.
\end{equation}
is in the space
$H^{3/2+\eps}(\Om)$.
\end{Proposition}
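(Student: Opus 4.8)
The plan is to establish elliptic regularity for the mixed Dirichlet–Neumann problem \eqref{eq:mixed} by a combination of interior/boundary regularity away from the edge $\Gamma$ and a local analysis near $\Gamma$ using the explicit structure of the corner singularities. Since $\Om$ has finite volume and $f\in H^{-1/2+\eps}(\Om)\subset H^{-1}(\Om)$, the variational formulation gives a unique weak solution $u\in\widetilde H_0^1(\Om)$; the point is to upgrade its regularity to $H^{3/2+\eps}$. Away from $\Gamma$, on $\partial\Om\cap\mathring M$ we have a pure Dirichlet condition and on $\partial\Om\cap\partial M$ a pure Neumann condition, each portion being a smooth hypersurface; standard elliptic estimates (localized, and flattening the boundary in the Fermi-type coordinates \eqref{eq:normalcoord}) give $u\in H^{2}_{loc}$, hence $H^{2}$, on any neighborhood staying at positive distance from $\Gamma$. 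So the whole issue is concentrated in a tubular neighborhood of the edge $\Gamma$.

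Near a point $p\in\Gamma$, I would use a partition of unity and, in suitable local coordinates, model the geometry on a dihedral wedge: locally $\Om$ looks like $\{(r,\theta,y): 0<r, \ 0<\theta<\omega(p), \ y\in\R^{n-1}\}$ with the Dirichlet face $\{\theta=\omega(p)\}$ (on $\partial\Om\cap\mathring M$) and the Neumann face $\{\theta=0\}$ (on $\partial M$). The key input is the classical theory of boundary value problems in plane sectors / dihedral domains (Kondrat'ev, Grisvard): for the mixed Dirichlet–Neumann problem on a sector of opening $\omega$, the singular exponents are $\mu_k = \frac{(2k-1)\pi}{2\omega}$, $k\ge 1$, and the solution decomposes as a regular $H^2$ part plus a finite combination of singular functions $r^{\mu_k}$ times smooth functions of $(\theta,y)$, whenever no exponent lies exactly on the critical line. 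Since $\omega$ is continuous on the compact $\Gamma$ and strictly between $0$ and $\pi$, we have $\omega\le \omega_{\max}<\pi$, so the smallest exponent satisfies $\mu_1=\frac{\pi}{2\omega}\ge \frac{\pi}{2\omega_{\max}}>\frac12$. Choosing $\eps>0$ small enough that $\frac{\pi}{2\omega_{\max}}>\frac12+\eps$ and also that $\frac12+\eps$ avoids the (finitely many relevant) exponents, the leading singular function $r^{\mu_1}$ already lies in $H^{3/2+\eps}$ locally, and therefore so does $u$ near $\Gamma$. Combining with the $H^2$ estimate away from $\Gamma$ via the partition of unity yields $u\in H^{3/2+\eps}(\Om)$.

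The main obstacle — and the step requiring the most care — is the passage from the model dihedral wedge to the actual domain: the faces are only smooth (not flat), the metric $g$ is variable, and the edge $\Gamma$ is curved, so one must justify that the Kondrat'ev-type decomposition and the associated a priori estimates survive under $C^\infty$ (in fact it is enough to have $C^{1,\alpha}$, matching the hypotheses of the paper) perturbation of the coefficients and the geometry, uniformly as $p$ ranges over $\Gamma$. This is handled by freezing coefficients at $p$, treating the variable part and the lower-order terms $f$ as right-hand sides in weighted Sobolev (Kondrat'ev) spaces, and using that the regularity threshold $H^{3/2+\eps}$ is strictly below the first nontrivial singular exponent, so a perturbation/bootstrap argument closes without hitting a resonance. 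One also checks that the regularity index $\eps$ can be chosen uniformly in $p\in\Gamma$ by compactness, since $\omega_{\max}<\pi$. With these ingredients assembled, the conclusion $u\in H^{3/2+\eps}(\Om)$ follows.
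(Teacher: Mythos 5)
Your argument is correct and rests on the same key computation as the paper: for the mixed Dirichlet--Neumann problem in a wedge of opening $\omega$ the first singular exponent is $\pi/2\omega$, which stays strictly above $1/2$ (uniformly, since $\omega$ is bounded away from $\pi$), and the Sobolev criterion for $r^\alpha$ then yields $H^{3/2+\eps}$ near the edge, the rest following from standard elliptic regularity away from $\Gamma$. The packaging differs, though. The paper directly invokes the Costabel--Dauge edge-asymptotics results, writing $u=u_{sing}+u_{reg}$ with the explicit form of $u_{sing}$ in the three regimes $\omega<\pi/2$, $\omega>\pi/2$, $\omega\equiv\pi/2$, and it must then separately address the ``crossing singularities'' phenomenon when $\omega(y)$ passes through $\pi/2$ along $\Gamma$ (where an arbitrarily small loss of regularity is accepted). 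Your route via a Kondrat'ev/Grisvard shift theorem in weighted spaces, with frozen coefficients and a perturbation argument, is in principle cleaner for this particular statement: since the target regularity $1/2+\eps$ lies strictly below the first exponent for every $p\in\Gamma$, no singular functions enter at all and the resonance/crossing issue never arises; your explicit use of compactness of $\Gamma$ to get a uniform $\eps$ is also a point the paper leaves implicit. Two caveats: the step you yourself flag as the main obstacle (variable metric, curved faces and edge, uniformity of the weighted a priori estimates in $p$) is exactly what the paper outsources to the cited edge literature, so in your version it would have to be carried out or attributed precisely; and your phrase ``regular $H^{2}$ part'' is slightly off, since with $f$ only in $H^{-1/2+\eps}$ the regular part can only be expected in $H^{3/2+\eps}$ --- harmless here, because that is all the conclusion requires.
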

\begin{Remark}{\rm
It is important for our purpose to work here with Sobolev regularity: if indeed we work with H\"older-regularity, we can only conclude that $u\in C^{0,1/2+\eps}(\overline{\Om})$, which does not suffice to justify the expression of the shape derivative, which uses the trace of the gradient on $\partial\Om$, see Section \ref{ssect:derivative}, while from the fact that $u\in H^{3/2+\eps}(\Om)$, we can deduce that $\nabla u$ has a trace in $L^2(\partial\Om)$ (we use here a trace theorem, valid since under our assumptions, $\Om$ has a Lipschitz boundary).}
\end{Remark}

\begin{proof} 
Let $f\in H^s(\Om)$ where $s\in(-1,0)$. It is well-known from the variational formulation of the problem that there exists a unique $u\in H^1(\Om)$ weak solution of \eqref{eq:mixed}. We wonder for which $s$ we can state that $u\in H^{s+2}(\Om)$.
To that end, we work locally around a point $p\in\Gamma$:
there exist special cylindrical coordinates $(r,\theta,y)$ such that $\Gamma$ correspond to $r=0$, $y\in\Gamma$ parametrizes the edge ($p$ corresponding to $y=0$), and $\Om$ corresponds to $0<\theta<\omega(y)$; since $\Om$ is a boundary edge domain, these coordinates are well-defined and $C^\infty$. From the literature on edge asymptotics, we know that $u$ can be written around $p$ as the sum of a singular function $u_{sing}$ and a remainder term $u_{reg}$ which is more regular that $u_{sing}$; more precisely, it is known (see for example \cite{coda0,coda1,coda2,dauge,dauge2}) that
\begin{eqnarray*}
\textrm{ if }\omega(y)\in (0,\pi/2)&\textrm{then}&u_{sing}(r,\theta,y)=0\;\textrm{ and }u_{reg}\in H^{s+2}(\Om)\\[2mm]
\textrm{ if }\omega(y)\in(\pi/2,\pi)&\textrm{then}&u_{sing}(r,\theta,y)=c(y)\, r^{\pi/2\omega(y)}\,\varphi(\theta,y),\\[2mm]
\textrm{ if }\omega(y)=\pi/2\textrm{ in a neighborhood of }y=0,&\textrm{then}&
u_{sing}(r,\theta,y)=r\left(\sum_{q\geq 1}c_{q}(y)\, \ln^q(r)\, \varphi_{q}(y,\theta)\right),
\end{eqnarray*}
where $c, (c_{q})_{q\in\mathbb{N}}$ (containing only a finite number of non-zero terms) and $\varphi, (\varphi_{q})_{q\in\mathbb{N}}$ are smooth functions (we notice that when $n=1$, the set $\Gamma$ is made of two points, in that case the regularity on $\Gamma$ is an empty condition). Let us conclude in the last two cases. In the second one, we know that
$$\textrm{ if }\alpha>s'-\frac{n+1}{2}, \textrm{ then }r\mapsto r^\alpha\in H^{s'}(\R^{n+1}),$$
and therefore the regularity increases with small angles, and the worst regularity is obtained when the angle is close to $\pi$, but is always strictly better than $H^{3/2}$ which is the limit case when $\omega=\pi$ and $n+1=2$. In the last case, it is clear that $r\ln^q(r)=o(r^{1-\delta})$ for any small $\delta$, so we obtain that the regularity is also better than $H^{3/2}$, therefore there exists $s$ strictly above $-1/2$ such that $u\in H^{s+2}(\Om)$.\\
It remains to understand the case where $\omega(0)=\pi/2$ but $\omega$ is not constant in a neighborhood of $y=0$. In that case, the asymptotic development is more involved (phenomenon of crossing singularities), but it is explained in \cite{coda1,coda2} that up to an arbitrary small loss of regularity, we obtain the same range of regularity as in the case $\omega=\pi/2$, and therefore again $u_{sing}$ is in $H^{3/2+\eps}(\Om)$.
\end{proof}

In the previous proof, we have seen that the regularity is more or less monotone with respect to the contact angle: smaller is the angle, higher is the regularity, and for angles close to $\pi$, the regularity decreases up to the space $H^{3/2}$. However, it is also known that there exists some exceptional angles, for which the regularity is higher than expected (see for example \cite{AK} for a description of this phenomenon for the angle $\pi/4$ in dimension 2). We prove here that the angle $\pi/2$ is such an exceptional angle in our situation. 
More precisely we prove that when the angle is $\pi/2$ everywhere on the interface, the regularity is actually $C^{2,\alpha}$, whereas it was expected to be $C^{0,\alpha}$ for every $\alpha$ in the proof of the previous statement. This will be very useful in the proof of Theorem \ref{maintheorem}. This result is related to the fact that one can use a symmetrization argument to conclude that the first expected term in the asymptotic development of $u$ vanishes.
\begin{Proposition}\label{reg}
Let $\Om$ be a boundary edge domain, such that the angle $\omega$ defined on $\Gamma$ is constant and equal to $\pi/2$. Then
for every $\alpha<1$ and any $f\in C^{0,\alpha}(\overline{\Om})$, the solution $u$ of \eqref{eq:mixed} is in $C^{2,\alpha}(\overline{\Om})$.
\end{Proposition}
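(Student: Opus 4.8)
We outline a possible approach. The plan is to reduce the problem, by standard elliptic theory, to a neighbourhood of the interface $\Gamma$, and there to combine the edge-asymptotic expansion of $u$ with a reflection across $\partial M$; it is in this reflection that the value $\pi/2$ of the contact angle is used in an essential way. Away from $\Gamma$ the statement is classical: since the metric is smooth and $f\in C^{0,\alpha}(\overline{\Om})$, interior Schauder estimates give $u\in C^{2,\alpha}$ in $\mathring{\Om}$, boundary Schauder estimates for the Dirichlet problem give $u\in C^{2,\alpha}$ up to $\partial\Om\cap\mathring{M}$ away from $\Gamma$, and boundary Schauder estimates for the oblique-derivative problem --- the condition $g(\nabla u,\nu)=0$ being a regular oblique-derivative condition with smooth coefficients --- give $u\in C^{2,\alpha}$ up to $\partial\Om\cap\partial M$ away from $\Gamma$. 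It therefore remains to prove that $u\in C^{2,\alpha}$ in a neighbourhood of each point $p\in\Gamma$.

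Fix $p\in\Gamma$ and use the cylindrical coordinates $(r,\theta,y)$ from the proof of Proposition~\ref{prop:H3/2}, in which $\Om$ is $\{0<\theta<\pi/2\}$, the face $\{\theta=0\}$ lies in $\partial M$ (Neumann) and the face $\{\theta=\pi/2\}$ is $\partial\Om\cap\mathring{M}$ (Dirichlet). The model operator at $p$ is the flat Laplacian on the wedge $\{0<\theta<\pi/2\}$ with Neumann data on $\theta=0$ and Dirichlet data on $\theta=\pi/2$; by separation of variables its singular exponents are exactly the positive odd integers $1,3,5,\dots$, with eigenfunctions $r^{2k+1}\cos\big((2k+1)\theta\big)$, which are polynomials, hence smooth. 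One sees this in a ``symmetrization'' fashion by doubling the wedge across the Neumann face $\{\theta=0\}$: one obtains the flat half-plane with a single Dirichlet face, whose local solutions are regular and even in $\theta$, hence are exactly these polynomials. Consequently, by the edge-asymptotic results recalled in the proof of Proposition~\ref{prop:H3/2} (see \cite{coda0,coda1,coda2,dauge,dauge2}), and since the only singular exponent strictly below the target order $2+\alpha$ is $1$, one has, near $p$,
\[
u = u_{reg} + r\Big(\sum_{q\geq 1} c_q(y)\,(\log r)^q\,\varphi_q(\theta,y)\Big),\qquad u_{reg}\in C^{2,\alpha},
\]
with finitely many nonzero, smooth coefficients $c_q$ and smooth $\varphi_q$ (the exponent-$1$ contribution without logarithm being a smooth function, already absorbed into $u_{reg}$). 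The Proposition is thus reduced to showing that $c_q\equiv 0$ for every $q\geq 1$.

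This last point I would obtain by reflecting $u$ across $\partial M$. Near $p$, first apply a smooth diffeomorphism that fixes $\partial M$ as a set and straightens $\partial\Om\cap\mathring{M}$ to a coordinate hyperplane; since the contact angle equals $\pi/2$ --- so that the outward normal $\nu$ of $\partial M$ is tangent to $\partial\Om\cap\mathring{M}$ along $\Gamma$ --- this can be arranged so that, in the new coordinates $(x^0,x^1,x')$, one has $\partial M=\{x^0=0\}$, $\partial\Om\cap\mathring{M}=\{x^1=0\}$, $\Om=\{x^0>0,\,x^1>0\}$, the metric stays smooth, and the Neumann condition becomes $\partial_{x^0}u=0$ on $\{x^0=0\}$. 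Set $\tilde u(x^0,x)=u(|x^0|,x)$, $\tilde f(x^0,x)=f(|x^0|,x)\in C^{0,\alpha}$, and let $\bar g$ be the even reflection of the metric across $\{x^0=0\}$; then $\bar g$ is only Lipschitz (this is all that holds in general, as $\partial M$ need not be totally geodesic), and $\tilde u$ is a weak solution of $-\Delta_{\bar g}\tilde u=\tilde f$ in the half-space $\{x^1>0\}$ near $p$, with $\tilde u=0$ on the now flat face $\{x^1=0\}$: the $\pi/2$ corner has been doubled to a smooth boundary point. The leading coefficients of this equation are continuous and its lower-order coefficients lie in $L^\infty$, so interior and boundary $W^{2,q}$ estimates ($q<\infty$) apply and yield $\tilde u\in C^{1,\beta}$ near $p$ for every $\beta<1$; hence $u\in C^{1,\beta}$ near $p$. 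Comparing with the expansion above, $u_{reg}\in C^{2,\alpha}\subset C^{1}$ while $r(\log r)^q\notin C^{1}$ for $q\geq 1$, which forces all $c_q$ to vanish; thus $u=u_{reg}\in C^{2,\alpha}$ near $p$. Together with the first step this gives $u\in C^{2,\alpha}(\overline{\Om})$. When $n=1$, $\Gamma$ consists of two points and $\Om$ is a planar domain with two right-angled corners, and the same argument applies.

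The main obstacle is the articulation of these two ingredients. The reflection across $\partial M$ is the conceptual core --- it forces the ``first expected'' singular term $r\log r$ to vanish --- but by itself it only yields $C^{1,\beta}$, because the reflected metric is merely Lipschitz; the upgrade to $C^{2,\alpha}$ must be performed back in the original domain, where the metric is smooth, through the edge expansion, and this requires knowing precisely that for the angle $\pi/2$ the model exponents are the positive odd integers, so that an $r\log r$ term is the only possible obstruction to $C^{2,\alpha}$. Justifying that edge expansion with exactly this exponent set in the present variable-coefficient, curved-edge, H\"older setting --- relying on \cite{coda0,coda1,coda2,dauge,dauge2} --- is the technically delicate part.
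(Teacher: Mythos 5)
Your proposal is correct and follows essentially the same route as the paper: the edge asymptotics at angle $\pi/2$ (exponents $1,3,\dots$) reduce the claim to killing the $r\log^q r$ terms, and an even reflection across $\partial M$ in Fermi-type coordinates (where the conormal condition becomes $\partial_{x^0}u=0$, so the reflected solution solves a Dirichlet problem with Lipschitz coefficients in a domain that is smooth across the doubled edge) yields $C^{1,\beta}$ regularity, which forces all logarithmic coefficients to vanish. The only difference is technical and minor: you pass through non-divergence $W^{2,q}$ estimates (which additionally requires justifying $W^{2,2}_{loc}$ regularity of the weak solution, and an odd reflection of the mixed components $g^{0j}$ if your straightening of the Dirichlet face destroys $g^{0j}\equiv 0$), whereas the paper keeps the divergence form and directly invokes the $C^{1,\alpha}$ theory of \cite[Theorem 8.34]{GT}.
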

\begin{proof}
We use the same setting as in the proof of Proposition \ref{prop:H3/2}, but now in the class of H\"older spaces, so we consider $f\in C^{0,\alpha}(\overline{\Om})$. Around $p\in\Gamma$, from \cite{coda1,coda2,dauge,dauge2}, we know that the exponents in the asymptotic development for the mixed boundary problem are $(\pi/2\omega + k\pi/\omega)_{k\in\mathbb{N}}$, so for the angle $\pi/2$ the first terms are $1$ and $3$ and since $r\mapsto r^3\ln^q(r)$ belongs to the space $C^{2,\alpha}(\overline{\Om})$ for every $\alpha$ and any integer $q$, we conclude that
\begin{equation}\label{eq:dvp}
u(r,\theta,y)=r\left(\sum_{q\geq 1}c_{q}(y)\, \ln^q(r)\,\varphi_{q}(y,\theta)\right)+u_{reg}(r,\theta,y),
\end{equation}
for $y$ close to $0$, $r$ small, $\theta\in(0,\pi/2)$ and
where functions $(c_{q},\varphi_{q})$ are smooth and $u_{reg}$ is in $C^{2,\alpha}$ locally around $p$.

\medskip

The result will be proven if we prove that $c_{q}=0$ for $q\geq 1$. To that end, we use a symmetrization procedure through $\partial M$, using around $p\in\Gamma$ the coordinates $(x^{0},x)$ described in \eqref{eq:normalcoord}. 
We define $U=\Psi^{-1}(\Om\cap B^+_{g,r_{0}}(p))\subset B^+_{r_{0}}$, so that $\partial U\cap(\{0\}\times\R^n)=\Psi^{-1}(\partial\Om \cap \partial M\cap \overline{B^+_{g,r_{0}}(p)})$.
With this choice of coordinates, $U$ is again a boundary edge domain whose contact angle is constant and equal to $\pi/2$ on $\gamma=\Psi^{-1}(\Gamma)$.

We now define $W=\{(x^0,x)\;/(|x^0|,x)\in U\}$ and

$$\forall (x^0,x)\in W, \;\;{\mathring{u}}(x^0,x)=\left\{\begin{array}{ll}
u(x^0,x)&\textrm{ if }x^{0}>0\\
u(-x^0,x)&\textrm{ if }x^{0}<0
\end{array}\right.
\;\;\;\;\textrm{ and similarly we define }\mathring{g}\textrm{ and }\mathring{f}.
$$
Since the contact angle is $\pi/2$, the symmetrized domain $W$ is smooth around 0; using that $u$ satisfies a Neumann boundary condition on $\partial\Om\cap \partial M$, we deduce that $\mathring{u}$ satisfies
$$
\left\{
\begin{array}{rcccl}
	-\Delta_{\mathring{g}}\, \mathring{u}& = & \mathring{f} & \textnormal{in} & W \\[1mm]
	\mathring{u} & = & 0 & \textnormal{on} & \partial W\cap B_{r_{0}}.
\end{array}
\right.
$$
and finally, the symmetrized metric $\mathring{g}$ is no longer $C^\infty$ but has Lipschitz coefficients, and $\mathring{f}$ is again in $C^{0,\alpha}(\overline W)$.
Since the Laplace operator can be written in a divergence form 
$$\Delta_{\mathring{g}}{u}=\frac{1}{\sqrt{|\mathring{g}|}}\partial_{i}\left(\sqrt{|\mathring{g}|}\mathring{g}^{ij}\partial_{j}\mathring{u}\right)$$
we can apply the regularity theory for elliptic PDE in divergence form in a smooth set, with Lipschitz coefficients: precisely, from \cite[Theorem 8.34]{GT} we know that $\mathring u\in C^{1,\alpha} \left(\overline{W}\right)$ and therefore $(c_{q})_{q\geq 1}$ must be zero, and finally $u\in C^{2,\alpha}(\overline{\Om})$.
\end{proof}

\subsection{Shape derivative in nonsmooth domains}\label{ssect:derivative}

\begin{Proposition}\label{lambda}
Let $\Om_{0}$ be a connected boundary domain of finite volume. Assume that $({\Omega}_t)_t$ is a deformation of $\Om_{0}$ induced by the vector field $V$, as defined in Definition~\ref{def:1}. Then $t \longmapsto \lambda_t$ is $C^\infty$ around $t =0$. If moreover $\Om_{0}$ is a boundary edge domain such that the contact angle is strictly between $0$ and $\pi$, then $g(\nabla  u_0 ,  \nu_0)\in L^2(\partial\Om_{0})$ and
\begin{equation}\label{eq:lambda'}
\frac{d\lambda_t}{dt} |_{t =0}= - \int_{\partial \Omega_{0} \cap \mathring{M}}  \left( g(\nabla  u_0 ,  \nu_0) \right)^{2}\  g(V, \nu_0) \, \mbox{\rm dvol}_g,
\end{equation}
where $\mbox{\rm dvol}_g$ is the volume element on $\partial \Omega_{0} \cap \mathring{M}$ for the metric induced by $g$ and $\nu_0$ is the normal vector field on $\partial \Omega_0 \cap \mathring{M}$.
\end{Proposition}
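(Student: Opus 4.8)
\noindent\emph{Proof proposal.} The plan is to transport the problem to the fixed domain $\Om_{0}$ by the flow, deduce smoothness of $\lambda_{t}$ from perturbation theory, compute $\lambda_{0}'$ by an envelope argument that reduces it to an interior integral, and finally turn that interior integral into the claimed boundary flux via a divergence identity; the only genuinely delicate point will be to justify the divergence theorem up to the singular edge $\Gamma$, and this is precisely where Proposition~\ref{prop:H3/2} is needed.

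\emph{Step 1 (fixed domain, smoothness).} Let $\Phi_{t}=\xi(t,\cdot)$ be the flow of $V$, a $C^{2}$‑diffeomorphism of $M$ with $\Phi_{0}=\mathrm{id}$ and $\partial_{t}\Phi_{t}|_{t=0}=V$. Since the flow preserves $\partial M$ it maps $\Om_{0}$ onto $\Om_{t}$, the Dirichlet part $\partial\Om_{0}\cap\mathring M$ onto $\partial\Om_{t}\cap\mathring M$, the Neumann part $\partial\Om_{0}\cap\partial M$ onto $\partial\Om_{t}\cap\partial M$, and $\Om_{0}\cup\partial M$ onto $\Om_{t}\cup\partial M$; hence $v\mapsto v\circ\Phi_{t}$ extends to a Banach isomorphism from $\widetilde H^{1}_{0}(\Om_{t})$ onto the $t$‑independent space $\mathcal H:=\widetilde H^{1}_{0}(\Om_{0})$, uniformly for $t$ near $0$. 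Writing $g_{t}:=\Phi_{t}^{*}g$ and changing variables in \eqref{lambda1},
\[
\lambda_{t}=\min_{v\in\mathcal H,\ q_{t}(v)=1}a_{t}(v,v),\qquad a_{t}(v,w)=\int_{\Om_{0}}g_{t}(\nabla v,\nabla w)\,\mathrm{dvol}_{g_{t}},\quad q_{t}(v,w)=\int_{\Om_{0}}vw\,\mathrm{dvol}_{g_{t}}.
\]
In local coordinates $a_{t},q_{t}$ are built from $g_{t}^{ij}\sqrt{|g_{t}|}$ and $\sqrt{|g_{t}|}$, which depend on $t$ with the regularity of the flow. Because $\Om_{0}$ is connected, $\lambda_{0}$ is a simple eigenvalue and the first eigenfunction $u_{0}$ may be taken positive; by the standard perturbation theory for the self‑adjoint generalized eigenvalue problem $a_{t}(u_{t},\cdot)=\lambda_{t}\,q_{t}(u_{t},\cdot)$ on the fixed space $\mathcal H$ (after a harmless renormalization of the $L^{2}$‑inner product this is a single self‑adjoint family of forms with fixed form domain, cf.\ \cite{henrotpierre}), both $\lambda_{t}$ and the normalized first eigenfunction $u_{t}$ ($q_{t}(u_{t})=1$, $u_{t}>0$) depend smoothly on $t$; in particular $t\mapsto\lambda_{t}$ is $C^{\infty}$ near $0$. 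What makes this work — and is less standard than in the interior case — is that the form domain $\mathcal H$ does not move, precisely because the flow preserves $\partial M$. Finally, applying Proposition~\ref{prop:H3/2} with $f=\lambda_{0}u_{0}\in L^{2}(\Om_{0})\subset H^{-1/2+\eps}(\Om_{0})$ gives $u_{0}\in H^{3/2+\eps}(\Om_{0})$; since $\Om_{0}$ is Lipschitz, the trace theorem then yields $\nabla u_{0}\in H^{1/2+\eps}(\Om_{0})$ with trace in $L^{2}(\partial\Om_{0})$, so $g(\nabla u_{0},\nu_{0})\in L^{2}(\partial\Om_{0})$.

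\emph{Step 2 (shape derivative as an interior integral).} I would differentiate $\lambda_{t}=a_{t}(u_{t},u_{t})$ at $t=0$: using the weak eigenvalue equation $a_{0}(\cdot,u_{0})=\lambda_{0}\,q_{0}(\cdot,u_{0})$ on $\mathcal H$ and the normalization $q_{t}(u_{t})\equiv1$ (which gives $q_{0}(\dot u_{0},u_{0})=-\tfrac12\,\dot q(u_{0})$, with dots denoting $\tfrac{d}{dt}|_{0}$ and the argument frozen at $u_{0}$), the material‑derivative term drops and
\[
\lambda_{0}'=\dot a(u_{0},u_{0})-\lambda_{0}\,\dot q(u_{0})=\frac{d}{dt}\Big|_{t=0}\int_{\Om_{0}}\Big(g_{t}(\nabla u_{0},\nabla u_{0})-\lambda_{0}u_{0}^{2}\Big)\,\mathrm{dvol}_{g_{t}}.
\]
This differentiation is harmless because the integrand depends on $t$ only through smooth coefficients paired with the fixed $L^{2}$‑functions $u_{0},\nabla u_{0}$. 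Using $\tfrac{d}{dt}|_{0}g_{t}=\mathcal L_{V}g$, $\tfrac{d}{dt}|_{0}g^{ij}_{t}=-(\mathcal L_{V}g)^{ij}$ and $\tfrac{d}{dt}|_{0}\mathrm{dvol}_{g_{t}}=(\div_{g}V)\,\mathrm{dvol}_{g}$, this becomes
\[
\lambda_{0}'=\int_{\Om_{0}}\Big(-(\mathcal L_{V}g)(\nabla u_{0},\nabla u_{0})+\big(|\nabla u_{0}|^{2}-\lambda_{0}u_{0}^{2}\big)\,\div_{g}V\Big)\,\mathrm{dvol}_{g},
\]
an identity that so far uses no boundary regularity of $u_{0}$, only $u_{0}\in H^{1}(\Om_{0})$.

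\emph{Step 3 (divergence identity and the edge).} Set $X:=(|\nabla u_{0}|^{2}-\lambda_{0}u_{0}^{2})\,V-2\,g(\nabla u_{0},V)\,\nabla u_{0}$. A direct computation — the Hessian terms cancel by symmetry and $(\mathcal L_{V}g)(\nabla u_{0},\nabla u_{0})=2\,g(\nabla_{\nabla u_{0}}V,\nabla u_{0})$ — gives, wherever $u_{0}$ is $C^{2}$,
\[
\div_{g}X=\big(|\nabla u_{0}|^{2}-\lambda_{0}u_{0}^{2}\big)\div_{g}V-(\mathcal L_{V}g)(\nabla u_{0},\nabla u_{0})-2\,g(\nabla u_{0},V)\big(\Delta_{g}u_{0}+\lambda_{0}u_{0}\big),
\]
and the last term vanishes by \eqref{P}, so $\div_{g}X$ is exactly the integrand of the interior formula of Step~2. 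By elliptic regularity up to the boundary away from $\Gamma$ one has $u_{0}\in C^{\infty}(\overline{\Om_{0}}\setminus\Gamma)$, so on $\Om_{0}\setminus\overline{T_{\delta}(\Gamma)}$, with $T_{\delta}(\Gamma)=\{\mathrm{dist}_{g}(\cdot,\Gamma)<\delta\}$, the classical divergence theorem applies and
\[
\int_{\Om_{0}\setminus\overline{T_{\delta}(\Gamma)}}\div_{g}X\,\mathrm{dvol}_{g}=\int_{\partial\Om_{0}\cap\{\mathrm{dist}_{g}(\cdot,\Gamma)>\delta\}}g(X,\nu_{0})+\int_{\partial T_{\delta}(\Gamma)\cap\Om_{0}}g(X,\hat\nu).
\]
As $\delta\to0$ the left side tends to $\lambda_{0}'$ (the integrand is $O(|\nabla u_{0}|^{2}+u_{0}^{2})\in L^{1}$). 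On $\partial\Om_{0}\cap\partial M$ we have $g(V,\nu_{0})=0$ ($V$ is tangent to $\partial M$ while $\nu_{0}=-N$ there) and $g(\nabla u_{0},\nu_{0})=0$ (Neumann condition), so $g(X,\nu_{0})=0$ there; on $\partial\Om_{0}\cap\mathring M$ we have $u_{0}=0$, hence $\nabla u_{0}=g(\nabla u_{0},\nu_{0})\,\nu_{0}$ and $g(X,\nu_{0})=-\big(g(\nabla u_{0},\nu_{0})\big)^{2}g(V,\nu_{0})$; since $|g(X,\nu_{0})|\lesssim|\nabla u_{0}|^{2}+u_{0}^{2}\in L^{1}(\partial\Om_{0})$ by Step~1, dominated convergence shows the first boundary term tends to $-\int_{\partial\Om_{0}\cap\mathring M}\big(g(\nabla u_{0},\nu_{0})\big)^{2}g(V,\nu_{0})\,\mathrm{dvol}_{g}$. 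For the tube term, the coarea formula gives $\int_{0}^{\delta_{0}}\big(\int_{\{\mathrm{dist}_{g}(\cdot,\Gamma)=s\}\cap\Om_{0}}(|\nabla u_{0}|^{2}+u_{0}^{2})\,d\mathcal H^{n}\big)\,ds\lesssim\int_{T_{\delta_{0}}(\Gamma)\cap\Om_{0}}(|\nabla u_{0}|^{2}+u_{0}^{2})\,\mathrm{dvol}_{g}\to0$, so along a sequence $\delta_{k}\to0$ the tube term vanishes; comparing the limits yields exactly \eqref{eq:lambda'}.

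\emph{Where the difficulty lies.} The one nonroutine point is the last step: near $\Gamma$ the vector field $X$ is merely $L^{1}$ (its components are products of $H^{1/2+\eps}$ functions with Lipschitz functions), so the divergence theorem cannot be quoted directly. What rescues the argument is precisely the Sobolev gain $u_{0}\in H^{3/2+\eps}(\Om_{0})$ of Proposition~\ref{prop:H3/2}, which furnishes both $\nabla u_{0}\in L^{2}(\Om_{0})$ (used to kill the tubular contributions) and an $L^{2}$‑trace of $\nabla u_{0}$ on $\partial\Om_{0}$ (used to pass to the limit in the boundary integral); the mere Hölder regularity $u_{0}\in C^{0,1/2+\eps}(\overline{\Om_{0}})$ coming from the same edge analysis would not control the trace of $\nabla u_{0}$, as the remark following Proposition~\ref{prop:H3/2} already points out.
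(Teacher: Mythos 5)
Your route is genuinely different from the paper's: the paper differentiates the transported eigenfunction, identifies the equation and boundary conditions satisfied by $u'$, and integrates by parts via a generalized Green formula in the spaces $H^{s}(\Delta_{g},\Om)$, whereas you use the distributed (volume) form of Hadamard's formula and then convert it into a boundary flux through the Rellich--Pohozaev field $X$. Steps 1 and 2 are correct, as are the identity for $\div_{g}X$ and the values of $g(X,\nu_{0})$ on the Dirichlet and Neumann parts of $\partial\Om_{0}$; this approach has the merit of never needing the boundary conditions of $u'$. There is, however, one genuine gap, in the step that kills the tube term.

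Writing $h(s)=\int_{\{\mathrm{dist}_{g}(\cdot,\Gamma)=s\}\cap\Om_{0}}(|\nabla u_{0}|^{2}+u_{0}^{2})\,d\mathcal H^{n}$, you argue that $\int_{0}^{\delta_{0}}h(s)\,ds\lesssim\int_{T_{\delta_{0}}(\Gamma)\cap\Om_{0}}(|\nabla u_{0}|^{2}+u_{0}^{2})\to 0$ and conclude that $h(\delta_{k})\to 0$ along a sequence. That inference is false: integrability of $h$ near $0$ gives no control on $\liminf_{s\to 0}h(s)$ (for instance $h(s)=s^{-1/2}$ is integrable on $(0,1)$ and tends to $+\infty$, and $h\equiv 1$ is integrable with $\int_{0}^{\delta_{0}}h\to0$ yet never small). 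So $\nabla u_{0}\in L^{2}(\Om_{0})$ alone does not dispose of the tube contribution, and this is exactly the place where the mere Sobolev gain of Proposition~\ref{prop:H3/2} is insufficient. The gap is fixable with information already established in the proof of that proposition: the edge expansion gives, near $\Gamma$, $|\nabla u_{0}|=\mathcal O(r^{-\beta})$ with $\beta=\max\bigl(0,\,1-\pi/(2\omega)\bigr)<1/2$ when $\omega\neq\pi/2$ (and only logarithmic losses when $\omega=\pi/2$), while $\mathcal H^{n}\bigl(\{\mathrm{dist}_{g}(\cdot,\Gamma)=s\}\cap\Om_{0}\bigr)=\mathcal O(s)$ because $\Gamma$ is a compact smooth $(n-1)$-submanifold; hence $h(s)=\mathcal O(s^{1-2\beta})\to 0$ and the tube term vanishes as $\delta\to 0$, with no need to pass to a subsequence. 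With that pointwise bound inserted in place of the coarea argument, your proof is complete.
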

Before proving this result, we give some remarks and consequences.
The differentiability of some similar shape functional for mixed boundary value problem is studied in \cite[Section 3.9]{sokozolesio} in the case of a smooth domain, which corresponds to the case of a angle constant and equal to $\pi$. In that case formula \eqref{eq:lambda'} is not valid since the eigenfunction $u$ is not smooth enough. Also in \cite{bochniaksandig}, the case of angles different from $\pi$ is considered, but for a different shape functional, and restricted to the two-dimensional case. 

\medskip

Proposition \ref{lambda} allows us to characterize extremal domains for the first eigenvalue of the Laplace-Beltrami operator under 0 mixed boundary conditions, and state the problem of finding extremal domains into the solvability of an over-determined elliptic problem. As a consequence of the previous result, we obtain indeed:
\begin{Corollary}\label{coro:carac+angle}
Let  $\Omega_0$ be a boundary edge domain. Then  $\Om_{0}$ is extremal if and only if  the first eigenfunction $u_0$ of $\Om_{0}$ satisfies
\begin{equation}\label{eq:EL}
g (\nabla  u_0 , \nu_0)  =  \textrm{constant} \;\; \textnormal{on}\;\;  \partial \Omega_0 \cap  \mathring{M}
\end{equation}
where $\nu_0$ is the outward normal vector field on $\partial \Omega_0\cap \mathring{M}$.
In that case, $\partial\Omega_0\cap\mathring{M}$ necessarily meets $\partial M$ orthogonally, that is to say the contact angle function $\omega$ is equal to $\pi/2$ on $\Gamma$.
\end{Corollary}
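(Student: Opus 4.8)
The plan is to derive \eqref{eq:EL} from the shape derivative formula \eqref{eq:lambda'} of Proposition \ref{lambda}, and then to use the non-locality of this condition to force the contact angle to be $\pi/2$. First I would observe that $\Om_0$ is extremal if and only if the right-hand side of \eqref{eq:lambda'} vanishes for every admissible $V$; by Definition \ref{def:defo} such $V$ is tangent to $\partial M$ along $\partial M$, and by Definition \ref{def:extremal} one restricts to volume-preserving deformations, which (by differentiating the volume constraint) amounts to imposing $\int_{\partial\Om_0\cap\mathring M} g(V,\nu_0)\,\mbox{dvol}_g=0$ (the part of $\partial\Om_0$ lying in $\partial M$ contributes nothing since $V$ is tangent to $\partial M$ there). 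Writing $\phi:=g(V,\nu_0)$ restricted to $\partial\Om_0\cap\mathring M$, extremality says $\int_{\partial\Om_0\cap\mathring M}\big(g(\nabla u_0,\nu_0)\big)^2\phi\,\mbox{dvol}_g=0$ for all $\phi$ in the space of admissible normal speeds with $\int\phi=0$. The key technical point is that this space of admissible $\phi$ is dense in $\{\phi\in L^2(\partial\Om_0\cap\mathring M):\int\phi=0\}$: given any smooth $\phi$ on $\partial\Om_0\cap\mathring M$ with zero mean, one extends it to a vector field $V$ on $M$ that is tangent to $\partial M$ (possible because near $\Gamma$ the boundary $\partial\Om_0\cap\mathring M$ meets $\partial M$ transversally, so the normal direction to $\partial\Om_0\cap\mathring M$ has a nonzero component tangent to $\partial M$), and then the volume-preserving condition is a single linear constraint that is already satisfied. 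By the fundamental lemma of the calculus of variations, $\big(g(\nabla u_0,\nu_0)\big)^2$ is constant on $\partial\Om_0\cap\mathring M$; since $u_0$ is the first eigenfunction it is positive inside $\Om_0$ and vanishes on $\partial\Om_0\cap\mathring M$, so Hopf's lemma gives $g(\nabla u_0,\nu_0)<0$ there (with $\nu_0$ outward), hence $g(\nabla u_0,\nu_0)$ itself is a (negative) constant, which is \eqref{eq:EL}.

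Conversely, if $u_0$ satisfies \eqref{eq:EL} then $\big(g(\nabla u_0,\nu_0)\big)^2$ is constant and can be pulled out of the integral in \eqref{eq:lambda'}, leaving a multiple of $\int_{\partial\Om_0\cap\mathring M}g(V,\nu_0)\,\mbox{dvol}_g$, which vanishes for every volume-preserving deformation; hence $\Om_0$ is extremal.

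For the orthogonality claim, the point is that \eqref{eq:EL} is an \emph{overdetermined} condition: $u_0$ already solves the mixed problem \eqref{P}, with a Neumann condition on $\partial\Om_0\cap\partial M$ and a Dirichlet condition on $\partial\Om_0\cap\mathring M$, and now in addition $g(\nabla u_0,\nu_0)$ is constant on the Dirichlet part. I would argue locally near a point $p\in\Gamma$ using the cylindrical coordinates $(r,\theta,y)$ from the proof of Proposition \ref{prop:H3/2}, in which $\Om_0$ is $\{0<\theta<\omega(y)\}$, the Dirichlet face is $\{\theta=\omega(y)\}$ and the Neumann face is $\{\theta=0\}$. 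The edge asymptotics recalled there show that the leading singular exponent is $\pi/(2\omega(y))$, so that near $\Gamma$ one has $u_0\sim c(y)\,r^{\pi/(2\omega(y))}\varphi(\theta,y)$ and hence the normal derivative $g(\nabla u_0,\nu_0)$ on the Dirichlet face behaves like $r^{\pi/(2\omega(y))-1}$; for this trace to be a nonzero constant (which it is, by the first part and Hopf) we need the exponent to equal $1$, i.e. $\omega(y)=\pi/2$. The main obstacle — and the place requiring the most care — is precisely making this local argument rigorous: one must rule out the exceptional/degenerate behavior (e.g. the case $c(y)\equiv 0$, where the leading term vanishes and one passes to the next exponent $\pi/(2\omega)+\pi/\omega$, which only makes the discrepancy with the required exponent $1$ worse unless $\omega=\pi/2$), handle the logarithmic terms arising when $\omega\equiv\pi/2$, and invoke the known regularity theory for mixed boundary problems on edge domains (the references \cite{coda1,coda2,dauge,dauge2}) to justify that the displayed asymptotic expansion is valid and can be differentiated. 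Once $\omega\equiv\pi/2$ on $\Gamma$ is established, Proposition \ref{reg} retroactively upgrades the regularity of $u_0$ to $C^{2,\alpha}$ and, with $M$ analytic, the overdetermined condition together with standard analytic-hypoellipticity/free-boundary arguments yields analyticity of $\partial\Om_0\cap\mathring M$.
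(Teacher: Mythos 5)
Your argument follows the same route as the paper: the equivalence comes from Proposition \ref{lambda} together with the fundamental lemma of the calculus of variations (the paper is terser, but your use of Hopf's lemma to pass from constancy of $\left(g(\nabla u_0,\nu_0)\right)^2$ to constancy of $g(\nabla u_0,\nu_0)$ is exactly the point the paper handles via the maximum principle), and the orthogonality claim is obtained, as in the paper, from the edge asymptotics of \cite{coda1,coda2,dauge,dauge2} showing that for $\omega\neq\pi/2$ the trace of the normal derivative either vanishes or blows up at $\Gamma$, contradicting its being a nonzero constant. So the proposal is correct and essentially coincides with the paper's proof; the closing remark on analyticity is extraneous to the statement but harmless.
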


\noindent{\bf Proof of Corollary \ref{coro:carac+angle}:}
Let $\Omega_0$ be a boundary extremal domain for the first eigenvalue of the Laplace-Beltrami operator, with 0 Dirichlet boundary condition on $\partial \Omega_0 \cap \mathring{M}$ and 0 Neumann boundary condition on $\partial \Omega_0 \cap \partial M$. Using Proposition \ref{lambda}, we obtain 
\[
\int_{\partial \Omega_{0} \cap \mathring{M}} \left(g(\nabla  u_0 , \nu_0)  \right)^{2}\  g(V, \nu_0) \ \textnormal{dvol}_g = 0
\]
for all field $V$ preserving the volume of the domain, i.e. such that 
\begin{equation}\label{xixi}
\int_{\partial \Omega_{0} \cap \mathring{M}}  g(V, \nu_0) \ \textnormal{dvol}_g = 0.
\end{equation}
This means that $g(\nabla  u_0 , \nu_0)$ is constant. On the other hand, if $g(\nabla  u_0 , \nu_0)$ is constant, by the previous proposition we have that $\Omega_0$ is extremal, because $V$ satisfy \eqref{xixi}.

\medskip

It remains to investigate the angle between $\partial\Om_{0}\cap \mathring{M}$ and $\partial\Om_{0}\cap\partial M$, when \eqref{eq:EL} is satisfied. Let's assume that $y\mapsto\omega(y)$ is not constantly equal to $\pi/2$; then there exists a  neighborhood in $\mathcal{Y}\subset\Gamma=\overline{\partial\Om\cap \mathring{M}}\cap\partial M$ where $\omega$ is different from $\pi/2$. We work locally around a point $y_{0}\in\mathcal{Y}$.
We need now a more explicit version of the asymptotic development written in the proof of Proposition \ref{prop:H3/2}. To that end, we use the results of \cite{CoDaEdge, CoDaEdge2,coda2} which asserts that since the principal part of our operator is the Euclidian Laplacian, we have, up to a smooth change of coordinates, that
$u_{0}(r,\theta,y)$ can be written $u_{reg}(r,\theta,y)+u_{sing}(r,\theta,y)$ with:
\begin{multline*}
\textrm{ if }\omega(y)\in (0,\pi/2)\textrm{ in }\mathcal{Y}\textrm{, then }\;u_{sing}=0\\
\textrm{ and }u_{reg}\in H^{s+2}(\Om)\textrm{ is flat at order 2, which means }\; u_{reg}=\mathcal{O}(r^2)\textrm{ and }\nabla u_{reg}=\mathcal{O}(r),
\end{multline*}
\vspace{-0.9cm}\begin{multline*}
\textrm{ if }\omega(y)\in(\pi/2,\pi)\textrm{ in }\mathcal{Y}\textrm{, then }\;u_{sing}(r,\theta,y)=c(y)r^{\pi/2\omega(y)}\cos\left(\frac{\pi}{2\omega(y)}\theta\right),\\
\textrm{ and }u_{reg}\textrm{ is more flat than  }u_{sing}\textrm{, meaning }\; u_{reg}=o(r)\textrm{ and }\nabla u_{reg}=o(1),
\end{multline*}
(note that here, with the terminology of \cite{coda1,coda2}, there is no crossing singularities, since $\omega(y)\neq \pi/2$ on $\mathcal{Y}$ and we are only interested in the first term of the asymptotic). Therefore
in the first case $g(\nabla  u_0 , \nu_0)=\mathcal{O}(r)$ and in the second case $g(\nabla  u_0 , \nu_0)$ behaves like $-\frac{\pi}{2\omega(y)}c(y)r^{\pi/2\omega(y)-1}\sin\left(\frac{\pi}{2\omega(y)}\theta\right)$,
 and therefore, in both cases, cannot be a nonzero constant on $\partial\Om\cap \mathring{M}=\{\theta=\omega(y)\}$. This is a contradiction (remind that from maximum principle, the constant $g(\nabla  u_0 , \nu_0)$ cannot be a zero), and one concludes that $\omega(y)=\pi/2$ everywhere on $\Gamma$.\qed
\medskip
\noindent{\bf Proof of Proposition \ref{lambda}:} 
Let $\Om_{0}$ be a boundary domain, connected and of finite volume.
We denote by $\xi_{t}=\xi(t,\cdot)$ the flow associated to $V$, $\nu_t$ the outward unit normal vector field to $\partial \Omega_{t}$. We first remind that, since $\Om_{t}$ is connected, for $t$ small enough $\lambda_{t}$ the first eigenvalue of $\Om_{t}$ with mixed boundary condition is simple, so one can define $t\mapsto u_t  \in \widetilde{H}^1_{0}(\Omega_t)$ the one-parameter family of first eigenfunctions of the Laplace-Beltrami operator, normalized to be positive and have $L^2(\Omega_t)$-norm equal to $1$.
As usual in the computation of a shape derivative, we consider $\widehat{u_{t}}=u_{t}\circ \xi(t,\cdot)\in \widetilde{H}^1_{0}(\Om_{0})$.

\medskip

{\bf Step 1: $\exists\; t_{0}>0$ such that $t\in(-t_{0},t_{0})\mapsto (\widehat{u_{t}},\lambda_{t})\in \widetilde H_0^1(\Omega_{0})\times\R$ is $C^\infty$.} \\
The variational formulation of the equation satisfied by $u_{t}$ is:
$$
\int_{\Om_{t}}g(\nabla u_{t},\nabla \varphi)=\lambda_{t}\int_{\Om_{t}}u_{t}\varphi\, , \;\;\;\forall \varphi\in \widetilde{H}^1_{0}(\Om_{t}).$$
We are going to transport that formulation on the fixed domain $\Om_{0}$, in order to obtain the variational formulation satisfied by $\widehat{u_{t}}\in \widetilde H_0^1(\Omega)$. To that aim, we use the following equality, which relies on the fact that $$\xi_{t}(\partial\Om_{0}\cap\partial M)=\partial\Om_{t}\cap\partial M$$ and is a consequence of the hypothesis $\xi_{t}(\partial M)\subset \partial M$:
$$\widetilde{H}^1_{0}(\Om_{0})=\{\varphi\circ\xi_{t}, \;\varphi\in\widetilde{H}^1_{0}(\Om_{t})\}.$$
With this equality and a change of variable (see for example \cite{henrotpierre} for details), we obtain:
$$
\int_{\Om_{0}}g(A(t)\, \nabla \widehat{u_{t}}\,,\,\nabla \varphi)=\lambda_{t}\int_{\Om_{0}}\widehat{u_{t}}\,\varphi \, J_{t}\,\,\,, \;\;\;\forall \varphi\in \widetilde{H}^1_{0}(\Om_{0}),$$
where 
$$J_{t}=\det(D\xi_{t}), \;\;\textrm{ and }A(t):=J_{t}\, D\xi_{t}^{-1}(D\xi_{t}^{-1})^T.$$
We then define
$$\begin{array}{cccl}
G:&(-t_{0},t_{0})\times \widetilde{H}^1_{0}(\Om_{0})\times \R&\longrightarrow&\widetilde{H}^1_{0}(\Om_{0})'\times\R\\[2mm]
&(t,v,\mu)&\longmapsto&\displaystyle{\left(-{\div_{g}(A(t)\nabla v)}-\mu v J_{t}\, ,\, \int_{\Om_{0}}v^2J_{t}-1\right)}
\end{array}$$
where $\widetilde{H}^1_{0}(\Om_{0})'$ is the dual space of $\widetilde{H}^1_{0}(\Om_{0})$, 
and $-\div_{g}(A(t)\nabla v)$ has to be understood in the weak sense:
$$\langle-\div_{g}(A(t)\nabla v),\varphi\rangle_{\widetilde{H}^1_{0}(\Om_{0})'\times \widetilde{H}^1_{0}(\Om_{0})}= \int_{\Om_{0}}g(A(t)\nabla v,\nabla \varphi).$$
It is easy to check that $G$ is $C^\infty$, see again \cite{henrotpierre} for more details. In order to apply the implicit function theorem for the equation $G(t,\widehat{u_{t}},\lambda_{t})=0$, we focus on the differential of $G$ at $(0,u_{0},\lambda_{0})$ with respect to the couple $(v,\mu)$: 
$$\partial_{(v,\mu)}G(0,u_{0},\lambda_{0})(w,\nu)=\left(-\Delta_{g} w-\nu u_{0}-\lambda_{0}w\,,\,2\int_{\Om_{0}}u_{0}w\right), \;\;\forall (w,\nu)\in \widetilde{H}^1_{0}(\Om_{t})\times \R.$$
Because of the Banach isomorphism Theorem, in order to prove to prove that such differential is an isomorphism, it is enough to prove that given $(f,\Lambda)\in \widetilde{H}^1_{0}(\Om_{0})'\times\R$, the equation
$$\left(-\Delta_{g} w-\nu u_{0}-\lambda_{0}w,2\int_{\Om_{0}}u_{0}w\right)=(f,\Lambda)$$
admits a unique solution $(w,\nu)\in \widetilde{H}^1_{0}(\Om_{0})\times\R$.
The operator $-\Delta_{g}-\lambda_{0}\mathbbm{1}$ has a one-dimensional kernel, spanned by $u_{0}$. Therefore $f+\nu u_{0}$ is in the range of $-\Delta_{g}-\lambda_{0}\mathbbm{1}$ if and only if it is orthogonal to $u_{0}$ (in the sense of the duality $\widetilde{H}^1_{0}(\Om_{0})'\times \widetilde{H}^1_{0}(\Om_{0})$). This leads to the unique value $\nu=-\langle f,u_{0}\rangle$. \\
Moreover, one knows that the solutions $w$ of $\left(-\Delta_{g}-\lambda_{0}\mathbbm{1}\right)w=f+\nu u_{0}$ form a one-dimensional affine space $v_{0}+\textrm{Span}(u_{0})$, so $w=v_{0}+\alpha u_{0}$ for some $\alpha\in\R$. The equation $2\int_{\Om_{0}}u_{0}w=\Lambda$ uniquely determines $\alpha$ and so $w$. We can conclude that $\partial_{(v,\mu)}F(0,u_{0},\lambda_{0})$ is an isomorphism, and therefore $t\mapsto (\widehat{u_{t}},\lambda_{t})$ is $C^\infty$.

\medskip

Now and for the rest of the proof, $\Om_{0}$ is assumed to be a boundary edge domain whose contact angle is always strictly  between 0 and $\pi$.

\medskip





{\bf Step 2: Generalized Green formula:} we prove in this step that given $\eps\in (0,1/2)$ and $\Om$ a Lipschitz domain, denoting $H^s(\Delta_{g},\Om):=\left\{\varphi\in H^s(\Om),\;\Delta_{g} \varphi \in L^2(\Om)\right\}$ for $s\in (1/2,3/2)$ we have:
\begin{multline}\forall u\in H^{3/2-\eps}(\Delta_{g},\Om), \forall v\in H^{1/2+\eps}(\Delta_{g},\Om), \\\int_{\Om} \left(v\Delta_{g} u-u\Delta_{g} v\right)=\langle g(\nabla u,\nu_{0}),v\rangle_{H^{-\eps}(\partial\Om)\times H^\eps(\partial\Om)}-\langle u,g(\nabla v,\nu_{0})\rangle_{H^{1-\eps}(\partial\Om)\times H^{-1+\eps}(\partial\Om)}
\end{multline}
When $u,v$ are smooth, this equality is just the classical Green formula. The above generalization is easily obtained by a density argument, using the following result from \cite[Lemma 2 and 3]{costabel}: 
\begin{multline}
H^{3/2-\eps}(\Delta_{g},\Om)=\{\varphi\in H^{1}(\Om),\;\Delta_{g}\varphi\in L^2(\Om)\textrm{ and }\varphi_{|\partial\Om}\in H^{1-\eps}(\Om)\},\\
\textrm{ and }H^{1/2+\eps}(\Delta_{g},\Om)=\{\varphi\in H^{\eps}(\Om),\;\Delta_{g}\varphi\in L^2(\Om)\textrm{ and }g(\nabla\varphi,\nu_{0})_{|\partial\Om}\in H^{-1+\eps}(\Om)\}
\end{multline}
and that $C^\infty(\overline{\Om})$ is dense in $H^{3/2-\eps}(\Delta_{g},\Om)$.

\medskip

{\bf Step 3: Computation of $\frac{d}{dt}u_{t}$:}
From $u_{t}=\widehat{u_{t}}\circ\xi_{t}^{-1}$, we obtain that $u'=\frac{d}{dt}_{|t=0} u_{t}$ is well-defined in ${\Om_0}$ and that
\begin{equation}\label{eq:u'}
u'=\widehat{u}'-g(\nabla u,V),
\end{equation}
where $\widehat{u}'=\frac{d}{dt}_{|t=0} \widehat{u}_{t}\in \widetilde{H}^1_{0}(\Om_{0})$, well-defined from Step 1.
Using that $u\in H^{3/2+\eps}(\Om_{0})$ and that $\widehat{u}'\in H^{1}(\Om_{0})$, we know from \eqref{eq:u'} that
$u'\in H^{1/2+\eps}(\Om_{0})$.
We also know that, the domain $\Om_{0}$ being piecewise $C^\infty$, the functions $u$ and $u'$ are locally $C^\infty$ on $\overline{\Om_{0}}\setminus\Gamma$. With these regularities, we can compute the equation and the boundary conditions satisfied by $u'$:
first, we differentiate with respect to $t$ the identity 
\begin{equation}\label{eq:1-3}
\Delta_g \, u_t + \lambda_t \, u_t  =0.
\end{equation}
and evaluate the result at $t = 0$ to obtain
\begin{equation}\label{eq:1-4}
\Delta_{g} u'_0 + \lambda_0 \,  u'_0  = -  \lambda'_{0}  \, u_0 \, \textrm{, in }\Omega_{0}.
\end{equation}

Moreover, using again \eqref{eq:u'}, we obtain that
$$u'=-g(\nabla u, V)\textrm{ on }\partial\Om\cap\mathring M.$$
and since $u_0 = 0$ on $\partial \Omega_0 \cap \mathring{M}$, only the normal component of  $V$ plays a r\^ole in the previous formula. Therefore, we have, again since $\xi(t,\partial\Om_{0}\cap\partial M)=\partial\Om_{t}\cap\partial M$:
\begin{equation}\label{eq:1-2}
u' = -  \,  g(\nabla  u_0 ,  \nu_0) \, g(V, \nu_0), \;\;\textrm{ on }\partial \Omega_0 \cap \mathring{M}
\end{equation}

About the Neumann part of the boundary, we have:
$$\textrm{for all }p \in \partial \Omega_0 \cap \partial M, \;\;g(\nabla u_t (\xi (t , p)), \nu_{t}) = 0.
$$
Since $V$ is tangential on $\partial M$, using the normal geodesic coordinates we have $\nu_{t} = -\partial_{x^0}$ on $\partial \Omega_t \cap \partial M$, and in particular  it does not depend on $t$ and
\begin{equation}\label{aa}
g(\nabla u_t (\xi (t , p)), \nu_{t})=-\partial_{x^0}u_t (\xi (t , p))=0.
\end{equation}
So, differentiating (\ref{aa}) with respect to $t$ and evaluating the result at $t = 0$ we obtain  
\begin{equation}\label{bb}
0 = - \partial_{x^0} \partial_t u_0 - g (\nabla \partial_{x^0} u_{0}, V)= - \partial_{x^0} \partial_t u_0 = g (\nabla \partial_t u_0, \nu_{0})  \, 
\end{equation}
on $\partial \Omega_0 \cap \partial M$, where we used the facts that $\partial_{x^0} u_{0} = 0$ on $\partial \Omega_0 \cap \partial M$ and that $g (V, \nu_{0} )= 0$ in $\partial \Omega_0 \cap \partial M$.

\medskip

{\bf Step 5: Computation of $\frac{d}{dt}_{|t=0}\lambda_{t}$:}
From \eqref{eq:1-4}, multiplying by $u$ and integrating over $\Om$, we obtain, using the generalized Green formula together with the regularity we have proven on $u$ and $u'$:
$$\lambda'_{0}=\int_{\Om}(-\Delta_{g} u'-\lambda_{0}u')u=\int_{\Om}(-\Delta_{g} u-\lambda u)u'+\langle u',g(\nabla u,\nu_{0})\rangle_{H^{-\eps}(\partial\Om)\times H^\eps(\partial\Om)}-\langle u,g(\nabla u',\nu_{0})\rangle_{H^{1-\eps}(\partial\Om)\times H^{-1+\eps}(\partial\Om)}.$$
Since $u=0$ on $\partial\Om\cap \mathring{M}$ and $g(\nabla u',\nu_{0})=0$ on $\partial\Om\cap\partial M$, we have $\langle u,g(\nabla u',\nu_{0})\rangle_{H^{1-\eps}(\partial\Om)\times H^{-1+\eps}(\partial\Om)}=0$.
Finally, since $u$ and $u'$ are smooth enough so that $\left(g(\nabla u,\nu_{0})_{|\partial\Om}, u'_{|\partial\Om}\right)\in L^2(\partial\Om)$, we can write 
$$\langle u',g(\nabla u,\nu_{0})\rangle_{H^{-\eps}(\partial\Om)\times H^\eps(\partial\Om)}=\int_{\partial\Om}u'g(\nabla u,\nu_{0})=-\int_{\partial\Om\cap \mathring{M}}(g(\nabla u,\nu_{0}))^2 g(V, \nu),$$
and we finally obtain
$$\lambda'=-\int_{\partial\Om\cap \mathring{M}}(g(\nabla u,\nu_{0}))^2 g(V, \nu).$$
\qed

\subsection{Extremal domains versus the isoperimetric problem}\label{ssect:isop}

As we said, extremal domains are the critical points of the functional 
\[
\Omega \to \lambda_{\Omega}
\]
under a volume constraint ${\rm Vol}_g \, \Omega = \kappa$. The problem of finding extremal domains for the first eigenvalue of the Laplace-Beltrami operator is considered, by the mathematical community, very close to the isoperimetric problem. 

\medskip

Given a compact Riemannian manifold $M$ and a positive number $\kappa<\mbox{Vol}_g(M)$, where $\mbox{Vol}_g(M)$ denotes the volume of the manifold $M$, the isoperimetric problem consists in studying, among the compact hypersurfaces $\Sigma \subset M$ enclosing a region $\Omega$ of volume $\kappa$, those which minimize the area functional
\[
\Omega \to \mbox{Vol}_{g} \, (\partial \Omega \cap \mathring M)
\]
(note that we do not take in account the area of $\partial \Omega$ coming from the boundary of $M$). The solutions of the isoperimetric problem are (where they are smooth enough) constant mean curvature hypersurfaces and intersect the boundary of the manifold orthogonally (see for example \cite{iso-ros}). In fact, constant mean curvature hypersurfaces intersecting $\partial M$ orthogonally are exactly the critical points of the area functional 
\[
\Omega \to \mbox{Vol}_{g} \, (\partial \Omega \cap \mathring M)
\]
under a volume constraint ${\rm Vol}_g \, \Omega = \kappa$. 

\medskip
In the case of a manifod $M$ without boundary, it is well known that the determination of the isoperimetric profile
\[
I_\kappa : = \inf_{\Omega \subset M \, : \, {\rm Vol}_g \, \Omega = \kappa} \mbox{Vol}_{g} \, \partial \Omega 
\] 
is related to the Faber-Kr\"ahn profile, where one looks for the least value of the first eigenvalue of the Laplace-Beltrami operator amongst domains with prescribed volume
\[
FK_\kappa : = \inf_{\Omega \subset M \, : \, {\rm Vol}_g \, \Omega = \kappa } \lambda_{\Omega} 
\]
(see \cite{chavel}). For this reason it is natural to expect that the solutions to the isoperimetric problem for small volumes are close in some sense to the solutions of the Faber-Kr\"ahn minimization problem. And such closeness can be expected also for the corresponding critical points.

\medskip

The results known up to now about extremal domains underline such expectations. In the case of a manifold without boundary, the constructions of extremal domains in \cite{pacsic, del-sic} are the parallel of the constructions of constant mean curvature topological spheres in a Riemannian manifold $M$ done in \cite{Ye, Pac-Xu}. And in the case of a manifold with boundary, our construction is the parallel of the constructions of constant mean curvature topological half-spheres in a Riemannian manifold $M$ done in \cite{fall} for dimension $3$.

\medskip

Nevertheless, Proposition \ref{lambda} and Corollary \ref{coro:carac+angle} show a very interesting difference between extremal domains and critical points of the area functional, based on the following:

\begin{Remark}{\rm
A significant fact contained in the statement of Proposition \ref{lambda} is that the shape derivative for the first eigenvalue of the Laplace-Beltrami operator with mixed boundary condition in the boundary edge domain $\Om_{0}$ does not contain a singular term supported by the ``corner part'' of the boundary $\partial\Om_0$, as it is the case for the area functional, see \eqref{eq:perimeter}.}
\end{Remark}
In order to understand the consequences of this remark, let's compare the Euler-Lagrange equations of the two problems: criticality for $\lambda_{1}$ is written
\begin{equation}\label{eula1}
\frac{d\lambda_{t}}{dt} |_{t =0}=\int_{\partial \Omega_{0} \cap \mathring{M}} \left(g(\nabla  u_0 , \nu_0)  \right)^{2}\  g(V, \nu_0) \ \textnormal{dvol}_g = 0
\end{equation}
whereas for the area functional we have
\begin{equation}\label{eq:perimeter}
\frac{d}{dt}\mbox{Vol}_{g} \, (\partial \Omega_t \cap \mathring M){|_{t =0}} = \int_{\partial \Omega_0 \cap \mathring M} \mbox{H}_{0} \, g(V,\nu_{0})  +  \int_{\Gamma} g(V, \tau_{0}) = 0 \,,
\end{equation}
where $(\Omega_t)_{t}$ is a volume preserving deformation of $\Omega_0$ given by the vector field $V$, $\mbox{H}_{0}$ is the mean curvature of $\partial \Omega_0\cap \mathring{M}$, $\nu_{0}$ is the normal vector on $\partial \Omega_0\cap \mathring{M}$, and $\tau_{0}$  is the normal vector to $\Gamma$ tangent to {$\partial \Om_{0}\cap\mathring{M}$}. For the area functional, the consequence of \eqref{eq:perimeter} is {that in order to be critical $\Omega_0$ must satisfy}, denoting $\nu_{1}$ the normal vector to $\Gamma$ tangent to $\partial M$:
$$\mbox{H}_{0} \equiv \textrm{constant, \Big[ and }{g( \tau_{0},\nu_{1})=0}\textrm{ or equivalently }\omega\equiv\pi/2\textrm{ on }\Gamma\Big],$$
the first condition being obtained with vector fields $V$ supported in $\mathring{M}$ whereas the second condition is obtained thanks to vector fields $V$ that are supported in a neighborhood of $\Gamma$. For $\lambda_{1}$, only using vector fields $V$ that are supported in $\mathring{M}$ we obtain as a consequence of \eqref{eula1} that in order to be critical $\Omega_0$ must satisfy:
\begin{equation}\label{eq:eula1}
g(\nabla  u_0 , \nu_0)=\textrm{constant}\;\;\;\textrm{ on }\partial\Om_{0}\cap\mathring{M}.
\end{equation}
The fact that the contact angle is $\pi/2$ on $\Gamma$ is already contained in the above equation (see Corollary \ref{coro:carac+angle}), and therefore domains that are critical domains for $\lambda_{1}$ in the sense of Definition \ref{def:extremal} (i.e. for any vector field $V$ tangent on $\partial M$) are the same as critical domains for $\lambda_{1}$ restricted to vector fields supported in $\mathring{M}$, which is not the case for the area functional.

\medskip

In other words, one can easily build surfaces that have a constant mean curvature but intersects the boundary $\partial M$ with an angle different from $\pi/2$ (and therefore are not extremal sets for the relative perimeter under volume constraint), whereas every set satisfying \eqref{eq:eula1} intersects the boundary $\partial M$ with angle equal to $\pi/2$.

\medskip


These properties lie on the fact that the operator given by the mean curvature is local while the Dirichlet to Neumann operator is nonlocal.



\section{Analysis of the problem}\label{sect:reformulation}

\subsection{Notations and formulation of the problem}

\noindent{\bf Euclidean notations}. We define the following notations:
\[
\R^{n+1}_{+}=\{x = (x^0,x') = (x^0, x^1, \ldots, x^n)\in \R^{n+1} : x^0 > 0\}
\]
will be the upper Euclidean half-space,
$${B}_1^{+}={B}_{1}\cap \R^{n+1}_{+}$$ will be the upper Euclidean unit half-ball and $$S^{n}_{+}=\{x\in S^{n} : x^0 > 0\}$$ will be the upper Euclidean unit hemisphere. Given a continuous function $f : \overline{S^{n}_+} \longmapsto (0, \infty)$, we also denote 
\[
B_{f}^+ : =  \left\{  x \in \mathbb R^{n+1}_+  \quad :  \quad 0 < |x|   < f (x/|x|) \right\} \, . 
\]

\medskip 

\noindent{\bf Riemannian notations in $(M,g)$}. Let $p$ a point of $\partial M$. We denote by $E_1,...,E_n$ the orthonormal base of $T_p \, \partial M$ associated to the geodesic normal coordinates $x^1,...,x^n$ in $\partial M$ around $p$. If the point $q \in \partial M$ has coordinates $x' \in \mathbb{R}^n$, we set
\begin{equation}\label{TTtheta}
\Theta(x') : = \sum_{i=1}^n x^i \, E_i \in T_p\, \partial M \, .
\end{equation}
The point $q \in \partial M$ whose geodesic coordinates are given by $x'$ is
\[
q = \Phi(x') = \mbox{Exp}^{\partial M}_p (\Theta (x')) \,.
\]
Given a continuous function $f : \overline{S^{n}_{+}} \longmapsto (0, \infty)$ whose $L^\infty$ norm is small (say less than the cut locus of $p$)  we define 
\[
B^+_{g,f} (p) : =  \left\{ \mbox{Exp}^M_{\Phi(x')} (x^0 N(\Phi(x'))) \qquad : \quad  x \in \mathbb R^{n+1}_{+} \qquad 0 < |x|   < f (x/|x|) \right\} \, . 
\]
The subscript $g$ is meant to remind the reader that this definition depends on the metric.

\medskip

\noindent{\bf Formulation of the problem}. Our aim is to show that, for all  $\eps >0$ small enough, we can find a point $p_\eps\in \partial M$ and a (smooth) function $v = v(p_\eps, \eps) : \overline{S^{n}_+} \longrightarrow \mathbb R$ with 0 Neumann condition at the boundary of $S^{n}_{+}$ such that 
\begin{equation}\label{a1mesure}
{\rm Vol} \, B^{+}_{g,\eps(1+v)}(p) =  \eps^n \, {\rm Vol} \, B_1^{+}
\end{equation}
and the over-determined elliptic problem
\begin{equation}\label{a1}
\left\{
\begin{array}{rcccl}
	\Delta_{g} \, \phi + \lambda \, \phi & = & 0 & \textnormal{in} & B^{+}_{g,\eps(1+v)}(p) \\[1mm]
	\phi & = & 0 & \textnormal{on} & \partial B^{+}_{g,\eps(1+v)}(p) \cap \mathring M \\[1mm]
	\displaystyle  g( \nabla  \phi ,  \nu) & = & 0 & \textnormal{on} & \partial B^{+}_{g,\eps(1+v)}(p) \cap \partial M \\[1mm]
	\displaystyle  g( \nabla  \phi ,  \nu) & = & {\rm constant} & \textnormal{on} & \partial B^{+}_{g,\eps(1+v)}(p) \cap \mathring M
\end{array}
\right.
\end{equation}
has a nontrivial positive solution, where $\nu$ is the normal vector on $\partial B^{+}_{g,\eps(1+v)}(p)$. Notice that the 0 Neumann boundary condition on $v$ is justified by Corollary \ref{coro:carac+angle}. Indeed, the half ball $B^{+}_{g,\eps}(p)$ intersects $\partial M$ orthogonally, and then, since an extremal domain also intersects $\partial M$ orthogonally, the deformation $v$ should satisfy a fortiori a 0 Neumann boundary condition.
\medskip

\subsection{Dilation of the metric}

We follow the strategy of \cite{pacsic}, paying attention to the fact that we are working in a more general situation because our domains are boundary edge domains. Our first aim is to give a sense to the problem when $\eps =0$. Observe that, considering the dilated metric $\bar g : =  \eps^{-2} \, g$, Problem \eqref{a1mesure}-\eqref{a1} is equivalent to finding a point $p\in \partial M$ and a function $v : \overline{S^{n}_+} \longrightarrow \mathbb R$ with 0 Neumann condition at the boundary of $S^{n}_{+}$ such that 
\begin{equation}\label{b1mesure}
{\rm Vol} \, B^{+}_{\bar{g},1+v}(p) =  {\rm Vol} \, B_{1}^+
\end{equation}
and for which the over-determined elliptic problem
\begin{equation}\label{b1}
\left\{
\begin{array}{rclll}
	\Delta_{\bar g} \, \bar \phi + \bar \lambda \, \bar \phi & = & 0 & \textnormal{in} & B^{+}_{\bar{g},1+v}(p) \\[1mm]
	\bar \phi & = & 0 & \textnormal{on} & \partial B^{+}_{\bar{g},1+v}(p) \cap \mathring{M} \\[1mm]
		\displaystyle  \bar g( \nabla  \bar \phi , \bar \nu ) & = & 0 & \textnormal{on} & \partial B^{+}_{\bar{g},1+v}(p) \cap \partial M\\[1mm]
	\displaystyle  \bar g( \nabla  \bar \phi , \bar \nu ) & = & {\rm constant} & \textnormal{on} & \partial B^{+}_{\bar{g},1+v}(p) \cap \mathring{M}
\end{array}
\right.
\end{equation}
has a nontrivial positive solution, where $\bar \nu$ is the normal vector on $\partial B^{+}_{\bar{g},1+v}(p)$.  The relation between the solutions of the two problems is simply given by 
\[
\phi = \eps^{- n/2} \, \bar \phi\;\;\;\;\;
\textnormal{and} 
\;\;\;\;\;
\lambda = \eps^{-2}Ê\, \bar \lambda \, .
\]
Let us define the coordinates $y = (y^0,y') = (y^0, y^1, ..., y^n) \in B^+_1$ by
\[
\bar \Psi ( y) : =\mbox{Exp}_{\bar \Phi (y')}^M \left( \eps \, y^0 \, \bar N(\bar \Phi(y'))\right)
\]
where
\[
\bar \Phi ( y') : =\mbox{Exp}_p^{\partial M} \left( \eps \, \sum_{i=1}^n y^i \, E_i \right)
\]
for $p \in \partial M$, and $\bar N$ is the unit normal vector about $\partial M$ for the metric $\bar g$ pointing into $M$.
Using Proposition \ref{fermi-exp} of the Appendix, in the new coordinates $y$ the metric $\bar g$ can be written as
\begin{equation}\label{p000}
\begin{array}{rcl}
\bar g_{00} & = & 1\\[3mm]
\bar g_{0j} & = & 0\\[3mm]
\bar g_{ij} & = & \displaystyle \delta_{ij}\, +\, 2\, \eps\, g(\nabla_{E_{i}}N,E_{j})\, y^0\, +\, \eps^2\, R_{0i0j} \, (y^{0})^2\,  +\,  \eps^2\, g(\nabla_{E_{i}}N,\nabla_{E_{j}}N)\, (y^0)^2 \\[3mm]
& & +\, 2\, \eps^2\, \sum_{k} R_{k0ij} \, y^{k} \, y^{0}\, +\, \frac{1}{3}\,\eps^2 \, \sum_{k,\ell} \tilde R_{ikjl} \, y^{k} \, y^{\ell}\, +\, \mathcal{O}(\eps^3)
\end{array}
\end{equation}
for $i,j,k,l = 1,...n$, where $R$ and $\tilde R$ are respectively the curvature tensors of $M$ and $\partial M$, and
\begin{eqnarray*}
R_{0i0j} & = & g\big( R(N, E_{i}) \, N ,E_{j}\big)\\
R_{k0ij} & = & g\big( R(E_{k}, N) \, E_i ,E_{j}\big)\\
\tilde R_{ijkl} & = & \tilde g\big( \tilde R(E_{i}, E_{k}) \, E_{j} ,E_{\ell}\big).
\end{eqnarray*}
In the coordinates $y$ and the metric $\bar g$, the problem can be continuously extended for $\eps =0$ and in this case it becomes
\begin{equation}\label{formula-000}
\left\{
\begin{array}{rcccl}
	\Delta \, \bar \phi + \bar \lambda \, \bar \phi & = & 0 & \textnormal{in} & B_{1+v}^+ \\[1mm]
	\bar \phi & = & 0 & \textnormal{on} & \partial B_{1+v}^+ \cap \mathbb R^{n+1}_+\\[1mm]
	\displaystyle  \langle \nabla  \bar \phi , \bar \nu \rangle & = & 0 & \textnormal{on} & \partial B_{1+v}^+ \cap \partial \mathbb R^{n+1}_{+}
\end{array}
\right.
\end{equation}
where $\Delta$ denotes the usual Laplacian in $\mathbb R^{n+1}$ and $\langle \cdot, \cdot\rangle$ the usual scalar product in $\mathbb R^{n+1}$, with the normalization 
\begin{equation}
\int_{B_{1+v}^+} \bar \phi^2 \, =1
\label{noral-000}
\end{equation}
and the volume constraint
\[
{\rm Vol} (B_{1+v}^+) = {\rm Vol}(B_1^+ ).
\]
In particular, when $v=0$ we have 
\begin{equation}
\left\{
\begin{array}{rcccl}
	\Delta \phi_{1} + \lambda_{1}\, \phi_{1} & = & 0 & \textnormal{in} & B_{1}^+ \\[1mm]
	\phi_{1} & = & 0 & \textnormal{on} & \partial B_{1}^+ \cap \mathbb R^{n+1}_{+}\\[1mm]
	\displaystyle \langle \nabla \phi_1 , \nu \rangle & = & 0 & \textnormal{on} & \partial B_{1}^+ \cap \partial \mathbb R^{n+1}_{+}
\end{array}
\right.
\label{eq:11-11}
\end{equation}
where $\lambda_{1}$ is the first eigenvalue of the unit Euclidean ball and $\phi_1$ is the restriction to $B_{1}^+$ of the solution to 
\[
\left\{
\begin{array}{rcccl}
	\Delta \tilde \phi_{1} + \lambda_{1}\, \tilde \phi_{1} & = & 0 & \textnormal{in} & B_{1} \\[1mm]
	\tilde \phi_{1} & = & 0 & \textnormal{on} & \partial B_{1}
\end{array}.
\right.
\]
chosen in order to be positive and have $L^2 (B_{1})$ norm equal to $2$.

\subsection{Volume constraint and differentiability with respect to $(\eps,\bar v)$}\label{section3}

In this section, we deal with the volume condition (which leads to replace the variable $v$ by $\bar v$ subject to the condition of having a zero mean), and prove the differentiability of $(\bar\lambda,\bar\phi)$ with respect to $(\eps,\bar v)$. The result is similar to Proposition 3.2 in \cite{pacsic}, and we use the same strategy, though we have to pay attention to the singularities at the boundary of our domain.
Let us define the space 
\[
C^{2,\alpha}_{m,NC} (\overline{S^n_+}) := \left\{v\in C^{2,\alpha} (\overline{S^n_+}) , \int_{S^{n}_+} \bar v \, =0 \, , \, \partial_N v = 0\, \,\textnormal{on}\, \,\partial S^n_+\right\}\,,
\]
where $\partial_N v = 0$ denotes the 0 Neumann condition at the boundary of $S^n_+$.
\begin{Proposition}
\label{pr:1.2}
Given a point $p \in \partial M$, there exists $\eps_{0} >0$, locally uniform in $p$, such that for all $\eps \in (0, \eps_0)$ and all function $\bar v \in C^{2,\alpha}_{m,NC}  (\overline{S^n_+})  $
such that $\|\bar v\|_{C^{2, \alpha} (\overline{S^{n}_+})}  \leq \eps_0$,
there exists a unique positive function $\bar \phi = \bar \phi (p,\eps, \bar v) \in  C^{2, \alpha} (\overline{B^{+}_{\bar{g},1+v}(p)})$, a constant $\bar \lambda = \bar \lambda (p,\eps, \bar v) \in \mathbb R$ and a constant $v_0 = v_0 (p,\eps, \bar v) \in \mathbb R$ such that
\begin{equation}\label{c1mesure}
{\rm Vol}_{\bar g} (B^{+}_{\bar{g},1+v}(p) ) = {\rm Vol} (B_{1}^+ )
\end{equation}
where $v : =  v_0 + \bar v$ and $\bar \phi$ is a  solution to the problem
\begin{equation}\label{formula}
\left\{
\begin{array}{rcccl}
	\Delta_{\bar g} \, \bar \phi + \bar \lambda \, \bar \phi & = & 0 & \textnormal{in} & B^{+}_{\bar{g},1+v}(p) \\[1mm]
	\bar \phi & = & 0 & \textnormal{on} & \partial B^{+}_{\bar{g},1+v}(p) \cap \mathring M \\[1mm]
		\displaystyle  \bar g( \nabla  \bar \phi , \bar \nu ) & = & 0 & \textnormal{on} & \partial B^{+}_{\bar{g},1+v}(p) \cap \partial M
\end{array}
\right.
\end{equation}
which is normalized by 
\begin{equation}\label{noral}
\int_{B^{+}_{\bar{g},1+v}(p)} \bar \phi^2 \, {\rm dvol}_{\bar g} =1 \,.
\end{equation}
In addition $\bar \phi$, $\bar \lambda$ and $v_0$ depend smoothly on the function $\bar v$ and the parameter $\eps$, can be extended smoothly to $\eps =0$ by \eqref{formula-000}, and in particular $(\bar \phi, \bar \lambda,v_{0}) = (\phi_1,\lambda_1,0)$  when $(\eps,\bar v)=(0,0)$.
\end{Proposition}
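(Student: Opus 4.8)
The plan is to set up an implicit function theorem argument on a fixed reference domain, following the strategy of \cite[Proposition 3.2]{pacsic}, but taking care of the edge singularities at $\partial S^n_+$. First I would pull everything back from the varying domain $B^+_{\bar g,1+v}(p)$ to the fixed half-ball $B_1^+$ via the natural radial diffeomorphism $Y_{v}: B_1^+\to B^+_{\bar g,1+v}(p)$, $Y_v(y)=\bar\Psi\big((1+v(y/|y|))\,y\big)$, and similarly for the metric $\bar g$. Under this change of variables, the mixed problem \eqref{formula} together with the normalization \eqref{noral} and the volume constraint \eqref{c1mesure} becomes a system of equations for the triple $(\bar\phi,\bar\lambda,v_0)$ on $B_1^+$, whose coefficients depend smoothly on $(\eps,\bar v,v_0)$. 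The point of introducing the scalar $v_0$ and writing $v=v_0+\bar v$ with $\bar v$ of zero mean is exactly to absorb the volume constraint: the map $v_0\mapsto {\rm Vol}_{\bar g}(B^+_{\bar g,1+v_0+\bar v}(p))$ is smooth with derivative at $(0,0,0)$ equal to ${\rm Vol}(S^n_+)\neq 0$, so one extra real unknown is matched by the one extra real equation coming from the volume.

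Next I would define the relevant nonlinear map. Let $X$ be the product of the Banach space $C^{2,\alpha}_{D,NC}(\overline{B_1^+})$ of $C^{2,\alpha}$ functions vanishing on $\partial B_1^+\cap\mathbb R^{n+1}_+$ and satisfying the $0$-Neumann condition on $\partial B_1^+\cap\partial\mathbb R^{n+1}_+$, with $\mathbb R$ (for $\bar\lambda$) and $\mathbb R$ (for $v_0$); the target is $C^{0,\alpha}(\overline{B_1^+})\times\mathbb R\times\mathbb R$. Write
\[
\mathcal N(\eps,\bar v; \bar\phi,\bar\lambda,v_0)=\Big(\Delta_{(Y_v)^*\bar g}\bar\phi+\bar\lambda\,\bar\phi\ ,\ \textstyle\int \bar\phi^2\,{\rm dvol}_{(Y_v)^*\bar g}-1\ ,\ {\rm Vol}_{\bar g}(B^+_{\bar g,1+v}(p))-{\rm Vol}(B_1^+)\Big).
\]
At $(\eps,\bar v)=(0,0)$ and $(\bar\phi,\bar\lambda,v_0)=(\phi_1,\lambda_1,0)$ the map $\mathcal N$ vanishes by \eqref{eq:11-11}. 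The key computation is that the partial differential of $\mathcal N$ with respect to $(\bar\phi,\bar\lambda,v_0)$ at this point is an isomorphism of Banach spaces. This differential sends $(\psi,\mu,t)$ to $\big(\Delta\psi+\lambda_1\psi+\mu\phi_1,\ 2\int_{B_1^+}\phi_1\psi,\ t\,{\rm Vol}(S^n_+)\big)$ (the $v_0$-direction decouples into the last slot and only enters the volume, which is why the $t$-component is triangular). Invertibility then reduces to the classical fact that, since $\lambda_1$ is a simple eigenvalue of the mixed Dirichlet/Neumann problem on $B_1^+$ with eigenfunction $\phi_1$, for any $(f,a)$ there is a unique $(\psi,\mu)$ with $\psi$ in the mixed-boundary $C^{2,\alpha}$ space solving $\Delta\psi+\lambda_1\psi+\mu\phi_1=f$, $2\int\phi_1\psi=a$: one takes $\mu$ so that $f+\mu\phi_1\perp\phi_1$, solves for $\psi$ modulo ${\rm Span}(\phi_1)$ by Fredholm alternative, and fixes the $\phi_1$-component of $\psi$ using the normalization equation. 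Here one must invoke the Schauder-type regularity for the mixed problem on the half-ball — which has contact angle exactly $\pi/2$ — namely Proposition \ref{reg}, to know that the solution operator indeed maps $C^{0,\alpha}(\overline{B_1^+})$ into $C^{2,\alpha}(\overline{B_1^+})$ rather than merely into a space with edge singularities. This is precisely the place where the exceptional nature of the $\pi/2$ angle is used, and it is the main obstacle compared to the boundaryless case of \cite{pacsic}.

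Finally I would apply the implicit function theorem in Banach spaces to get, for $\eps$ small and $\|\bar v\|_{C^{2,\alpha}}$ small (with the smallness threshold locally uniform in $p$, since all the geometric quantities in \eqref{p000} depend smoothly and locally uniformly on $p$), a unique triple $(\bar\phi(p,\eps,\bar v),\bar\lambda(p,\eps,\bar v),v_0(p,\eps,\bar v))$ near $(\phi_1,\lambda_1,0)$ solving $\mathcal N=0$, depending smoothly on all parameters. Transporting back by $Y_v$ gives the asserted $\bar\phi\in C^{2,\alpha}(\overline{B^+_{\bar g,1+v}(p)})$, and the $\eps=0$ specialization is \eqref{formula-000}. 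Positivity of $\bar\phi$ for $(\eps,\bar v)$ small follows by continuity from positivity of $\phi_1$ (e.g. it stays bounded away from $0$ on compact subsets and has nonvanishing normal derivative on the Dirichlet part), and simplicity/first-eigenvalue status is inherited likewise, so that $\bar\lambda$ is genuinely the first eigenvalue. The only subtlety beyond bookkeeping is checking that the pulled-back operator $\Delta_{(Y_v)^*\bar g}$ and the pulled-back volume form depend smoothly on $(\eps,\bar v)$ as maps into the relevant operator/function spaces — routine given that $Y_v$ depends smoothly on $v$ and the metric \eqref{p000} depends smoothly on $\eps$ — and that Proposition \ref{reg} applies uniformly on the family of nearly-half-ball domains, which holds because after the diffeomorphism they are all literally $B_1^+$ with a metric close to Euclidean and contact angle exactly $\pi/2$.
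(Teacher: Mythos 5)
Your proposal follows essentially the same route as the paper: pull back to the fixed half-ball $B_1^+$, set up an implicit function theorem with the extra scalar unknown $v_0$ matched against the volume constraint, and invoke Proposition \ref{reg} (the exceptional regularity at contact angle $\pi/2$) to invert the linearization in H\"older spaces; treating $\bar\lambda$ as an explicit unknown instead of projecting off $\phi_1$ via the auxiliary constant $\mu$, as the paper does, is an equivalent bookkeeping choice. The one technical point you gloss over is that the radial map $y\mapsto(1+v(y/|y|))\,y$ is not $C^{2,\alpha}$ up to the origin (its differential involves $\nabla v(y/|y|)$, which is homogeneous of degree $0$ and hence discontinuous at $y=0$ unless $v$ is constant), so the pulled-back metric coefficients would fail to be $C^{1,\alpha}(\overline{B_1^+})$ there; the paper repairs exactly this by inserting a cutoff $\chi(y)$ in front of $\bar v(y/|y|)$ in the parameterization, so that near the origin the map reduces to the pure dilation by $1+v_0$.
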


\begin{proof} The proof of this result is similar to the proof of Proposition 3.2 in \cite{pacsic}, basically based on the implicit function Theorem. Therefore we only describe the differences from \cite{pacsic}, which are the choice of coordinates and the regularity theory for the Laplace-Beltrami operator in domains with singularities.

\medskip

For the choice of coordinates we use the following coordinates: given $(v_{0},\bar v)\in\R\times C^{2,\alpha}_{m,NC}  (\overline{S^n_+}) $ and $v=v_{0}+\bar v$, we consider the parameterization of $B^{+}_{\bar{g},1+v}(p) = B^+_{g,\eps (1+v)} (p)$ given by
\[
\hat \Psi ( y) : =\mbox{Exp}^{M}_{\hat \Phi(y')}\left( \left(1 + v_0 + \chi ( y) \, \bar v \left(\frac{y}{|y|} \right)  \right) \, y^0\, N(\hat\Phi(y')) \right)
\]
where
\[
\hat \Phi (y') = \mbox{Exp}_p^{\partial M} \left( \left(1 + v_0 + \chi ( y) \, \bar v \left(\frac{y}{|y|} \right)  \right) \, \sum_{i=1}^n y^i \, E_i \right).
\]
Here $y = (y^0,y') \in B_1^+$, $\chi$ is a cutoff function identically equal to $0$ when $|y| \leq 1/2$ and identically equal to $1$ when $|y|Ê\geq 3/4$, introduced to avoid the singularity at the origin of the polar coordinates. In these coordinates the metric 
\begin{equation}\label{metrichat}
\hat g : = \hat \Psi^* \bar g
\end{equation}
can be written as 
\[
\hat g  = (1+ v_0)^2 \, \sum_{i,j} (\delta_{ij} + C^{ij}) \, dy_i \, dy_j  \, ,
\]
where the coefficients $C^{ij}=C^{ij}_{\eps,v} \in {C}^{1, \alpha} (\overline{B_1^+})$ are functions of $y$ depending on $\eps$, $v =v_0+\bar v$ and the first partial derivatives of $v$. It is important here to notice that 
\[
(\eps, v_0, \bar v) \longmapsto C^{ij}_{\eps,v}\in {C}^{1, \alpha} (\overline{B_1^+})
\]
are smooth maps, as in \cite{pacsic}.\\ 
Now for all $\psi \in C^{2, \alpha}Ê(B_1^+)$ such that 
\[
\int_{B_1^+} \psi \, \phi_1 \, =0
\]
we define
\[
N (\eps, \bar v , \psi , v_0) : = \left( \Delta \psi +  \lambda_1 \, \psi  +  (\Delta_{\hat g} - \Delta + \mu )\, (\phi_1 + \psi)  \, , \,   {\rm Vol}_{\hat g}Ê(B_1^+) - {\rm Vol}\, (B_1^+)\right)
\]
where $\mu$ is given by 
\[
\mu  = -  \int_{B_1^+} \phi_1 \,   (\Delta_{\hat {g}} - \Delta ) \, (\phi_1  + \psi) \,,
\]
so that the first entry of $N$ is $L^2(B_1^+)$-orthogonal to $\phi_1$ (for the Euclidean metric). Thanks to the choice of coordinates, the mapping $N$ is a smooth map from a neighborhood of $(0,0,0,0)$ in  $[0, \infty) \times {C}_{m,NC}^{2, \alpha} (\overline{S^{n}_+}) \times {C}^{2, \alpha}_{\perp \,, \, 0} (\overline{B_1^+}) \times \mathbb R$ into a neighborhood of $(0,0)$ in  ${C}_\perp^{0, \alpha} (B_1^+) \times \mathbb R$. Here the subscript $\perp$ indicates that the functions in the corresponding space are $L^2(B_1^+)$-orthogonal to $\phi_1$ and the subscript $0$ indicates that the functions satisfy the mixed condition at the boundary of $B_1^+$. The differential of $N$ with respect to $(\psi,v_0)$, computed at  $(0,0,0,0)$, given by 
\[
\partial_{(\psi,v_0)} N (0,0,0,0) = \left( \Delta+ \lambda_1 \, , \, n \, {\rm Vol}(B_1^+)\right) \,
\]
 is invertible from ${C}_{\perp, 0}^{2, \alpha} (\overline{B_1^+}) \times \mathbb R$ into ${C}_\perp^{0, \alpha} (\overline{B_1^+}) \times \mathbb R$, by Proposition \ref{reg}. Then the implicit function theorem applies as in \cite{pacsic} and completes the proof of the result. 
\end{proof}

\subsection{Strategy for the proof of Theorem \ref{maintheorem}}

We define the operator
\[
F (p, \eps, \bar v) =  \displaystyle \bar g (\nabla \bar \phi ,  \bar \nu )  \, |_{\partial B^{+}_{\bar{g},1+v}(p) \cap \mathring M}   - \frac{1}{{\rm Vol}_{\bar g} \big(\partial B^{+}_{\bar{g},1+v}(p) \cap \mathring M\big)} \, \int_{\partial B^{+}_{\bar{g},1+v}(p) \cap \mathring M}Ê \, \bar g (\nabla \bar \phi ,  \bar \nu ) \, \mbox{dvol}_{\bar g} \, ,
\]
where $\bar \nu$ denotes the unit normal vector field to $\partial B^{+}_{\bar{g},1+v}(p) \cap \mathring M$, {$(\bar \phi, v_0)$} is the solution of \eqref{c1mesure}-(\ref{formula})-\eqref{noral}. Recall that $v = v_0 + \bar v$. The operator $F$ is locally well defined in a neighborhood of $(p,0,0)$ in  $\partial M \times [0,\infty) \times C^{2,\alpha}_{m,NC}(S^n_+)$, and after canonical identification of $\partial B^{+}_{\bar{g},1+v}(p) \cap \mathring M$ with $S^{n}_+$ we can consider that it takes its values in $C^{1,\alpha}(S^n_+)$. Moreover, it is easy to see that the zero mean condition is preserved, and then we will write that $F$ takes its values in $C^{1,\alpha}_{m}(S^n_+)$.\\
{Our aim is to find $(p,\eps,\bar v)$ such that $F(p,\eps,\bar v)=0$. Observe that, with this condition, $\bar \phi=\bar \phi(\eps,\bar v)$ will be the solution to the problem (\ref{b1}).}

\medskip

Following the proof of the previous result, we have the alternative expression for $F$:
\[
F (p, \eps, \bar v) =  \displaystyle \hat g (\nabla \hat \phi ,  \hat \nu )  \, |_{\partial B_{1}^+ \cap \mathbb R^{n+1}_+}   -\frac{1}{ {\rm Vol}_{\hat g} (\partial B_{1}^+ \cap \mathbb R^{n+1}_+) } \,  \int_{\partial B_{1}^+ \cap \mathbb R^{n+1}_+}Ê \, \hat g (\nabla \hat \phi ,  \hat \nu ) \, \mbox{dvol}_{\hat g} \, ,
\]
where this time $\hat \nu$ is the  the unit normal vector field to $\partial B_1^+$ using the metric $\hat g$ defined by \eqref{metrichat}.

\medskip 

Our aim is to solve the equation
\[
F(p,\epsilon,\bar v) = 0\,
\]
for some $(p,\epsilon, \bar v)$. The first question we should consider is the following: if we fix a point $p \in \partial M$, can we find for all $\eps$ small enough a function $\bar v = \bar v (\eps)$ in order that
\[
F(p,\epsilon,\bar  v (\eps)) = 0\, ?
\]
The answer will be negative, because we will see that the kernel $K$ of 
\[
\partial_{\bar v}F(p,0,0) : C^{2, \alpha}_{m,NC} (\overline{S^n_+} ) \to C^{1, \alpha}_{m} (\overline{S^n_+} )
\] 
is nontrivial. Nevertheless, we will obtain a characterization of $K$ proving that it is given by the space of linear functions (restraint to the half-sphere) depending only on the coordinates $y^1, ..., y^n$, i.e. functions
\[
\begin{array}{ccc}
\overline{S^n_+} & \to & \mathbb{R}\\
y &\to &\langle a, y \rangle
\end{array}
\]
for some $a = (a^0,a) \in \R^{n+1}$ with $a^0 =0$. Moreover we will prove that $\partial_{\bar v}F(p,0,0)$ is an isomorphism from $K^\bot$ to the image of $\partial_{\bar v}F(p,0,0) $, and then the implicit function theorem will give the following result: for all $\eps$ small enough there exist an element $k(\eps) \in K$ and a function $\bar v(\eps)$ such that
\[
F(p,\epsilon,\bar  v (\eps)) = k(\eps) \,.
\]
Clearly, since we fixed the point $p$, the function $\bar v$ and the element $k$ depend also on $p$, and in fact  we have to write
\[
F(p,\epsilon,\bar  v(p,\eps) ) = k(p,\eps) \,.
\]
In the last section we will show that it is possible to apply the implicit function theorem to the equation 
\[
k(p,\eps) = 0
\]
obtaining that: for all $\eps$ small enough, there exists a point $p_\eps$ such that 
\[
k(p_\eps, \eps) = 0\,.
\]
and this will complete the proof of the result.

\section{Solving the problem}

\subsection{Computation of the linearization of $F$ with respect to $\bar v$ at $(p,\eps,\bar v) = (p, 0,0)$}\label{ssect:linearization}

In Section \ref{section3} we established the existence of a unique positive function $\bar \phi \in {C}^{2, \alpha} \left(B_{1+v}^+\right)$ (close to $\phi_1$), a constant $\bar \lambda \in \mathbb R$ (close to $\lambda_1$) and a constant $v_0 \in \mathbb R$ (close to $0$), solutions to \eqref{c1mesure}-\eqref{formula}-\eqref{noral}. Recall that  $\lambda_{1}$ is the first eigenvalue of $- \Delta$ in the half ball $B_{1}^+$ with $0$ mixed boundary condition and $\phi_{1}$ is the associated eigenfunction which is normalized to be positive and have $L^2 (B_{1}^+)$  norm equal to $1$. 
\medskip
For all $ \bar v \in C^{2, \alpha}_{m,NC} (\overline{S^{n}_+})$ let $\psi$ be the (unique) solution of 
\begin{equation}\label{c00}
\left\{
\begin{array}{rcll}
	\displaystyle \Delta \psi + \lambda_{1}Ê\,Ê \psi & = & 0  & \textnormal{in} \qquad B_1^+ \\[1mm]
	\psi & = &  - \displaystyle  {\partial_r \phi_{1}} \, \bar v  & \textnormal{on}\qquad \partial B_1^+ \cap \mathbb R^{n+1}_+ \\[1mm]
	\displaystyle  \langle \nabla  \psi , \nu \rangle & = & 0 & \textnormal{on}\qquad \partial B_1^+ \cap \partial \mathbb R^{n+1}_+
\end{array}
\right.
\end{equation}
which is $L^2(B_1^+)$-orthogonal to $\phi_1$. We define
\begin{equation}
L_{0}(\bar v) : = \left( {\partial_r \psi }  + {\partial^2_r \phi_{1}} \, \bar v \right) \, |_{\partial B_1^+ \cap \mathbb R^{n+1}_+}
\label{Hache}
\end{equation}
Clearly we have
\[
L_{0} : C^{2, \alpha}_{m,NC} (\overline{S^{n}_+}) \to C^{1, \alpha}_{m} (\overline{S^{n}_+}) \,.
\]

\begin{Proposition}\label{calcul-deriv}
The linearization the operator $F$ with respect to $\bar v$ computed at $(p,0,0)$, i.e. 
\[
\partial_{\bar v}F(p,0,0)\, ,
\]
is equal to $L_{0}$.
\end{Proposition}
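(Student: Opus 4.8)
The plan is to compute $\partial_{\bar v}F(p,0,0)$ directly by differentiating, at $\eps=0$ and along the path $t\mapsto t\bar v$, the two ingredients that make up $F$: the normal derivative $\bar g(\nabla\bar\phi,\bar\nu)$ restricted to the spherical part of the boundary, and the averaging term. Since we are at $\eps=0$, the metric is Euclidean and the reference domain is the fixed half-ball $B_1^+$, so everything reduces to a computation in $\R^{n+1}_+$; the perturbation of the domain is encoded by the function $\bar v$ via the parametrization $\hat\Psi$ from Section \ref{section3}, and by Proposition \ref{pr:1.2} the maps $(\eps,\bar v)\mapsto(\bar\phi,\bar\lambda,v_0)$ are smooth and reduce at $(0,0)$ to $(\phi_1,\lambda_1,0)$.

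First I would introduce $\dot\phi := \partial_{\bar v}\bar\phi(p,0,0)[\bar v]$ (the shape derivative of the eigenfunction in the direction $\bar v$), together with $\dot\lambda$ and $\dot v_0$. Differentiating the eigenvalue equation $\Delta\bar\phi+\bar\lambda\bar\phi=0$ and the normalization $\int\bar\phi^2=1$ at $(0,0)$, and using the classical structure of shape derivatives for the Dirichlet/Neumann data on a perturbed boundary, one finds that $\dot\phi$ solves a problem of the form $\Delta\dot\phi+\lambda_1\dot\phi=-\dot\lambda\,\phi_1$ in $B_1^+$ with Dirichlet datum $-\partial_r\phi_1\,(\bar v+\dot v_0)$ on the spherical cap and homogeneous Neumann datum on the flat part. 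Testing this against $\phi_1$ and integrating by parts (using that $\phi_1$ vanishes on the spherical cap and satisfies the Neumann condition on the flat part) kills the boundary term and forces $\dot\lambda=0$; similarly the zero-mean condition $\int_{S^n_+}\bar v=0$, together with the volume constraint, will force $\dot v_0=0$. Hence $\dot\phi$ is exactly the function $\psi$ of \eqref{c00}, and one must also check it is $L^2$-orthogonal to $\phi_1$, which follows from differentiating the normalization.

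Next I would differentiate $F$ itself. Writing the first expression for $F$ in terms of $\hat g$ on the fixed boundary $\partial B_1^+\cap\R^{n+1}_+$, the derivative at $(0,0)$ of the normal-derivative term, when the boundary itself is being deformed in the normal direction by $\bar v$, is the sum of the shape derivative of $\phi$ (giving $\partial_r\psi$ on $\partial B_1^+\cap\R^{n+1}_+$) and a term coming from the displacement of the boundary, namely $\partial_r^2\phi_1\,\bar v$; this is precisely $L_0(\bar v)$ as defined in \eqref{Hache}. It remains to observe that the averaging/normalizing term in $F$ contributes nothing to the linearization at $(0,0)$: since $\partial_r\phi_1$ is a nonzero constant on the sphere and $\bar v$ has zero mean, the zeroth-order value $\bar g(\nabla\phi_1,\nu)=\partial_r\phi_1$ is already constant, so $F(p,0,0)=0$, and differentiating the projector $X\mapsto X-\fint X$ at a constant argument just gives the same projector applied to $\dot X$; since $L_0$ already lands in the zero-mean space $C^{1,\alpha}_m$, the subtraction is the identity on it. Collecting these computations gives $\partial_{\bar v}F(p,0,0)=L_0$.

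The main obstacle is the justification of the shape-derivative formulas in the nonsmooth setting: the domain $B_1^+$ has an edge where the spherical cap meets $\{x^0=0\}$ at angle $\pi/2$, so $\phi_1$ and $\psi$ are only as regular as Proposition \ref{reg} allows (namely $C^{2,\alpha}(\overline{B_1^+})$, thanks to the symmetrization argument), and one must make sure that all the integrations by parts, the differentiation of $t\mapsto\bar\phi$, and the identification of boundary traces are legitimate. The regularity $C^{2,\alpha}$ up to the corner furnished by Proposition \ref{reg}, together with the smoothness of $(\eps,\bar v)\mapsto(\bar\phi,\bar\lambda,v_0)$ from Proposition \ref{pr:1.2}, is exactly what makes the formal computation rigorous; apart from that, the argument is the same as the corresponding one in \cite{pacsic}.
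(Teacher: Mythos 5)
Your argument is correct, but it takes a genuinely different route from the paper. You propose the \emph{direct} computation: differentiate the eigenvalue problem on the half-ball, identify the shape derivative $\dot\phi$ of the eigenfunction with the function $\psi$ of \eqref{c00} (after checking $\dot\lambda=0$ and $\dot v_0=0$ via Green's formula, the zero-mean condition on $\bar v$ and the volume constraint), and then assemble $\partial_r\psi+\partial_r^2\phi_1\,\bar v$ from the material derivative of the Neumann data. This is essentially a re-run, on $B_1^+$, of the computation carried out in \cite{pacsic} for the full ball, and as you note its delicate point is legitimizing the integrations by parts and the trace identifications near the edge $\Gamma$, for which the $C^{2,\alpha}$ regularity of Proposition \ref{reg} (valid precisely because the contact angle is $\pi/2$) is the key input; one also needs the Fredholm solvability of \eqref{c00}, which holds because $\int_{S^n_+}\bar v=0$ makes the Dirichlet datum orthogonal to $\partial_r\phi_1$. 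The paper instead avoids redoing any computation: it performs an even reflection across $\{x^0=0\}$ (the isomorphisms $\alpha$ and $\beta$ between $C^{k,\alpha}_{m,NC}(\overline{S^n_+})$ and the symmetric classes $C^{k,\alpha}_{m,Sym}(S^n)$), observes that $F(p,0,\bar v)=\tilde F(\alpha(\bar v))|_{\partial B_1^+\cap\R^{n+1}_+}$ where $\tilde F$ is the full-ball Dirichlet operator of \cite{pacsic}, and concludes $L_0=\beta\circ\tilde L_0\circ\alpha$ by symmetry of the solution of \eqref{altrro}. The reflection route buys two things: it imports the known linearization $\tilde L_0$ wholesale, and the same conjugation immediately yields the spectral properties of $L_0$ (self-adjointness, kernel, closed range) needed in Proposition \ref{H}; your direct route is self-contained on the half-ball but would still require a separate argument for those properties. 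One small point to tighten in your write-up: the claim that the averaging term acts as the identity because ``$L_0$ already lands in $C^{1,\alpha}_m$'' should be verified rather than assumed (it follows, e.g., from differentiating the volume constraint, or is harmless since subtracting the mean only composes $L_0$ with the projection onto mean-zero functions).
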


\begin{proof} When $\eps =0$ we have already seen that $\bar g$ in the coordinates $y$ is the Euclidean metric.
If $v \in C^{2,\alpha}_{m} (S^{n})$ we can define the operator $F$:
\[
\tilde F (v) =  \displaystyle \langle \nabla \tilde \phi ,  \tilde \nu \rangle  \, |_{\partial B_{1+v}}   - \frac{1}{{\rm Vol}\big(\partial B_{1+v}\big)} \, \int_{\partial B_{1+v}}Ê \, \langle \nabla \tilde \phi ,  \tilde \nu \rangle \,  \, ,
\]
where $\tilde \nu$ denotes the unit normal vector field to $\partial B_{1+v}$ and {$\tilde \phi$} is the solution, with $L^2$-norm equal to $2$, of 
\begin{equation}\label{vecchio}
\left\{
\begin{array}{rcccl}
	\Delta\, \tilde \phi + \bar \lambda \, \tilde \phi & = & 0 & \textnormal{in} & B_{1+v} \\[1mm]
	\tilde \phi & = & 0 & \textnormal{on} & \partial B_{1+v} 
\end{array}\,.
\right.
\end{equation}
After identification of $\partial B_{1+v}$ with $S^n$ we can considered the operator $\tilde F$ well defined from $C^{2,\alpha}_{m} (S^{n})$ into $C_m^{1,\alpha}(S^{n})$. In the proof of Proposition 4.3 in \cite{pacsic} it is proved that the linearization of $\tilde F$ with respect to $v$ at $v=0$ is given by the operator 
\begin{equation}\label{tildeH}
\begin{array}{ccccc}
	\tilde L_{0} & : & C^{2, \alpha}_m (S^{n}) & \longrightarrow & C^{1, \alpha}_{m} (S^{n})\\ 
	& & v & \mapsto & \left( {\partial_r \tilde \psi }  + {\partial^2_r \tilde \phi_{1}} \, v \right) \, |_{\partial B_1}
\end{array}
\end{equation}
where $\tilde \phi_1$ is the first eigenfunction of $-\Delta$ in $B_1$ with 0 Dirichlet boundary condition and normalized to have $L^2$-norm equal to $2$, and $\tilde \psi$ is the (unique) solution of 
\begin{equation}\label{altrro}
\left\{
\begin{array}{rcll}
	\displaystyle \Delta \tilde \psi + \lambda_{1}Ê\,Ê \tilde \psi & = & 0  & \textnormal{in} \qquad B_1 \\[1mm]
	\tilde \psi & = &  - \displaystyle  {\partial_r \tilde \phi_{1}} \, v  & \textnormal{on}\qquad \partial B_1
\end{array}
\right.
\end{equation}
which is $L^2(B_1)$-orthogonal to $\tilde \phi_1$. Notice that $\phi_1$ and $\psi$ are then the restrictions of $\tilde \phi_1$ and $\tilde \psi$ to the half-ball $B^+_1$.

\medskip

Let $w$ be a function in $ C^{2, \alpha}_{m,NC} (\overline{S^{n}_+})$. We extend the function $w$ to a function $\tilde w$ over all $S^n$ in this way: for $(y^0,y^1,...,y^n) \in S^n_+$ we set
\[
\tilde w(-y^0,y^1,...,y^n) = w(y^0,y^1,...,y^n)\,.
\]
Observe that $\tilde w \in C^{2, \alpha} (S^{n})$ because the function $w$ satisfies the Neumann condition at the boundary of $S^n_+$, and his mean is 0 because are 0 the means over $S^n_+$ and over the complement of $S^n_+$. We conclude that $\tilde w \in C^{2, \alpha}_{m,Sym} (S^{n})$, where the subscript $Sym$ means that the function is symmetric with respect to the hyperplane $\{x_0 = 0\}$, and $m$ means as usual that the function has mean 0. We have defined the mapping 
\begin{equation}\label{alpha}
\begin{array}{ccccc}
	\alpha & : & C^{2, \alpha}_{m,NC} (\overline{S^{n}_+}) & \longrightarrow & C^{2, \alpha}_{m,Sym} (S^{n})\\
	& & w & \mapsto & \tilde w
\end{array} \,,
\end{equation}
and it is easy to see that this mapping in an isomorphism.

\medskip

If we consider the operator $\tilde F$ defined only in $C^{2,\alpha}_{m,Sym} (S^{n})$, it is natural that its linearization with respect to $v$ at $v=0$ is given by the operator $\tilde L_{0}$ restricted to $C^{2,\alpha}_{m,Sym} (S^{n})$ with image in $C^{1,\alpha}_{m,Sym} (S^{n})$. We observe that if $v \in C^{2,\alpha}_{m,Sym} (S^{n})$, then the solution of (\ref{vecchio}) is symmetric with respect to the hyperplain $\{x_0 = 0\}$ and the normal derivative with respect to $x_0$ computed at $\{x_0 = 0\}$ is 0. Then from the definitions of $F$ and $\tilde F$ we conclude that
\[
F(p,0,\bar v) = \tilde F(\alpha(\bar v))|_{\partial B_1^+ \cap \mathbb R^{n+1}_+} 
\]
where $\alpha$ is the isomorphism defined in \eqref{alpha}. We define also the mapping 
\[
\begin{array}{ccccc}
	\beta & : & C^{1, \alpha}_{m,Sym} (S^{n}) & \longrightarrow & C^{1, \alpha}_{m,NC} (S^{n}_+)\\ 
	& & v & \mapsto & v|_{S^{n}_+}
\end{array}
\]
and we observe that it is an isomorphism. We claim that
\[
L_{0} = \beta \circ \tilde L_{0} \circ \alpha.
\]
We remark that the operator $\beta \circ \tilde L_{0} \circ \alpha$ is defined on $C^{2, \alpha}_{m,NC} (\overline{S^{n}_+})$ and his image is contained in $C^{1, \alpha}_{m,NC} (S^{n}_+)$. We have to prove that
\[
L_{0}(w) = \tilde L_{0} (\tilde w) |_{\partial B_1^+ \cap \mathbb R^{n+1}_+}
\]
By the symmetry of the funcion $\tilde w$ with respect to the hyperplane $\{x_0 = 0\}$, we conclude that the solution of \eqref{altrro} with $v = \tilde w$ is symmetric with respect to the hyperplane $\{x_0 = 0\}$, then $\partial_{x_0} \tilde \psi \, |_{\{x_0=0\}}= 0$ and $\tilde L_{0} (\tilde w)$ is symmetric with respect to the hyperplane $\{x_0 = 0\}$. So the restriction of $\tilde \psi$ to the half-ball $B_1^+$ is the solution of (\ref{c00}), where $\bar v = w$, and $L_{0}(w)$ is exactly the restriction of $\tilde L_{0} (\tilde w)$ to $\partial B_1^+ \cap \mathbb R^{n+1}_+$. This completes the proof of the claim.
Using this relation we conclude that that
\[
L_0 (\bar w) = \tilde L_{0}(\alpha(\bar w))|_{\partial B_1^+ \cap \mathbb R^{n+1}_+} \,.
\]
This completes the proof of the proposition.\end{proof}
\subsection{Study of the operator $L_{0}$}\label{ssect:H}

{\begin{Proposition}
\label{H}
The operator 
\[
L_{0} : C^{2, \alpha}_{m,NC} (\overline{S^{n}_+}) \longrightarrow C^{1, \alpha}_{m,NC} (S^{n}_+) , 
\]
is a self adjoint, first order elliptic operator. Its kernel $K$ is given by the space of linear functions depending only on the coordinates $y^1, ..., y^n$, i.e. functions
\[
\begin{array}{ccc}
\overline{S^n_+} & \to & \mathbb{R}\\
y &\to &\langle a, y \rangle
\end{array}
\]
for some $a = (a^0,a') \in \R^{n+1}$ with $a^0 =0$. Moreover, $L_{0}$ has closed range and is an isomorphism from $K^\bot$ to $Im(L_0)$, where $K^\bot$ is the space $L^2$-orthogonal to $K$ in $C^{2, \alpha}_{m,NC} (\overline{S^{n}_+})$ and $Im(L_0)$ denotes the range of $L_0$ in $C^{1, \alpha}_{m,NC} (S^{n}_+)$.
\end{Proposition}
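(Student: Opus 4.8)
The plan is to reduce everything to the already-analyzed operator $\tilde L_0$ on the full sphere $S^n$ via the isomorphisms $\alpha,\beta$ from the proof of Proposition~\ref{calcul-deriv}, and to quote the spectral analysis of $\tilde L_0$ carried out in \cite{pacsic}. Recall from that reference that $\tilde L_0:C^{2,\alpha}_m(S^n)\to C^{1,\alpha}_m(S^n)$ is a first order, self-adjoint elliptic operator (it is $\partial_r\tilde\psi+\partial_r^2\tilde\phi_1\,v$, which after decomposing $v$ into spherical harmonics is diagonal with eigenvalues computable from the radial ODE satisfied by $\tilde\phi_1$ and the Bessel-type functions entering the solution of \eqref{altrro}), and that its kernel is exactly the span of the coordinate functions $y\mapsto\langle a,y\rangle$, $a\in\R^{n+1}$ --- this reflects the invariance of $\lambda_1$ of the Euclidean ball under translations. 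Since $L_0=\beta\circ\tilde L_0\circ\alpha$ and $\alpha,\beta$ are isomorphisms of the corresponding (mean-zero, respectively symmetric/Neumann) spaces, $L_0$ inherits ellipticity and the first-order character immediately; self-adjointness of $L_0$ follows because $\alpha,\beta$ are, up to the factor coming from restriction to the half-sphere, $L^2$-isometries respecting the symmetric subspace, so I would verify this by a direct integration-by-parts identity $\int_{S^n_+}L_0(u)\,w=\int_{S^n_+}u\,L_0(w)$ obtained from the symmetry of Green's formula for \eqref{c00} (the boundary term on $\partial S^n_+$ vanishes by the Neumann condition).

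Next I would identify the kernel. An element of $\ker L_0$ corresponds under $\alpha$ to an element of $\ker\tilde L_0$ that is \emph{symmetric} with respect to $\{y^0=0\}$. The kernel of $\tilde L_0$ is $\{\langle a,y\rangle: a\in\R^{n+1}\}$; the symmetric ones are exactly those with $a^0=0$, i.e. the linear functions depending only on $y^1,\dots,y^n$. Conversely each such function is readily checked to lie in $C^{2,\alpha}_{m,NC}(\overline{S^n_+})$ (mean zero over the hemisphere by oddness in the relevant variable, and Neumann on $\partial S^n_+$ since $\partial_N$ there is $\partial_{y^0}$ and these functions are $y^0$-independent) and to be annihilated by $L_0$. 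This gives the stated description of $K$; in particular $\dim K=n$.

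For the Fredholm/isomorphism statement: $L_0$ is an elliptic, self-adjoint, first-order (pseudodifferential) operator of order $1$ acting between the indicated Hölder spaces, hence by the standard elliptic theory on a compact manifold with boundary (with the admissible mixed/Neumann boundary condition, which is covered by Proposition~\ref{reg} for the associated second order problem, or equivalently transported from the closed-manifold statement for $\tilde L_0$ in \cite{pacsic}) it has closed range and finite-dimensional kernel and cokernel, and by self-adjointness $\mathrm{Im}(L_0)=K^\perp$ in the $L^2$ pairing; restricting the domain to $K^\perp\cap C^{2,\alpha}_{m,NC}(\overline{S^n_+})$ then yields a continuous bijection onto $\mathrm{Im}(L_0)$, whose continuous inverse is furnished by the open mapping theorem (or directly by elliptic estimates). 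I expect the only genuinely delicate point to be the bookkeeping of the boundary condition on $\partial S^n_+$ under the symmetrization isomorphism $\alpha$ --- that is, checking that ``Neumann on $\partial S^n_+$'' for $L_0$ corresponds precisely to ``symmetric across $\{y^0=0\}$, mean zero'' for $\tilde L_0$, so that the kernel, the closed-range property, and self-adjointness transfer without loss; once this correspondence is in place, all three assertions follow from the scalar spectral analysis of $\tilde L_0$ in \cite{pacsic} together with abstract functional analysis, and no new computation is needed.
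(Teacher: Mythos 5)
Your proposal is correct and follows essentially the same route as the paper: write $L_{0}=\beta\circ\tilde L_{0}\circ\alpha$, transfer the self-adjointness, ellipticity, kernel description and the elliptic estimate of $\tilde L_{0}$ from \cite{pacsic} to its restriction to the symmetric subspace $C^{2,\alpha}_{m,Sym}(S^{n})$, and identify the kernel of $L_{0}$ with the symmetric linear functions, i.e.\ those with $a^{0}=0$. The paper's only notable emphasis beyond what you wrote is that the closed-range and isomorphism claims come directly from the quoted estimate $\| v \|_{C^{2,\alpha}}\leq c\,\|\tilde L_{0}(v)\|_{C^{1,\alpha}}$ for $v$ orthogonal to the kernel, rather than from general Fredholm theory, but this is a presentational difference only.
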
}
\medskip

\begin{proof} Let $\tilde L_{0}$ the operator defined in \eqref{tildeH} and $\alpha$ the isomorphism defined in \eqref{alpha}. In Proposition 4.2 of \cite{pacsic} it is proved that:
\begin{itemize}
\item $\tilde L_{0}$ is a self adjoint, first order elliptic operator,
\item its kernel is given by the space of linear functions restraint to $S^{n}$, and
\item there exists a constant $c >0$ such that 
\begin{equation}\label{norme}
\| v \|_{C^{2, \alpha}(S^{n})}  \leq c \, \| \tilde L_{0}(v)  \|_{C^{1, \alpha}(S^{n})} \, ,
\end{equation}
provided that $v$ is $L^2(S^{n})$-orthogonal to the kernel of $\tilde L_{0}$.
\end{itemize}
{The last elliptic estimate implies that the operator $\tilde L_0$ has closed range, and using the other two properties we have that $\tilde L_{0}$ is an isomorphism from the space $L^2$-orthogonal to its kernel and its range.}

\medskip

We are interested in considering the operator $\tilde L_{0}$ defined only in the domain $C^{2, \alpha}_{m,Sym} (S^{n})$ and from now on $\tilde L_{0}$ will be defined only in $C^{2, \alpha}_{m,Sym} (S^{n})$. The image of $\tilde L_{0}$ is naturally given by functions that are symmetric with respect to the hyperplane $\{x_0 = 0\}$, then we have
\[
\tilde L_{0} : C^{2, \alpha}_{m,Sym} (S^{n}) \longrightarrow C^{1, \alpha}_{m,Sym} (S^{n})
\]
We can conclude that the new operator $\tilde L_{0}$ is a self-adjoint, first order elliptic operator, with kernel  $\tilde K$ given by the space of linear functions which are symmetric with respect to the hyperplane $\{x_0 = 0\}$, i.e. functions
\[
\begin{array}{ccc}
\overline{S^n_+} & \to & \mathbb{R}\\
y &\to &\langle a, y \rangle
\end{array}
\]
for some $a = (a^0,a') \in \R^{n+1}$ with $a^0 =0$. Inequality (\ref{norme}) holds naturally also for the new operator $\tilde L_{0}$, provided $v$ is $L^2(S^{n})$-orthogonal to $\tilde K$.
\medskip

From the proof of Proposition \ref{calcul-deriv} we have
\[
L_{0} = \beta \circ \tilde L_{0} \circ \alpha \,.
\]
With this caracterization of the operator $L_{0}$ and the properties of $\tilde L_{0}$, we deduce that the kernel  of $L_{0}$ is given by the space $K$ of functions 
\[
\begin{array}{ccc}
\overline{S^n_+} & \to & \mathbb{R}\\
y &\to &\langle a, y \rangle
\end{array}
\]
for some $a = (a^0,a') \in \R^{n+1}$ with $a^0 =0$, and that {$L_{0}$ has closed range and is an isomorphism from $K^\bot$ to $Im(L_0)$, where $K^\bot$ is the space $L^2$-orthogonal to $K$ in $C^{2, \alpha}_{m,NC} (\overline{S^{n}_+})$ and $Im(L_0)$ denotes the range of $L_0$ in $C^{1, \alpha}_{m,NC} (S^{n}_+)$.}
\end{proof}

\subsection{Solving the problem on the space orthogonal to the kernel of $L_{0}$}

\begin{Lemma}
\label{le:3.33}
Let $p \in \partial M$. There exists a function $f_p \in C^{1, \alpha}([0,1])$ such that
\[
F(p, \eps, 0 ) (y^0,y') = \eps\, f_p(y^0)\, +  {\mathcal O}(\eps^2)
\]
for all $\eps$ small enough.
\end{Lemma}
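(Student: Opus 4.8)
The plan is to compute the first-order term in the $\eps$-expansion of $F(p,\eps,0)$ by tracking the dependence on $\eps$ in the metric $\hat g$ and in the eigenpair $(\bar\phi,\bar\lambda,v_0)$, using the expansion \eqref{p000}. Since $\bar v = 0$, the parametrization $\hat\Psi$ reduces to the map $y\mapsto \bar\Psi(y)$ (with $v_0 = v_0(p,\eps,0)$ a small scalar), and by Proposition \ref{pr:1.2} the data $(\bar\phi,\bar\lambda,v_0)$ depend smoothly on $\eps$ and equal $(\phi_1,\lambda_1,0)$ at $\eps = 0$. First I would write Taylor expansions in $\eps$: $\hat g_{ij} = \delta_{ij} + \eps\, \dot g_{ij} + \mathcal O(\eps^2)$ where, from \eqref{p000}, $\dot g_{ij} = 2\,g(\nabla_{E_i}N,E_j)\,y^0$ (note $\dot g_{00} = \dot g_{0j} = 0$); and $\bar\phi = \phi_1 + \eps\,\phi^{(1)} + \mathcal O(\eps^2)$, $\bar\lambda = \lambda_1 + \eps\,\lambda^{(1)} + \mathcal O(\eps^2)$, $v_0 = \eps\, v_0^{(1)} + \mathcal O(\eps^2)$. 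The key point is that all these first-order coefficients are \emph{explicit} functions on $B_1^+$ determined by linear elliptic problems obtained by differentiating \eqref{formula}, \eqref{noral}, and the volume constraint \eqref{c1mesure} at $\eps = 0$; in particular $\phi^{(1)}$ solves an inhomogeneous mixed boundary value problem whose source involves $(\Delta_{\hat g} - \Delta)\phi_1$ at first order, which by Proposition \ref{reg} is in $C^{0,\alpha}$, so $\phi^{(1)}\in C^{2,\alpha}(\overline{B_1^+})$.

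Next I would substitute these expansions into the alternative expression
\[
F(p,\eps,0) = \hat g(\nabla\hat\phi,\hat\nu)\big|_{\partial B_1^+\cap\R^{n+1}_+} - \frac{1}{{\rm Vol}_{\hat g}(\partial B_1^+\cap\R^{n+1}_+)}\int_{\partial B_1^+\cap\R^{n+1}_+} \hat g(\nabla\hat\phi,\hat\nu)\,{\rm dvol}_{\hat g},
\]
and collect the $\mathcal O(\eps)$ term. At $\eps = 0$ the leading term $\langle\nabla\phi_1,\nu\rangle$ is radial hence constant on $\partial B_1^+\cap\R^{n+1}_+$, so $F(p,0,0) = 0$; the first-order term is then a sum of contributions: from $\hat\nu$ and $\hat g$ depending on $\dot g$, and from $\nabla\phi^{(1)}$. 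The crucial structural fact is that at $\eps = 0$ all of $\phi_1$, $\nu$ and the Euclidean metric are \emph{rotationally invariant about the $x^0$-axis within} $\R^{n+1}_+$ — more precisely, invariant under the subgroup $O(n)$ acting on $(y^1,\dots,y^n)$ — and the perturbation $\dot g_{ij}(y) = 2\,g(\nabla_{E_i}N,E_j)\,y^0$ is quadratic in $y'$ with constant coefficients. I would argue that after averaging (the second term in $F$ subtracts the mean) and by parity/symmetry considerations, the resulting first-order term on $\partial B_1^+\cap\R^{n+1}_+$, written in terms of $(y^0,y')$ with $|y|=1$, depends only on $y^0$: the $O(n)$-equivariance forces $F(p,\eps,0)$, which is a scalar function, to be $O(n)$-invariant at each order, hence a function of $y^0$ alone on the unit hemisphere. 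This identifies $f_p(y^0)$ as the $\eps$-coefficient, and the smoothness $f_p\in C^{1,\alpha}([0,1])$ follows because $F$ takes values in $C^{1,\alpha}$ and the expansion is in the Banach-space sense provided by Proposition \ref{pr:1.2}.

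The main obstacle I anticipate is making the $O(n)$-invariance argument rigorous: one must check that the second fundamental form term $\sum_{i,j} g(\nabla_{E_i}N,E_j)\,y^iy^j$ (which is \emph{not} $O(n)$-invariant) contributes to $F(p,\eps,0)$ only through pieces that either vanish after subtracting the mean or reorganize into a function of $y^0$. The clean way to see this is to diagonalize the symmetric bilinear form $g(\nabla_{E_i}N,E_j)$ and note that its non-isotropic part $\sum(\kappa_i - \bar\kappa)(y^i)^2$ (with $\bar\kappa$ the average, i.e. proportional to $\mathrm H(p)$) enters the linearized problem as a source orthogonal, in the relevant $L^2$ pairing on $S^n_+$, to the constants and to the radial profile — so its contribution to $\hat g(\nabla\hat\phi,\hat\nu)$ at order $\eps$ is again orthogonal to constants, and gets killed by the mean-subtraction, leaving only the trace part $\bar\kappa\,y^0 \sim \mathrm H(p)\,y^0$ which is manifestly a function of $y^0$. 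This is precisely the "lower order term orthogonal to linear functions induced by elements of $\partial\R^{n+1}_+$" phenomenon announced in the introduction, and it is the content that must be verified by a direct (but not lengthy) computation of the first variation of \eqref{formula} at $\eps = 0$.
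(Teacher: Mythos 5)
Your overall strategy---expanding $(\hat g,\hat\phi,\hat\lambda,v_0)$ in powers of $\eps$ through the implicit-function-theorem framework of Proposition \ref{pr:1.2}, and then arguing that the first-order coefficient of $F(p,\eps,0)$ is a function of $(y^0,|y'|)$, hence of $y^0$ alone on the unit hemisphere---is exactly the paper's. You also correctly put your finger on the crux: the first-order metric perturbation $2\eps\, g(\nabla_{E_i}N,E_j)\,y^0$ is not $O(n)$-invariant in $y'$ unless the second fundamental form of $\partial M$ at $p$ is isotropic (its contraction against $\partial_i\partial_j\phi_1$ produces the quadratic form $\sum_{i,j}g(\nabla_{E_i}N,E_j)\,y^iy^j$, as the explicit computation in Lemma \ref{le:3.3} shows), so a naive equivariance argument does not close.

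However, your proposed resolution of this difficulty is wrong, and this is a genuine gap. You argue that the trace-free part $\sum_i(\kappa_i-\bar\kappa)(y^i)^2$ contributes to $\hat g(\nabla\hat\phi,\hat\nu)|_{S^n_+}$ only through a piece that is ``orthogonal to constants, and gets killed by the mean-subtraction.'' This is backwards: the second term in the definition of $F$ subtracts only the \emph{mean value} of $\hat g(\nabla\hat\phi,\hat\nu)$, i.e.\ its component along constants. A function that is $L^2$-orthogonal to constants has zero mean and therefore passes through the mean-subtraction \emph{unchanged}. If the anisotropic source produced a nonzero contribution of the form $y^0\bigl(\sum_{i,j}h^0_{ij}y^iy^j\bigr)\sigma(y^0)$ (zero-mean in $y'$ but not radial), it would survive in $F(p,\eps,0)$ at order $\eps$ and the claimed form $\eps f_p(y^0)+\mathcal O(\eps^2)$ would fail. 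What must actually be shown---and what the paper asserts at this point, tersely, ``by the symmetry of the problem''---is that the first-order term of $N(\eps,0,0,0)$, hence (by the implicit function theorem) of $\psi$ and $v_0$, and finally of $\hat g(\nabla\hat\phi,\hat\nu)$, is \emph{pointwise} a function of $(y^0,|y'|)$, not merely of zero mean. The mechanism you invoke cannot deliver this: orthogonality to constants is precisely the property that the projection performed in $F$ preserves rather than destroys. To repair the proof you would need to show that the anisotropic component vanishes identically in the relevant boundary trace (or supply the symmetry argument the paper alludes to), not that it has zero average.
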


\begin{proof} We keep the notations of the proof of the Proposition \ref{pr:1.2} with $\bar v \equiv 0$. Since $\bar v \equiv 0$, we have  
\[
N (\eps, 0, 0, 0)  = \left( (\Delta_{\hat g} - \Delta + \mu  )\, \phi_1   \, , \,   {\rm Vol}_{\hat g}Ê(B_{1}^+) - {\rm Vol} \, (B_{1}^+) \right) \, ,
\]
and 
\[
\mu  = -  \int_{B_{1}^+} \phi_1 \,   (\Delta_{\hat {g}} - \Delta ) \, \phi_1  \,  \, .
\]
If in addition  $v_0 =0$, we can estimate
\[
\hat{g}_{ij}  = \delta_{ij}  + \hat G_{ij}\,\eps\,y^0 +  {\mathcal O}(\eps^2)\, ,
\]
where $\hat G_{ij}$ are real constants. Hence, by the symmetry of the problem,
\[
N(\eps, 0, 0, 0) (y^0,y') =  \eps \, (\varphi(y^0, |y'|), V) +  {\mathcal O}(\eps^2) \, ,
\]
where the first component of $\varphi \in C^{0,\alpha}([0,1]^2)$ and $V$ is a real number.
The implicit function theorem immediately implies that the solution of 
\[
N(\eps, v_0, 0,  \psi) =0
\]
satisfies
\[
\| \psi (\eps, p, 0) \|_{C^{2, \alpha}}Ê+ |v_0 (\eps, p, 0)|Ê\leq c\, \eps
\]
but in addition there exist a function $\tilde \psi_p \in C^{2, \alpha}([0,1]^2)$ such that
\[
\psi (\eps, p, 0) (y^0,y') = \eps \tilde \psi_p(y^0,|y'|) +  {\mathcal O}(\eps^2)\,.
\]
To complete the proof, observe that $\hat \nu = (1+ v_0)^{-1} \, \partial_r$ on $\partial B_{1}^+ \cap \mathbb R^{n+1}_+$ when $\bar v\equiv 0$. Therefore there exist a function $\hat f_p \in C^{2, \alpha}([0,1]^2)$ such that
\[
{\hat g (\nabla \hat \phi, \hat \nu )} (y^0,y') = \partial_r \phi_1 + \eps \, \hat f_p(y^0, |y'|) +  {\mathcal O}(\eps^2)\,.
\]
(be careful that  $\hat g$ is defined with $v_0 = v_0(\eps, p, 0)$ and $\bar v \equiv 0$). Since $\partial_r \phi_1$ is constant along $\partial B_{1}^+ \cap \mathbb R^{n+1}_+$, we conclude that there exist a function $f_p \in C^{2, \alpha}([0,1])$ such that
\[
F(p, \eps, 0 ) (y^0,y') = \eps f_p(y^0) +  {\mathcal O}(\eps^2)\,.
\]
This completes the proof of the Lemma.
\end{proof}

\begin{Proposition}\label{prop:partsol}
There exists $\eps_0 >0$ such that, for all $\eps \in [0, \eps_0]$ and for all $p$ in a compact subset of $\partial M$, there exists a unique function $\bar v = \bar v(p, \eps) \in K^\bot$ such that 
\[
F (p, \eps, \bar v (p,\eps) )  \in K \,.
\]
The function $\bar v (p, \eps)$ 
depends smoothly on $p$ and $\eps$ and 
\[
\bar v (p,\eps) (y^0,y') = \eps \tilde v_p(y^0) + \mathcal O (\eps^2)
\]
for a suitable function $\tilde v_p \in C^{2,\alpha}([0,1])$.
\end{Proposition}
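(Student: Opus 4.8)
The plan is to solve $F(p,\eps,\bar v)\in K$ by the implicit function theorem, after splitting off the kernel $K$ of $L_0=\partial_{\bar v}F(p,0,0)$ from the target space. First I would record the ingredients already at hand: by Proposition~\ref{pr:1.2} the map $F$ is smooth on a neighbourhood of $(p,0,0)$ in $\partial M\times[0,\infty)\times C^{2,\alpha}_{m,NC}(\overline{S^n_+})$, with values in $C^{1,\alpha}_{m,NC}(S^n_+)$, and extends smoothly to $\eps=0$; when $\eps=0$ and $\bar v=0$ the metric $\bar g$ is Euclidean, so $\bar g(\nabla\bar\phi,\bar\nu)=\partial_r\phi_1$ is constant along $\partial B_1^+\cap\mathbb R^{n+1}_+$ and hence $F(p,0,0)=0$; and by Proposition~\ref{calcul-deriv} together with Proposition~\ref{H}, $\partial_{\bar v}F(p,0,0)=L_0$ is a self-adjoint first order elliptic operator, \emph{independent of $p$}, with kernel $K$ and closed range, whose restriction to $K^\bot$ is an isomorphism onto $\mathrm{Im}(L_0)$. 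Note that $K\subset C^{1,\alpha}_{m,NC}(S^n_+)$: a linear function $y\mapsto\langle a,y\rangle$ with $a^0=0$ has zero mean on $S^n_+$ and vanishing Neumann data on $\partial S^n_+$, by the symmetry $y^i\mapsto-y^i$. Since $K$ is finite dimensional and spanned by smooth functions, the $L^2(S^n_+)$-orthogonal projection $\pi$ onto $K$ is bounded on $C^{1,\alpha}_{m,NC}(S^n_+)$; and by self-adjointness of $L_0$ and the closed range property of Proposition~\ref{H}, $\mathrm{Im}(L_0)$ coincides with the $L^2$-orthogonal complement of $K$ inside $C^{1,\alpha}_{m,NC}(S^n_+)$, so that $I-\pi$ is precisely the projection onto $\mathrm{Im}(L_0)$ along $K$.

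Next I would set $G(p,\eps,\bar v):=(I-\pi)\bigl(F(p,\eps,\bar v)\bigr)$, defined for $\bar v$ near $0$ in $K^\bot\cap C^{2,\alpha}_{m,NC}(\overline{S^n_+})$ and valued in $\mathrm{Im}(L_0)$; solving $F(p,\eps,\bar v)\in K$ is then equivalent to solving $G(p,\eps,\bar v)=0$. One has $G(p,0,0)=0$, and since for $w\in K^\bot$ the element $L_0w$ lies in $\mathrm{Im}(L_0)=\ker\pi$, it follows that $\partial_{\bar v}G(p,0,0)=(I-\pi)\,L_0|_{K^\bot}=L_0|_{K^\bot}$, which by Proposition~\ref{H} is an isomorphism from $K^\bot$ onto $\mathrm{Im}(L_0)$. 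The implicit function theorem then produces $\eps_0>0$ and a unique smooth map $(p,\eps)\mapsto\bar v(p,\eps)\in K^\bot$, defined near $\eps=0$, with $G(p,\eps,\bar v(p,\eps))=0$, i.e. $F(p,\eps,\bar v(p,\eps))\in K$; smoothness in $(p,\eps)$ is inherited from that of $F$. Because $L_0$, hence $\partial_{\bar v}G(p,0,0)$, does not depend on $p$, the neighbourhood on which the implicit function theorem applies, and therefore $\eps_0$, can be chosen uniformly over any compact subset of $\partial M$.

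Finally, for the asymptotic expansion, uniqueness forces $\bar v(p,0)=0$, so by Taylor's formula in $\eps$ (using the $C^\infty$ dependence) one gets $\bar v(p,\eps)=\eps\,\partial_\eps\bar v(p,0)+\mathcal O(\eps^2)$. Differentiating the identity $G(p,\eps,\bar v(p,\eps))=0$ at $\eps=0$ and using $\bar v(p,0)=0$ gives $(I-\pi)\bigl[\partial_\eps F(p,0,0)\bigr]+L_0\bigl[\partial_\eps\bar v(p,0)\bigr]=0$. By Lemma~\ref{le:3.33}, $\partial_\eps F(p,0,0)=f_p(y^0)$ depends only on the coordinate $y^0$; such a function is $L^2(S^n_+)$-orthogonal to $K$ (again by $y^i\mapsto-y^i$), so $(I-\pi)[f_p]=f_p$ and $\partial_\eps\bar v(p,0)=-(L_0|_{K^\bot})^{-1}[f_p]$. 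Since $L_0$ is built from the Euclidean Laplacian on $B_1^+$ with the mixed boundary conditions~\eqref{c00}, it commutes with the action of $O(n)$ on the variables $y'=(y^1,\dots,y^n)$; as $K$, hence $K^\bot$, is invariant under this action and the right-hand side $f_p(y^0)$ is invariant, the unique solution $\partial_\eps\bar v(p,0)\in K^\bot$ is $O(n)$-invariant, i.e. a function of $y^0$ alone, which we call $\tilde v_p\in C^{2,\alpha}([0,1])$. This yields $\bar v(p,\eps)(y^0,y')=\eps\,\tilde v_p(y^0)+\mathcal O(\eps^2)$, as claimed.

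The argument is, in essence, the standard machinery of the implicit function theorem applied to the operator reduced modulo its kernel; the points requiring care are the functional-analytic bookkeeping — checking that $I-\pi$ really is the projection onto $\mathrm{Im}(L_0)$, which uses self-adjointness and the closed range statement of Proposition~\ref{H} — and verifying, at each step, that functions depending only on $y^0$ (in particular $f_p$) are $L^2$-orthogonal to $K$. The genuinely delicate part of the overall construction, namely choosing $p_\eps$ so that the $K$-valued residual $F(p_\eps,\eps,\bar v(p_\eps,\eps))$ vanishes, is handled in the next section and does not concern this proposition.
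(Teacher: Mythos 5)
Your proof is correct and follows the same basic route as the paper: both are Lyapunov--Schmidt reductions resting on Proposition~\ref{H} and the implicit function theorem, with the $\eps$-expansion extracted from Lemma~\ref{le:3.33}. The only real difference is in how the reduction is set up: the paper augments the equation with an auxiliary unknown $a\in\partial\R^{n+1}_{+}$ and solves $\bar F(p,\eps,\bar v,a):=F(p,\eps,\bar v)+\langle a,\cdot\rangle=0$ for $(\bar v,a)$, so that the relevant differential $(w,a)\mapsto L_{0}w+\langle a,\cdot\rangle$ is invertible, whereas you compose $F$ with the projection $I-\pi$ along $K$ and solve $(I-\pi)F=0$ for $\bar v\in K^{\bot}$. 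These are equivalent; both ultimately use that $C^{1,\alpha}_{m}(S^{n}_{+})$ splits as $\mathrm{Im}(L_{0})\oplus K$, which you justify from self-adjointness and the closed-range statement of Proposition~\ref{H} (the paper uses the same fact implicitly when invoking the IFT for $\bar F$). Your version requires the small extra check that $\pi$ is bounded on the H\"older space and that $K$ sits inside the target, which you do; the paper's version avoids this bookkeeping at the cost of carrying the extra variable $a$, which it then reuses naturally in the projection step of Proposition~\ref{kernelproj}. Finally, your derivation of the expansion $\bar v(p,\eps)=\eps\,\tilde v_{p}(y^{0})+\mathcal O(\eps^{2})$ — differentiating the reduced equation at $\eps=0$, noting $f_{p}=f_{p}(y^{0})$ is $L^{2}$-orthogonal to $K$, and using $O(n)$-equivariance of $L_{0}$ in the $y'$ variables to conclude that $\partial_{\eps}\bar v(p,0)$ depends only on $y^{0}$ — spells out in detail what the paper compresses into ``follows at once from Lemma~\ref{le:3.33}'', and is a welcome clarification.
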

\begin{proof} We fix $p$ in a compact subset of $\partial M$ and define
\[
\bar F (p, \eps, \bar v , a) : = F (p, \eps, \bar v )  + \langle a, \cdot \rangle 
\]
By Proposition \ref{pr:1.2}, $\bar F$ is a $C^1$ map from a neighborhood of ${(p,0,0,0)}$ in $M \times [0, \infty) \times K^\bot \times \partial \mathbb R^{n+1}_+$ into a neighborhood of $0$ in $C^{1, \alpha} (S^{n}_+)$. Moreover we have 
\begin{itemize}
\item $\bar F (p, 0, 0 , 0) =0$, 
\item the differential of $\bar F$ with respect to $\bar v$ computed at ${(p,0,0,0)}$ is given by $L_{0}$ restricted to $K^\bot$, and
\item the image of the linear map $a \longmapsto \langle a , \cdot \rangle$, $a = (a_0,a')$ with $a_0 = 0$ coincides with $K$.
\end{itemize} 
Thanks to the result of Proposition~\ref{H}, the implicit function theorem can be applied to the equation
\[
\bar F (p, \eps, \bar v , a) = 0
\]
at $(p,0,0,0)$ with respect to the variable $\eps$. We obtain the existence of $\bar v(p,\eps) \in C^{2, \alpha}_{m,NC} (S^{n}_+)$ and $a(p, \eps) \in \partial \mathbb R^{n+1}_+$, smoothly depending on $\eps$ such that 
\[
\bar F (p, \eps, \bar v(p,\eps) , a(p, \eps)) = 0 \,,
\]
that means, by the definition of $\bar F$,
\[
F (p, \eps, \bar v (p,\eps))  \in K \,.
\]
The fact that $\bar v$ depends smoothly on $p$ and $\eps$ is standard. The $\eps$-expansion of $\bar v$ follow at once from Lemma~\ref{le:3.33}.
\end{proof}

\medskip

\subsection{Projecting over the kernel of $L_{0}$: appearance of the mean curvature of $\partial M$}

Thanks to Proposition \ref{prop:partsol} we are able to build, for all $p$ in a compact subset of $\partial M$ and $\eps$ small enough, a function $\bar v(p,Ê\eps)$ in $K^\bot$ such that 
\[
F(p, \eps, \bar v(p, \eps) ) \in K \,.
\]
Now, as natural, we project the operator $F$ over its $K$ and we then we have to find, for each $\eps$, the good point $p_\eps$ in order that such the projection of $F$ over $K$ is equal to 0. In other words, for all $\eps$ small enough we want to find a point $p_\eps \in \partial M$ such that
\[
\int_{S^n_+} F(p_\eps, \eps, \bar v(p_\eps, \eps) ) \, \langle b, \cdot \rangle\,  = 0
\] 
for all $b \in \partial \mathbb{R}^{n+1}_+$. The main result of this section is the following: 

\begin{Proposition}\label{kernelproj}
For all $p \in \partial M$ and all $b = (0,b') \in \partial \mathbb{R}^{n+1}_+$ with $|b|=1$, we have the following $\eps$-expansion:
\[
\int_{S^{n}_+} F(p,\eps,\bar v(p,\eps)) \,  \langle b, \cdot \rangle  =  C \,\eps^2\, \tilde g( \nabla^{\tilde g} {\rm H} (p) , \Theta (b') )  + \mathcal{O}(\eps^3) \,.
\]
where $C$ is a real constant, ${\rm H}$ is the mean curvature of $\partial M$, $\tilde g$ is the metric of $\partial M$ induced by $g$ and $\Theta$ has been defined in \eqref{TTtheta}.
\end{Proposition}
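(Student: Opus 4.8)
The plan is to compute the $\eps$-expansion of $F(p,\eps,\bar v(p,\eps))$ up to order $\eps^2$ explicitly, pair it against the linear functions $\langle b,\cdot\rangle$ with $b\in\partial\R^{n+1}_+$, and identify the surviving term. The starting point is the alternative expression $F(p,\eps,\bar v) = \hat g(\nabla\hat\phi,\hat\nu)|_{\partial B_1^+\cap\R^{n+1}_+} - (\text{mean})$, where $\hat\phi$ solves \eqref{formula} in the fixed domain $B_1^+$ for the pulled-back metric $\hat g$, together with the expansion \eqref{p000} of $\bar g$ (equivalently of $\hat g$ when $\bar v\equiv 0$) in the $y$-coordinates. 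I would first recall from Lemma~\ref{le:3.33} and Proposition~\ref{prop:partsol} that
\[
F(p,\eps,0)(y^0,y') = \eps\, f_p(y^0) + \mathcal O(\eps^2), \qquad \bar v(p,\eps)(y^0,y') = \eps\,\tilde v_p(y^0) + \mathcal O(\eps^2),
\]
so that the $\mathcal O(\eps)$ part of $F(p,\eps,\bar v(p,\eps))$ depends only on $y^0$; since the linear functions $\langle b,\cdot\rangle$ with $b=(0,b')$ depend only on $y'$ and are odd under $y'\mapsto -y'$ while $f_p$ and the first-order correction coming from $\bar v$ are even (indeed independent of $y'$), the $\eps^1$-term integrates to zero against $\langle b,\cdot\rangle$ on $S^n_+$. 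This is the key structural point that the authors emphasize: the "lower order terms" are orthogonal to the kernel directions because of the choice of coordinates, so the leading contribution to the projection is at order $\eps^2$.

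Next I would extract the order-$\eps^2$ term of $F$. Using \eqref{p000}, the relevant $\eps^2$-coefficient of $\hat g_{ij}$ involves the second fundamental form term $g(\nabla_{E_i}N,E_j)$ appearing at order $\eps$ and, crucially at order $\eps^2$, the terms $2\eps^2\sum_k R_{k0ij}\,y^k y^0$ and $\tfrac13\eps^2\sum_{k,\ell}\tilde R_{ikj\ell}y^k y^\ell$, plus the square of the second fundamental form. One then writes $\hat\phi = \phi_1 + \eps\phi^{(1)} + \eps^2\phi^{(2)} + \dots$, $\hat\lambda = \lambda_1 + \eps\lambda^{(1)} + \dots$, $v_0 = \eps v_0^{(1)} + \dots$, and solves the linearized problems order by order, using Proposition~\ref{reg} for the regularity of the corrections and the solvability condition (orthogonality to $\phi_1$) to determine the $\lambda$'s. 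The quantity of interest is the $\eps^2$-coefficient of $\partial_r\hat\phi$ restricted to $\partial B_1^+$, projected onto $\langle b,\cdot\rangle$. I would organize the computation so that the only contributions that are odd in $y'$ — and hence survive the pairing with $\langle b,\cdot\rangle$ — come from the cross terms linear in $y^k$ in \eqref{p000}. A Taylor expansion of the mean curvature function ${\rm H}$ of $\partial M$ around $p$ in geodesic normal coordinates reads ${\rm H}(\Phi(x')) = {\rm H}(p) + \tilde g(\nabla^{\tilde g}{\rm H}(p),\Theta(x')) + \mathcal O(|x'|^2)$; since ${\rm H}(p) = \tfrac1n\sum_i g(\nabla_{E_i}N,E_i)$ is essentially the trace of the second-fundamental-form term in \eqref{p000}, the linear-in-$y'$ part of the metric perturbation is governed precisely by $\nabla^{\tilde g}{\rm H}(p)$, which is what produces the factor $\tilde g(\nabla^{\tilde g}{\rm H}(p),\Theta(b'))$. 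The remaining curvature terms ($R_{k0ij}$, $\tilde R_{ikj\ell}$) either integrate to zero against $\langle b,\cdot\rangle$ by parity/symmetry or get absorbed into the constant $C$ after using standard integral identities on $S^n_+$ for products of restricted linear functions and the explicit radial eigenfunction $\phi_1$ (Bessel-type). I would invoke the analogous computation in \cite{pacsic} (the interior case) to handle all the "non-gradient" terms and to fix the value of $C$ as a dimensional constant involving $\partial_r\phi_1$, $\partial_r^2\phi_1$ and integrals of Gegenbauer polynomials, quoting it rather than redoing it.

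The main obstacle is the bookkeeping of the second-order expansion of the over-determined data $\hat g(\nabla\hat\phi,\hat\nu)$ in a domain with an edge: one must make sure that the edge singularity (governed by Proposition~\ref{prop:H3/2} and especially Proposition~\ref{reg}) does not contaminate the expansion, i.e. that each $\phi^{(k)}$ is genuinely $C^{2,\alpha}$ up to the corner so that $\partial_r\phi^{(k)}$ has a bona fide trace on $\partial B_1^+\cap\R^{n+1}_+$ and the projection integral makes sense term by term. This is exactly why the $\pi/2$-angle regularity result was established earlier, and I would cite it at each order. The secondary difficulty is purely computational: carefully separating, among the many $\eps^2$-terms in \eqref{p000} and in the resulting $\phi^{(2)}$, the unique piece that is odd in $y'$ and linear in $y'$, and recognizing its coefficient as $\nabla^{\tilde g}{\rm H}(p)$ up to the universal constant $C$; by the parity argument above, most terms drop, so the computation is more delicate than long. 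Once the expansion
\[
\int_{S^n_+} F(p,\eps,\bar v(p,\eps))\,\langle b,\cdot\rangle = C\,\eps^2\,\tilde g(\nabla^{\tilde g}{\rm H}(p),\Theta(b')) + \mathcal O(\eps^3)
\]
is in hand, the proof is complete.
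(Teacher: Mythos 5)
Your overall plan (expand $F$ to order $\eps^2$, pair against $\langle b,\cdot\rangle$, kill lower-order terms by parity, identify the surviving piece with $\nabla^{\tilde g}{\rm H}(p)$) is the right general shape, and your parity observation correctly disposes of the order-$\eps$ term. But there is a genuine gap at order $\eps^2$: the function $\bar v(p,\eps)$ is only known to satisfy $\bar v(p,\eps)=\eps\,\tilde v_p(y^0)+\mathcal O(\eps^2)$, and its $\mathcal O(\eps^2)$ part, call it $\bar v^{(2)}$, is \emph{not} a function of $y^0$ alone — it is an implicitly defined element of $K^\bot$ with no explicit structure. Writing $F(p,\eps,\bar v)=F(p,\eps,0)+L_0\bar v+(L_\eps-L_0)\bar v+(\textrm{quadratic remainder})$, the term $\eps^2 L_0\bar v^{(2)}$ contributes at exactly the order you are computing, and your announced principle that ``the only contributions odd in $y'$ come from the cross terms linear in $y^k$ in \eqref{p000}'' simply does not apply to it: nothing in the parity/symmetry of the metric controls $\bar v^{(2)}$. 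The paper's proof is organized precisely to neutralize this term: since $L_0$ is self-adjoint and $\langle b,\cdot\rangle\in K=\ker L_0$, one has $\int_{S^n_+}L_0 w\,\langle b,\cdot\rangle=0$ for \emph{every} $w$ (equivalently, ${\rm Im}(L_0)\perp K$), so the whole $L_0\bar v$ term drops out exactly, not just its leading part. Without invoking this orthogonality (Proposition \ref{H}), your expansion cannot close at order $\eps^2$. The remaining correction terms $(L_\eps-L_0)\bar v$ and the quadratic remainder are then handled as you suggest, by showing (Lemma \ref{le:4444}) that at order $\eps^2$ they depend only on $y^0$ and hence integrate to zero against $\langle b,\cdot\rangle$.

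Two further remarks on the main term $\int_{S^n_+}F(p,\eps,0)\langle b,\cdot\rangle$. First, the paper does not expand $\hat\phi$ order by order and take boundary traces of $\partial_r\phi^{(2)}$; instead it uses the radiality of $\phi_1$ to write $\langle b,\cdot\rangle$ as $\langle\nabla\phi_1,b\rangle/\partial_r\phi_1(1)$ on the hemisphere and then applies Green's identity, converting the boundary integral into $\int_{B_1^+}(\Delta-\Delta_{\hat g})\phi_1\,\langle\nabla\phi_1,b\rangle$ plus remainders that are $\mathcal O(\eps^3)$ by parity. This avoids ever computing $\phi^{(2)}$, and sidesteps most of the edge-regularity bookkeeping you were worried about. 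Second, the gradient of ${\rm H}$ does not come from Taylor-expanding the second fundamental form term $g(\nabla_{E_i}N,E_j)$ (whose coefficient in \eqref{p000} is frozen at $p$); it enters through the $2\eps^2\sum_kR_{k0ij}y^ky^0$ terms via the contracted Codazzi identity $\sum_iR_{i0i\sigma}=-{\rm H}_{,\sigma}$ (Lemma \ref{tec2}). For the same reason the constant $C$ cannot simply be quoted from \cite{pacsic}, where the analogous computation in geodesic normal coordinates produces the gradient of the \emph{scalar} curvature; here the Fermi-coordinate expansion and the resulting integrals must be recomputed, which the paper does explicitly.
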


\begin{proof} Take $p \in \partial M$, $\eps$ small enough, $\bar v \in C^{2,\alpha}_{m,NC}$ with small norm, and $b \in \partial \mathbb{R}^{n+1}_+$. We denote by $L_\eps$ the linearization of $F$ with respect to $\bar v$, and by $L^2_\eps$ the second derivative of $F$ with respect to $\bar v$, both computed at the point $(p, \eps, 0)$:
\[
L_\eps = \partial_{\bar v} F(p,\eps,0)\, \qquad \, \textnormal{and}\, \qquad \, L^2_\eps = \partial^2_{\bar v} F(p,\eps,0)\,.
\]
We have
\[
 \int_{S^{n}_+} \, F (p, \eps, \bar v) \, \langle b, \cdot \rangle = \int_{S^{n}_+} \, ( F (p, \eps, 0)  + L_0 \bar v) \, \langle b, \cdot \rangle +  \int_{S^{n}_+} \, (F (p, \eps, \bar v)  - F(p, \eps, 0)  - L_\eps \bar v) \, \langle b, \cdot \rangle \, + \int_{S^{n}_+} \, (L_\eps  - L_0) \bar v \, \langle b, \cdot \rangle
\]
Now we apply this formula for our function $\bar v = \bar v (p, \eps)$ given by Proposition \ref{prop:partsol}. We have $\bar v \in K^\bot$, so $L_0 \, \bar v \in K^\bot$, and then
\[
\int_{S^{n}_+} \,  L_0 \, \bar v  \, \langle b, \cdot \rangle \, = 0\,.
\] 
We obtain that
\begin{equation}\label{interme}
 \int_{S^{n}_+} \, F (p, \eps, \bar v) \, \langle b, \cdot \rangle = \int_{S^{n}_+} \, F (p, \eps, 0) \, \langle b, \cdot \rangle +  \int_{S^{n}_+} \, (F (p, \eps, \bar v)  - F(p, \eps, 0)  - L_\eps \bar v) \, \langle b, \cdot \rangle \, + \int_{S^{n}_+} \, (L_\eps  - L_0) \bar v \, \langle b, \cdot \rangle
\end{equation}
where $\bar v = \bar v(p,\eps)$ is the function given by Proposition \ref{prop:partsol}. We need now two intermediate lemmas. 

\begin{Lemma}
\label{le:3.3}
For all $p \in \partial M$, for all $b = (0,b') \in \partial \mathbb R^{n+1}_+$ we have the following $\eps$-expansion:
\[
\int_{S^{n}_+} F(p,\eps,0) \,  \langle b, \cdot \rangle   =  C \, \eps^{2} \, \tilde g( \nabla^{\tilde g} {\rm H} (p) , \Theta (b') )  + |b|\, \mathcal{O}(\eps^{3}) \,, 
\]
where $\Theta$ is defined in \eqref{TTtheta} and
\[
C = - 2\, \left( \int_{S^{n}_+} y^0\, (y^{1})^{2}  \, \right) \, \frac{1}{\partial_r \phi_1 (1)} \,  \int_{B_{1}^+} r\, |\partial_r \phi_1 |^{2}
\]
where $r = |y|$.
\end{Lemma}

\begin{proof} We recall that 
\[
F (p, \eps, \bar v) =  \displaystyle \hat g (\nabla \hat \phi ,  \hat \nu )  \, |_{\partial B_{1}^+ \cap \mathbb R^{n+1}_+}   -\frac{1}{ {\rm Vol}_{\hat g} (\partial B_{1}^+ \cap \mathbb R^{n+1}_+) } \,  \int_{\partial B_{1}^+ \cap \mathbb R^{n+1}_+}Ê \, \hat g (\nabla \hat \phi ,  \hat \nu ) \, \mbox{dvol}_{\hat g} \, ,
\]
where the metric $\hat g$ has been defined in \eqref{metrichat} for the coordinates $y$. Then 
\[
\int_{S^{n}_+} F(p,\eps,\bar v) \,  \langle b, \cdot \rangle   =   \int_{S^{n}_+} \hat g (\nabla \hat \phi ,  \hat \nu ) \,  \langle b, \cdot \rangle \,.
\]
When $\bar v=0$ we have $\hat \nu = (1+ v_0)\, \partial_r$ on $\partial B_{1}^+ \cap \mathbb R^{n+1}_+$, where $r = |y|$. Then
\begin{equation}\label{interm}
\int_{S^{n}_+} F(p,\eps,\bar v) \,  \langle b, \cdot \rangle   =   (1+ v_0)\, \int_{S^{n}_+} \frac{\partial \hat \phi}{\partial r} \,  \langle b, \cdot \rangle = \frac{1+ v_0}{\partial_r \phi_1 (1) }  \, \displaystyle \int_{S^{n}_+} \frac{\partial \hat \phi}{\partial r} \, \langle \nabla \phi_1 ,  b\rangle 
\end{equation}
where we used the fact that $\phi_1$ is a radial function. Using this last property and the Green's identities we have:
\[
\begin{array}{rllll}
\displaystyle \int_{S^{n}_+}  \frac{\partial \hat \phi}{\partial r} \, \langle \nabla \phi_1 ,  b\rangle  & =  & \displaystyle \int_{B_{1}^+} (\Delta + \lambda_{1}) \hat \phi\,\, \langle \nabla \phi_{1} , b \rangle  - \int_{B_{1}^+} \hat \phi \, \, (\Delta + \lambda_{1} )  \langle \nabla \phi_{1} , b\rangle \\[3mm]
 & =  & \displaystyle \int_{B_{1}^+} (\Delta + \lambda_{1}) \hat \phi\,\, \langle \nabla \phi_{1} , b \rangle    \\[3mm]
& =  & \displaystyle \int_{B_{1}^+} (\Delta - \Delta_{\hat g} ) \,  \hat \phi \,\, \langle \nabla \phi_{1} , b \rangle    + (\lambda_1 - \hat \lambda )  \, \int_{B_{1}^+} \hat \phi \,\,  \langle \nabla \phi_{1} , b \rangle  \\[3mm]
& =  & \displaystyle \int_{B_{1}^+}(\Delta - \Delta_{\hat g} ) \,  \phi_1\,  \langle \nabla \phi_{1} , b \rangle  + \int_{B_{1}^+} (\Delta- \Delta_{\hat g} ) \,  (\hat \phi  - \phi_1)\,\langle \nabla \phi_{1} , b \rangle + (\lambda_1 - \hat \lambda ) \int_{B_{1}^+} (\hat \phi - \phi_1) \, \langle \nabla \phi_{1} , a \rangle \\[1mm]
\end{array}
\]

\medskip

Let compute the first term. Recall that 
\[
\Delta_{\hat g}  : =  \sum_{i,j=0}^n \hat g^{ij} \, \partial_{y_i} \partial_{y_j} + \sum_{i,j=0}^n \partial_{y_i}  \hat g^{ij} \, \partial_{y_j} + \frac{1}{2} \, \sum_{i,j=0}^n \hat g^{ij} \, \partial_{y_i} \log |\hat g| \, \partial_{y_j}   \,.
\]
From \eqref{p000} we have that the coefficients of the metric $\hat g$ can be expanded, for $i,k,j,\ell = 1,...,n$, as 
\begin{eqnarray*}
\hat g_{00}(y) & = & (1+ v_0)^2\\[1mm]
\hat g_{0j}(y)& = & 0\\[1mm]
\hat g_{ij}(y) & = & (1+ v_0)^2\, \Bigg( \delta_{ij} + 2(1+ v_0)\,\eps\, g(\nabla_{E_{i}}N,E_{j})\,y^0 + R_{0i0j}\,(1+ v_0)^2\,\eps^2 \, (y^{0})^2\\
& & +(1+ v_0)^2\,\eps^2\, g(\nabla_{E_{i}}N,\nabla_{E_{j}}N)\,(y^0)^2 + 2 (1+ v_0)^2\,\eps^2\,\sum_{k} R_{k0ij}\, y^{k} \, y^{0}\\
& & + \frac{1}{3}\,(1+ v_0)^2\,\eps^2 \, \sum_{k,\ell} \tilde R_{ikj\ell} \, y^{k} \, y^{\ell} + \mathcal{O}(\eps^3)\Bigg)
\end{eqnarray*}
Keeping in mind that $v_0 = v_0(p,\eps) = \mathcal O (\eps)$, the third equality simplifies slightly obtaining
\begin{eqnarray*}
\hat g^{00}(y) & = & (1+ v_0)^{-2}\\[1mm]
\hat g^{0j}(y)& = & 0\\
\hat g^{ij}(y) & = & (1+ v_0)^{-2} \Bigg( \delta_{ij} - 2(1+ v_0)\,\eps\, g(\nabla_{E_{i}}N,E_{j})\,y^0 - R_{0i0j}\,\eps^2 \, (y^{0})^2\\
& & - \eps^2\, g(\nabla_{E_{i}}N,\nabla_{E_{j}}N)\,(y^0)^2 - 2 \eps^2\,\sum_{k} R_{k0ij}\, y^{k} \, y^{0}-\frac{1}{3}\,\eps^2 \, \sum_{k,\ell} \tilde R_{ikj\ell} \, y^{k} \, y^{\ell}\Bigg)\, + \mathcal{O}(\eps^3)\,.
\end{eqnarray*}
Using the fact that $R_{k0ii} = 0$, we have
\[
\begin{array}{rllll}
\log |\hat g| & = & \displaystyle 2n \, \log (1+v_0)  - 2\,\eps(1+ v_0)\, {\rm H}(p) \,y^0\\[1mm] 
& & \displaystyle +\, \frac{\eps^2}{2}\,\Bigg\{ \Bigg[-{\rm Ric}(N) + 4 \sum_{i\neq j} g(\nabla_{E_{i}}N,E_{i})\, g(\nabla_{E_{j}}N,E_{j}) + \sum_{i}g(\nabla_{E_{i}}N,\nabla_{E_{i}}N)\, \\[1mm] 
& & \displaystyle \qquad -\, 4 \sum_{i\neq j} g(\nabla_{E_{i}}N,E_{j}) \, g(\nabla_{E_{j}}N,E_{i})\Bigg](y^{0})^2\, +\, \frac{1}{3} \, \tilde R_{k\ell} \, y^{k} \, y^{\ell}\Bigg\}\, + {\mathcal O} (\eps^{3}) 
\end{array}
\]
where $\rm Ric$ denotes the Ricci curvature of $\partial M$ and
$$
\tilde R_{k\ell} = \sum_{i=1}^{n} \tilde R_{iki\ell}\,.
$$
A straightforward computation {(still keeping in mind that $v_0 = \mathcal O (\eps)$)} shows that 
\[
\begin{array}{rllll}
 \displaystyle \big(\Delta- \Delta_{\hat {g}} \big)\phi_{1}  & = & \displaystyle  -\, \lambda_1 \, (1- (1+ v_0)^{-2}) \, \phi_1 \\[1mm] 
 & & \displaystyle +\, 2\,(1+ v_0)^{-1}\, \eps\, \sum_{i,j} g(\nabla_{E_{i}}N,E_{j})\, y^0\, \left( \frac{y^{i}y^{j}}{r^{2}} \, \partial_r^2 \phi_1 + \frac{\delta_j^i}{r} \, \partial_r \phi_1  -\, \frac{y^{i} y^{j}}{r^{3}} \, \partial_r \phi_1 \right)\\[1mm] 
 & & \displaystyle +\,\eps\,(1+ v_0)^{-1}\, {\rm H}(p) \,\frac{y^0}{r}\partial_r \phi_1 \\[3mm]
 & & \displaystyle +\, \, \eps^2\, \sum_{k,i,j, \ell} \left[ \left[R_{0i0j} + g(\nabla_{E_{i}}N,\nabla_{E_{j}}N)\right] (y^0)^2\, +\, 2\, R_{k0ij}\, y^{k} y^{0} +\, \frac{1}{3} \, \tilde R_{ikj\ell}\, y^{k} y^{\ell} \right] \cdot \\[1mm]
 & & \hspace{1cm}\displaystyle \qquad \cdot\,\left( \frac{y^{i}y^{j}}{r^{2}} \, \partial_r^2 \phi_1 + \frac{\delta_j^i}{r} \, \partial_r \phi_1  - \frac{ y^{i} y^{j}}{r^{3}} \, \partial_r \phi_1 \right)\\[5mm] 
 & & \displaystyle +\, \eps^2\, \sum_{k,i,j}\, \left( 2  R_{i0ij} \, y^{0}\, +\, \frac{1}{3} \, \tilde R_{ikji} \, y^{k}\, +\, \frac{1}{6} \, \tilde R_{ik} \, y^{k}\right)\, \frac{y^j}{r}\partial_r \phi_1\\[5mm]
& & \displaystyle +\, \eps^2\,\Bigg[-{\rm Ric}(N) + 4 \sum_{i\neq j} g(\nabla_{E_{i}}N,E_{i})\, g(\nabla_{E_{j}}N,E_{j})\, + \sum_{i} g(\nabla_{E_{i}}N,\nabla_{E_{i}}N) \\[1mm]
& & \hspace{1cm}\displaystyle \qquad  - 4 \sum_{i\neq j} g(\nabla_{E_{i}}N,E_{j})\, g(\nabla_{E_{j}}N,E_{i})\Bigg] \cdot\,\frac{(y^0)^2}{r}\partial_r \phi_1
\end{array}
\]
where $i,j,k = 1,...,n$.
Observe that we have used the fact that $R(X,X)\equiv 0$ and the symmetries of the curvature tensor for which $R_{ijkl} = R_{klij}$. Now, in the computation of 
\[
\int_{B_{1}^+}(\Delta - \Delta_{\hat g} ) \,  \phi_1\,  \langle \nabla \phi_{1} , b \rangle \,,
\]
observe that the terms in the expansion of $(\Delta- \Delta_{\hat {g}} ) \, \phi_{1}$ which contain an even number of coordinates different to $y^0$, such as $y^0$ or $y^{i}y^{j}y^{k}y^{\ell}$ or $(y^0)^2 y^{i}y^{j}$ etc.  do not contribute to the result  since, once multiplied by $\langle \nabla \phi_1  , b\rangle$ (keep in mind that $b = (0,b')$), their average over $S^{n}_+$ is $0$.  Therefore, we can write
\[
\begin{array}{rlllll}
 \displaystyle \int_{B_{1}^+} \left(\Delta - \Delta_{\hat g} \right) \phi_{1}\,  \langle \nabla \phi_1, \, b\rangle & = & \displaystyle \eps^{2} \, \sum_{\sigma \neq 0}\, \int_{{B_{1}^+}} \, \partial_r \phi_1 \,  a_{\sigma}  \frac{y^{\sigma}}{r} \cdot \\[1mm]
 &  &  \displaystyle \qquad \cdot \Bigg(\displaystyle  2\,\, \sum_{k,i,j} R_{k0ij} \, \left( \frac{y^{i}y^{j} y^{k} y^{0}}{r^{2}} \, \partial_r^2 \phi_1 - \frac{y^{i}y^{j} y^{k} y^{0}}{r^{3}} \, \partial_r \phi_1 \right)  +2\, \sum_{k,i,j}\,  R_{i0ij} \, \frac{y^{0}y^j}{r}\partial_r \phi_1\Bigg) \\[5mm]
 & & +\, \mathcal{O}(\eps^{3})
\end{array}
\]
We make use of the technical Lemmas \ref{tec1} and \ref{tec2} of the Appendix to conclude that
\begin{equation}\label{abc}
 \displaystyle \int_{B_{1}^+} \left(\Delta - \Delta_{\hat g} \right) \phi_{1} \, \langle \nabla \phi_1 , b\rangle  = \tilde C \, \eps^{2} \, {\tilde g \big(\nabla^{\tilde g} {\rm H(p)},  \Theta (b') \big)}\, +\, \mathcal{O}(\eps^{3}).
\end{equation}
where
\[
\tilde C = -2\, \left( \int_{S^{n}_+} y^0\, (y^{1})^{2} \right) \, \int_{B_{1}^+} r\, |\partial_r \phi_1 |^{2} \, .
\]

\medskip

Now we have to compute the terms
\[
\int_{B_{1}^+}(\Delta - \Delta_{\hat g} ) \,  (\hat \phi  - \phi_1)\,  \langle \nabla \phi_{1} , b \rangle \qquad \textnormal{and} \qquad
(\lambda_1 - \hat \lambda )  \, \int_{B_{1}^+} (\hat \phi - \phi_1) \,  \langle \nabla \phi_{1} , a \rangle \,.
\]
We observe that the coefficients of the metric, for $i,j = 1,...,n$, are given by
\[
\hat g_{ij}(y) = \delta_{ij} + \hat G_{ij}\, \eps\, y^0 + \mathcal{O}(\eps^2)
\]
for some constants $G_{ij}$. Then the $\eps$-first order term of $\hat \phi  - \phi_1$ is radial in the coordinates $y^1,..., y^n$, i.e. there exists a function $h \in C^{2,\alpha}([0,1]^2)$ such that 
\[
(\hat \phi  - \phi_1)(y^0,y') = \eps\, h(y^0,|y'|) + \mathcal{O}(\eps^2)\,.
\]
Let $\rho:=|y'|$. Using the same computation given above, we find
\[
\begin{array}{rllll}
 \displaystyle \big(\Delta - \Delta_{\hat {g}} \big)(\hat \phi  - \phi_1)  & = & \displaystyle  (1- (1+ v_0)^{-2}) \, \Delta (\hat \phi  - \phi_1)  \\[2mm] 
 & & \displaystyle +\, \mathcal{O}(\eps^2)\, \left( \frac{y^0y^{i}y^{j}}{\rho^{2}} \, \partial_\rho^2 h + \frac{y^{0}}{\rho} \delta_j^i \, \partial_\rho h  -\, \frac{y^{0}y^{i} y^{j}}{\rho^{3}} \, \partial_\rho h +\, \partial_{y^0} h\right) + \mathcal{O}(\eps^3)\\[2mm]
 & = & \displaystyle \mathcal{O}(\eps^2)\, \left(\tilde h(y^0,\rho) \, + \frac{y^0y^{i}y^{j}}{\rho^{2}} \, \partial_\rho^2 h + \frac{y^{0}}{\rho} \delta_j^i \, \partial_\rho h  -\, \frac{y^{0}y^{i} y^{j}}{\rho^{3}} \, \partial_\rho h +\, \partial_{y^0} h\right) + \mathcal{O}(\eps^3)
\end{array}
\]
for some function $\tilde h \in C^{0,\alpha}([0,1]^2)$, and the terms $\mathcal{O}(\eps^2)$ do not depend on the coordinates. As in the previous computation, terms which contain an even number of coordinates different to $y^0$ do not contribute to the result since, once multiplied by $ \langle \nabla \phi_1  , b\rangle$, their average over $S^{n}_+$ is $0$. Therefore
\[
\displaystyle \int_{B_{1}^+} (\Delta- \Delta_{\hat g} )  (\hat \phi  - \phi_1)\,\, \langle \nabla \phi_{1} , b\rangle  = \mathcal{O}(\eps^3).
\]
For the last term we have to estimate, the previous computation immediately implies that
\[
\int_{B_{1}^+}  (\hat \phi - \phi_1)\,\, \langle \nabla \phi_{1} , b \rangle = \mathcal{O}(\eps^2)
\]
and then 
\[
 (\lambda_1 - \hat \lambda )  \, \int_{B_{1}^+}  (\hat \phi - \phi_1)\,\, \langle \nabla \phi_{1} , b \rangle = \mathcal{O}(\eps^3)\,.
 \]
 
 \medskip
 
We conclude that 
\[
\int_{S^{n}_+} \frac{\partial \hat \phi}{\partial r} \, \langle \nabla \phi_1 , b\rangle  = \int_{B_{1}^+} (\Delta - \Delta_{\hat g} ) \phi_1\,\, \langle \nabla \phi_{1} , b \rangle + |b| \, \mathcal{O}(\eps^3) = \tilde C \, \eps^{2} \, {\tilde g \big(\nabla^{\tilde g} {\rm H(p)},  \Theta (b') \big)}\, + |b|\, \mathcal{O}(\eps^{3}) \,.
\]
The Lemma follows at once from \eqref{interm}, keeping in mind that $v_0 = \mathcal{O}(\eps)$.
\end{proof}

\begin{Lemma}\label{le:4444} Let $\bar v = \bar v(p, \eps) \in C^{2,\alpha}_{m,NC} (S^n_+)$ such that in the coordinates $y=(y^0,y')$ we have
\[
\bar v (y^0,y') =  \eps \, \tilde v_p (y^0) + \mathcal{O}(\eps^2)
\]
for some function $\tilde v_p \in C^{2,\alpha}([0,1])$. Then there exist two functions $\delta_p, \sigma_p  \in C^{2,\alpha}([0,1])$ such that
\[
((L_\eps  - L_0) \, \bar v ) (y^0,y') = \eps^2\, \delta_p(y^0) + \mathcal O (\eps^3)\,
\]
and
\[
F (p, \eps, \bar v)  - F(p, \eps, 0)  - L_\eps \bar v  = \eps^2\, \sigma_p(y^0) + \mathcal O (\eps^3)\,.
\]
\end{Lemma}

\begin{proof} Clearly both $L_\eps$ and $L_0$ are first order differential operators, and the dependence on $\eps$ is smooth. Now, the difference between the coefficients of $\bar g$ written in the coordinates $y$ defined in \eqref{p000} and the coefficient of the Euclidean metric can be estimated by 
\[
\bar g_{ij}(y^0,y') = \bar G_{ij}\, \eps\, y^0 + \mathcal O (\eps^2)
\]
If the function $\bar v$ is such that 
\[
\bar v (y^0,y') =  \eps \, \tilde v_p (y^0) + \mathcal{O}(\eps^2)
\]
for some function $\tilde v_p \in C^{2,\alpha}([0,1])$, it is then clear that 
\[
((L_\eps  - L_0) \, \bar v ) =  \eps\, ((L_\eps  - L_0) \, \tilde v_p ) + \mathcal{O}(\eps^3)
\]
where now the function $\tilde v_p$ is considered as a function on the coordinates $(y^0,y')$ by the simple relation $\tilde v_p (y^0,y')=\tilde v_p(y^0)$. Moreover if we consider the operator $F$ restricted to functions $\bar v$ that depend only on the first variable $y^0$, it is clear that the linearization of $F$ at $(p,\eps,0)$ maps from the subset of functions in $C^{2,\alpha}_{m,NC}$ that depend only on the first variable $y^0$ into the subset of functions in $C^{1,\alpha}_{m,NC}$ that depend only on the first variable $y^0$. Then there exists a function $\delta_p \in C^{1,\alpha}([0,1])$ such that
\[
((L_\eps  - L_0) \, \tilde v_p) (y^0,y') = \eps\, \delta_p(y^0) + \mathcal O (\eps^2)\,
\]
and then
\[
((L_\eps  - L_0) \, \bar v ) (y^0,y') = \eps^2\, \delta_p(y^0) + \mathcal O (\eps^3)\,.
\]

\medskip

Now let us estimate the second term. Taking in account that $\bar v = \mathcal{O}(\eps)$ we have
\[
F (p, \eps, \bar v)  = F(p, \eps, 0)  +  L_\eps \bar v  + L_\eps^2 (\bar v, \bar v) + \mathcal{O}(\eps^3)
\]
and then 
\[
F (p, \eps, \bar v)  - F(p, \eps, 0)  -  L_\eps \bar v  = L_\eps^2 (\bar v, \bar v) + \mathcal{O}(\eps^3)\,.
\]
If the function $\bar v$ is such that 
\[
\bar v (y^0,y') =  \eps \, \tilde v_p (y^0) + \mathcal{O}(\eps^2)
\]
then
\[
F (p, \eps, \bar v)  - F(p, \eps, 0)  -  L_\eps \bar v  = \eps^2\, L_\eps^2 (\tilde v_p, \tilde v_p) + \mathcal{O}(\eps^3)\,
\]
where again the function $\tilde v_p$ is considered as a function on the coordinates $(y^0,y')$ by $\tilde v (y^0,y')=\tilde v(y^0)$, and as for $L_\eps$ it is easy to see that $L_\eps^2$ maps from the subset of functions in $C^{2,\alpha}_{m,NC}$ that depend only on the first variable $y^0$ into the subset of functions in $C^{1,\alpha}_{m,NC}$ that depend only on the first variable $y^0$. Then there exists a function $\sigma_p \in C^{1,\alpha}([0,1])$ such that
\[
F (p, \eps, \bar v)  - F(p, \eps, 0)  - L_\eps \bar v  = \eps^2\, \sigma_p(y^0) + \mathcal O (\eps^3)\,.
\]
This completes the proof of the Lemma.
\end{proof}

We are now able to conclude the proof of Proposition \ref{kernelproj}. Using Lemma \ref{le:4444} we get 
\[
\int_{S^{n}_+} \, (F (p, \eps, \bar v)  - F(p, \eps, 0)  - L_\eps \bar v) \, \langle b, \cdot \rangle + \int_{S^{n}_+} \, (L_\eps  - L_0) \bar v \, \langle b, \cdot \rangle = \mathcal O (\eps^3)\,.
\]
Then, from \eqref{interme} and using Lemma \ref{le:3.3}, we have that for all $p \in \partial M$ and all $b \in \partial \mathbb{R}^{n+1}_+$ with $|b|=1$ the following $\eps$-expansion holds:
\[
\int_{S^{n}_+} F(p,\eps,\bar v(p,\eps)) \,  \langle b, \cdot \rangle  =  C \,\eps^2 \, \tilde g( \nabla^{\tilde g} {\rm H} (p) , \Theta (b') )  + \mathcal{O}(\eps^3) \,.
\]
This completes the proof of the Proposition.
\end{proof}

\subsection{Proof of Theorem \ref{maintheorem}}\label{ssect:mainproof}
Let $b = (0,b') \in \partial \mathbb{R}^{n+1}_+$ with $|b|=1$ and define
\[
G_b(p,\eps) : = \eps^{-2}\, \int_{S^{n}_+} F(p,\eps,\bar v(p,\eps)) \,  \langle b, \cdot \rangle  =  C \, \tilde g( \nabla^{\tilde g} {\rm H} (p) , \Theta (b') )  + \mathcal{O}(\eps) \,.
\]
Clearly if $\eps \neq 0$, we have that
\[
\int_{S^{n}_+} F(p,\eps,\bar v(p,\eps)) \,  \langle b, \cdot \rangle = 0   \qquad \Longleftrightarrow \qquad G_b(p, \eps) =0 \,.
\]
$G_b$ is a function defined on $\partial M \times [0,+\infty)$ into $\mathbb{R}$. By the assumption of our main Theorem \ref{maintheorem}, $\partial M$ has a nondegenerate critical point $p_0$ of the mean curvature. Then the differential of $G_b$ with respect to $p$ computed at $(p_0, 0)$ is invertible and $G_b(p_0,0) =0$. By the implicit function theorem, for all $\eps$ small enough there exists $p_\eps \in \partial M$ close to $p_0$ such that 
\[
G_b(p_\eps, \eps) =0
\]
for all $b \in \partial \mathbb{R}^{n+1}_+$ with $|b|=1$. In addition we have 
\[
\mbox{dist} (p_0 , p_\eps ) \leq c \, \eps
\]
We conclude then that 
\[
F(p_\eps,\eps,\bar v(p_\eps,\eps)) \in K^\bot
\]
where $K$ is the kernel of the operator $L_{0}$. But by the construction of $\bar v$, we have also that
\[
F(p_\eps,\eps,\bar v(p_\eps,\eps)) \in K
\]
and then 
\[
F(p_\eps,\eps,\bar v(p_\eps,\eps)) =0 \,.
\]
This means that the normal derivative of the first eigenfunction of the Laplace-Beltrami operator on $\Omega_\eps = B^+_{g,\eps} (p_\eps)$ with mixed boundary condition is constant on $\partial \Omega_\eps \cap \mathring M$ and then $\Omega_\eps$ is extremal. 
\medskip

The only remaining point in the proof of Theorem~\ref{maintheorem}, is the analyticity of $\partial\Om_{\eps}\cap\mathring{M}$ when $M$ itself is analytic. This is a classical consequence of the extremality condition, see \cite{KN}.

\section{Appendix}

\subsection{Expansion of the metric}

Take the local coordinates $x^0, x^1, ..., x^n$ in a neighborhood of a point $p \in \partial M$ that we introduced in \eqref{eq:normalcoord}. We denote the corresponding coordinate vector fields by
\[
X_{j}:= \Psi_{*}(\partial_{x^j})
\]
for $j = 0, 1, ..., n$. We want to write the expansion of the coefficients $g_{ij}$ of the metric $\Psi^* g$ in these coordinates. According with our notation, $E_{j}$ are the coordinate vector field $X_{j}$ evaluated at $p$.

\begin{Proposition}\label{fermi-exp}
At the point of coordinate $x = (x^0, x^1, ..., x^n)$, the following expansion holds~:
\begin{eqnarray*}
g_{00} & = & 1\\[1mm]
g_{0j} & = & 0\\[1mm]
g_{ij} & = & \delta_{ij}\, + 2\,g(\nabla_{E_{i}}N,E_{j})\,x^0\, + R_{0i0j} \, (x^{0})^2 + g(\nabla_{E_{i}}N,\nabla_{E_{j}}N)\,(x^0)^2 \\
& & + 2\, \sum_{k} R_{k0ij} \, x^{k} \, x^{0} + \frac{1}{3} \, \sum_{k,\ell} \tilde{R}_{ijkl} \, x^{k} \, x^{\ell} + \mathcal{O}(|x|^3)
\end{eqnarray*}
for $i,j,k,l = 1,...n$, where
\begin{eqnarray*}
R_{0i0j} & = & g\big( R(N, E_{i}) \, N ,E_{j}\big)\\
R_{k0ij} & = & g\big( R(E_{k}, N) \, E_i ,E_{j}\big)\\
\tilde R_{ijkl} & = & \tilde g\big( \tilde R(E_{i}, E_{k}) \, E_{j} ,E_{\ell}\big).
\end{eqnarray*}
Here $R$ and $\tilde R$ are respectively the curvature tensors of $M$ and $\partial M$.
\label{pr:1.1}
\end{Proposition}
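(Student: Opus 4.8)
\emph{Overall plan and Step 1 (the normal block).} The proof exploits the semi--geodesic structure of $\Psi$ and reduces everything to a second--order Taylor expansion, combining the classical expansion of a metric in geodesic normal coordinates on $(\partial M,\tilde g)$ with the Jacobi equation along the normal geodesics $x^0\mapsto\Psi(x^0,x')$. First, the identities $g_{00}=1$ and $g_{0j}=0$ are \emph{exact}, not merely asymptotic: for fixed $x'$ the curve $x^0\mapsto\Psi(x^0,x')=\mathrm{Exp}_{\Phi(x')}^M\big(x^0N(\Phi(x'))\big)$ is a unit--speed geodesic, so $\nabla_{X_0}X_0=0$ and $g_{00}=g(X_0,X_0)\equiv1$; since $[X_0,X_j]=0$ we get $\partial_{x^0}g_{0j}=g(X_0,\nabla_{X_0}X_j)=g(X_0,\nabla_{X_j}X_0)=\tfrac12 X_j\big(g(X_0,X_0)\big)=0$, while at $x^0=0$ one has $g_{0j}=g(N,\Phi_*\partial_{x^j})=0$ because $N$ is normal to $\partial M$; hence $g_{0j}\equiv0$.

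\emph{Step 2 (expansion in $x^0$).} For fixed $x'$ and $i=1,\dots,n$, the vector field $x^0\mapsto X_i(x^0,x')$ is the variation field of the geodesic variation $s\mapsto\Psi(\cdot,x'+se_i)$, hence a Jacobi field along $\gamma_{x'}(t)=\Psi(t,x')$ with $\gamma_{x'}'(0)=N(\Phi(x'))$ and initial data $X_i(0,x')=\Phi_*\partial_{x^i}=:\tilde X_i(x')$ and $\tfrac{D}{dt}X_i(0,x')=\nabla_{\tilde X_i(x')}N=:W_i(x')\in T_{\Phi(x')}\partial M$. Plugging the Jacobi equation $\tfrac{D^2}{dt^2}X_i=-R(X_i,\gamma_{x'}')\gamma_{x'}'$ into $g_{ij}=g(X_i,X_j)$ and expanding in $t=x^0$ yields
\[
g_{ij}(x^0,x')=\tilde g_{ij}(x')+2\,x^0\,h_{ij}(x')+(x^0)^2\Big(g(W_i,W_j)(x')-g\big(R(\tilde X_i,N)N,\tilde X_j\big)(x')\Big)+\mathcal O\big((x^0)^3\big),
\]
where $h_{ij}=g(W_i,\tilde X_j)=g(\tilde X_i,W_j)$ is the second fundamental form of $\partial M$ (the symmetry following by differentiating $g(X_0,X_i)\equiv0$, and the symmetry of the tensor $(i,j)\mapsto g(R(\tilde X_i,N)N,\tilde X_j)$ following from the pair symmetry of $R$).

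\emph{Step 3 (expansion in $x'$) and the main obstacle.} Now Taylor--expand each coefficient at $x'=0$, using that $\Phi$ is the geodesic normal coordinate chart of $(\partial M,\tilde g)$ at $p$ (radial lines are $\tilde g$--geodesics): the classical expansion gives $\tilde g_{ij}(x')=\delta_{ij}+\tfrac13\sum_{k,\ell}\tilde R_{ijk\ell}\,x^kx^\ell+\mathcal O(|x'|^3)$ with no linear term, while $h_{ij}(x')=g(\nabla_{E_i}N,E_j)+\big(\nabla^{\tilde g}_{E_k}h\big)(E_i,E_j)\,x^k+\mathcal O(|x'|^2)$, $g(W_i,W_j)(x')=g(\nabla_{E_i}N,\nabla_{E_j}N)+\mathcal O(|x'|)$, and $g\big(R(\tilde X_i,N)N,\tilde X_j\big)(x')=-R_{0i0j}+\mathcal O(|x'|)$. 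Substituting and collecting the terms of total degree $\le2$ in $(x^0,x')$ reproduces each term of the statement; in particular the $(x^0)^2$--coefficient is $g(\nabla_{E_i}N,\nabla_{E_j}N)+R_{0i0j}$ and the $x^kx^\ell$--coefficient is $\tfrac13\tilde R_{ijk\ell}$. The one point requiring care is the coefficient of $x^kx^0$: it emerges as a first variation of the second fundamental form of $\partial M$, and has to be rewritten in terms of the ambient curvature component $R_{k0ij}$ by invoking the Codazzi equation together with the first Bianchi identity and the symmetries of $R$. I expect this identification, and the consistent bookkeeping of sign conventions across the three steps, to be the only real obstacle; everything else is routine. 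An equivalent alternative avoids Jacobi fields altogether: compute $\partial_\mu g_{ij}$ and $\partial_\mu\partial_\nu g_{ij}$ at $0$ directly from $g_{ij}=g(X_i,X_j)$ using $\nabla g=0$, $[X_\mu,X_\nu]=0$, $\nabla_{X_0}X_0=0$, the radial--geodesic property of $x'\mapsto\Phi(x')$, and the definition of the curvature tensor — this is exactly the semi--geodesic analogue of the standard derivation of the normal--coordinate expansion.
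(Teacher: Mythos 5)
Your Steps 1 and 2 are correct and are, up to language, the paper's own argument: $g_{00}=1$ and $g_{0j}=0$ are proved there by exactly the Gauss-lemma computation you give, and the $x^0$-expansion is obtained from $\partial_{x^0}^2\,g(X_i,X_j)=2\,g(R(X_0,X_i)X_0,X_j)+2\,g(\nabla_{X_i}X_0,\nabla_{X_j}X_0)$, which is precisely your Jacobi-field computation (the identity $\nabla_{X_0}\nabla_{X_j}X_0=R(X_0,X_j)X_0$ \emph{is} the Jacobi equation for the variation field $X_j$). The ``equivalent alternative'' you sketch in your last sentence is literally the proof in the paper, and the tangential expansion via the classical normal-coordinate formula for $\tilde g_{ij}$ also matches.

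The genuine gap sits exactly where you located ``the only real obstacle'': the coefficient of $x^kx^0$. Your bookkeeping correctly identifies it as $2\,(\nabla^{\tilde g}_{E_k}h)(E_i,E_j)$, with $h_{ij}=g(\nabla_{E_i}N,E_j)$, but the proposed reduction of this to $2R_{k0ij}$ ``by the Codazzi equation, the first Bianchi identity and the symmetries of $R$'' cannot be carried out. The Codazzi equation only controls the part of $\nabla^{\tilde g}h$ antisymmetric in the pair $(k,i)$; its totally symmetric part is independent extrinsic data (take $M\subset\R^{n+1}$ flat and $\partial M$ a hypersurface with non-parallel second fundamental form: then $R\equiv 0$ while $\nabla^{\tilde g}h\neq 0$). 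There is also a bare algebraic obstruction: $(\nabla^{\tilde g}_{E_k}h)(E_i,E_j)$ is symmetric in $(i,j)$, whereas $R_{k0ij}=g(R(E_k,N)E_i,E_j)$ is antisymmetric in $(i,j)$, so no curvature identity can turn one into the other unless both vanish. The paper obtains the $R_{k0ij}$ term differently, by expanding $\partial_{x^k}\,g(\nabla_{X_i}N,X_j)$ and discarding at $p$ the term $g(\nabla_N\nabla_{X_k}X_i,X_j)$ --- and that discarded term is exactly where $\nabla^{\tilde g}h$ reappears (at $p$ it equals $\nabla^{\tilde g}_kh_{ij}+\nabla^{\tilde g}_ih_{kj}-\nabla^{\tilde g}_jh_{ki}$). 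So your (correct) computation and the stated coefficient do not agree, and the Codazzi-based identification you invoke to bridge them is a step that fails; to finish along your route you would have to keep the $\nabla^{\tilde g}h$ contribution explicitly rather than claim it is absorbed into $R_{k0ij}$.
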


This result of this proposition is very well known. For example, the same kind of coordinates that we use in this paper are also used in \cite{Pac-Sun}, and Proposition 5.1 of \cite{Pac-Sun} combined with the classical expansion of a metric in its geodesic normal coordinate (see for example \cite{willmore}) immediately implies our Proposition \ref{fermi-exp}. Nevertheless, in order to make the reading easier, we write the proof of the proposition.

\medskip

\begin{proof} We consider the mapping $F$.
The curve $x^0 \longmapsto F(x^0, x)$ being a geodesic we have
$g(X_0, X_0) \equiv 1$. This also implies that  $\nabla_{X_0} X_0 \equiv
0$  and hence  we get
\[
\partial_{x^0} g(X_0, X_j) = g ( \nabla_{X_0} \, X_0, X_j ) + g (
\nabla_{X_0} \, X_j, X_0 ) = g ( \nabla_{X_0} \, X_j, X_0 ) \, .
\]
The vector fields $X_0$ and $X_j$ being coordinate vector fields we have
$\nabla_{X_0} X_j = \nabla_{X_j} X_0$ and we conclude that
\[
2 \, \partial_{x^0} g(X_0, X_j) =  2 \, g ( \nabla_{X_j} \, X_0, X_0 ) =
\partial_{x^j} g(X_0, X_0) = 0 \, .
\]
Therefore, $g(X_0, X_j)$ does not depend on $x^0$ and since on $\partial M$
this quantity is $0$ for $j=1, \ldots, n$, we conclude that the metric
$g$ can be written as
\[
g = d(x^0)^2 + \bar g_{x^0} \, ,
\]
where $\bar g_{x^0}$ is a family of metrics on $\partial M$ smoothly
depending on $x^0$ (this is nothing but Gauss' Lemma). If
$\tilde g$ is the metric of $\partial M$ induced by $g$, we certainly  have
\[
\bar g_{x^0} =\tilde g + {\mathcal O} (x^0) \, .
\]

\medskip

We now derive the next term the expansion of $\bar g_{x^0}$ in powers of
$x^0$. To this aim, we compute
\[
\partial_{x^0} \, g  (X_i, X_j) =  g ( \nabla_{X_i} \, X_0, X_j ) +  g (
\nabla_{X_j} \, X_0, X_i ) \, ,
\]
for all $i,j = 1, \ldots, n$.   Since $X_0 =N$ on $\partial M$, we get
\[
\partial_{x^0} \, \bar g_{x^0} \, |_{x^0=0} = 2 \, g (\nabla_{\cdot} \, N, \cdot ) \, ,
\]
by definition of the second fundamental form.  This already implies that
\[
\bar g_{x^0} = \tilde g + 2 g (\nabla_{\cdot} \, N, \cdot )\, \, x^0 + {\mathcal O}
((x^0)^2) \, .
\]

\medskip

Using the fact that the $X_0$ and $X_j$ are coordinate vector fields, we
can compute
\begin{equation}
\partial_{x^0}^2 \, g (X_i, X_j) =  g( \nabla_{X_0} \, \nabla_{X_i}\,
X_0, X_j ) + g( \nabla_{X_0} \, \nabla_{X_j}\, X_0, X_i ) +  2 \, g(
\nabla_{X_i} \, X_0 , \nabla_{X_j} X_0 ).
\label{eq:1-11}
\end{equation}
By definition of the curvature tensor, we can write
\[
\nabla_{X_0} \, \nabla_{X_j} = R(X_0 , X_j) + \nabla_{X_j} \,
\nabla_{X_0} + \nabla_{[X_0, X_j]} \, ,
\]
which, using the fact that $X_0$ and $X_j$ are coordinate vector fields,
simplifies into
\[
\nabla_{X_0} \, \nabla_{X_j} = R(X_0 , X_j) + \nabla_{X_j} \,
\nabla_{X_0} \, .
\]
Since $\nabla_{X_0} \, X_0 \equiv 0$, we get
\[
\nabla_{X_0} \, \nabla_{X_j} X_0 = R(X_0 , X_j) \, X_0 \, .
\]
Inserting this into (\ref{eq:1-11}) yields
\[
\partial_{x^0}^2 \, g (X_i, X_j)  =  2 \, g( R(X_0, X_i) \, X_0, X_j ) +
2 \, g( \nabla_{X_i} \, X_0,  \nabla_{X_j} X_0 ) \, .
\]
Evaluation at $x^0=0$ gives
\[
\partial_{x^0}^2 \,  \bar g_{x^0} \, |_{x^0=0} = 2 \, g ( R(N,  \cdot )
\, N, \cdot )  + 2 \,g( \nabla_{\cdot} \, N ,  \nabla_{\cdot} N ) .
\]
This implies that 
\begin{equation}\label{expansion}
\bar g_{x^0} =  \tilde g + 2 g (\nabla_{\cdot} \, N, \cdot )\, \, x^0 + \left[g (\nabla_{\cdot} \, N,
\nabla_{\cdot} \, N) + g( R( N, \cdot  )\, N, \cdot )\right]\, (x^0)^2 + {\mathcal O}
((x^0)^3)
\end{equation}

\medskip 

Now that we have the first terms of the expansion of $\bar g_{x^0}$ in powers of
$x^0$ we find the expansion of these term with respect to the geodesic coordinates $(x^1,...,x^n)$ of $\partial M$ in a neighborhood of $p$. Recall that for $i,j,k,l = 1,...,n$
\begin{equation}\label{taylor}
\tilde g_{ij} = \delta_{ij} + \frac{1}{3} \, \sum_{k,\ell} \tilde R_{ikj\ell} \, x^{k} \, x^{\ell} + {\mathcal O}(|x|^{3}),
\end{equation}
where
\[
\tilde R_{ikj\ell} = \tilde g \big( \tilde R(E_{i}, E_{k}) \, E_{j} ,E_{\ell}\big)
\]
The proof of this fact can be found for example in \cite{willmore}. Moreover for $k = 1,...,n$ we have
\begin{eqnarray*}
\partial_{x^k} g (\nabla_{X_{i}} \, N, X_j ) & = & g (\nabla_{X_k}\nabla_{X_{i}} \, N, X_j ) + g (\nabla_{X_{i}} \, N, \nabla_{X_k} X_j )\\[1mm]
& = & g (\nabla_{X_k}\nabla_{N} \, X_i, X_j ) + g (\nabla_{X_{i}} \, N, \nabla_{X_k} X_j )\\[1mm]
& = & g(R(X_k,N)X_i,X_j) + g (\nabla_{N}\nabla_{X_k} \, X_i, X_j ) + g (\nabla_{X_{i}} \, N, \nabla_{X_k} X_j )
\end{eqnarray*}
and evaluated at $p$
\begin{equation}\label{primo}
\partial_{x^k} g (\nabla_{X_{i}} \, N, X_j ) |_p = g(R(E_k,N)E_i,E_j)
\end{equation}
From (\ref{expansion}), using (\ref{taylor}) and (\ref{primo}), we find the expansion of the metric in the coordinates $x^0,x^1,...,x^n$ up to the term of order $|x|^2$.
 \end{proof}

\subsection{Technical Lemmas}

\begin{Lemma}\label{tec1}
For all $\sigma =1, \ldots, n$, we have
$$
\sum_{i,j, k} \int_{S^{n}_+} R_{k0ij} \, x^0\, x^{i} \, x^{j} \, x^{k} \, x^{\sigma}  = 0.
$$
\end{Lemma}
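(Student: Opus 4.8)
The plan is to reduce the identity to a purely algebraic fact by pulling the curvature constants out of the integral and then playing the antisymmetry of the Riemann tensor against the symmetry of the monomial. First I would observe that each $R_{k0ij}=g(R(E_k,N)E_i,E_j)$ is a fixed number, namely the curvature of $M$ evaluated at $p$, and in particular independent of the integration variable. Hence the claimed identity is equivalent to
\[
\sum_{i,j,k} R_{k0ij}\, T^{\sigma}_{ijk}=0, \qquad \text{where}\quad T^{\sigma}_{ijk}:=\int_{S^{n}_{+}} x^0\, x^{i}\, x^{j}\, x^{k}\, x^{\sigma}.
\]

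Next I would record two elementary observations. On one hand, $T^{\sigma}_{ijk}$ is a totally symmetric function of the indices $i,j,k$ (indeed of $i,j,k,\sigma$ together), since it is just the integral of a product of coordinate functions; in particular $T^{\sigma}_{ijk}=T^{\sigma}_{jik}$. On the other hand, the Riemann tensor is antisymmetric in its last two arguments, so with the convention $R_{k0ij}=g(R(E_k,N)E_i,E_j)$ fixed just above the statement one has $R_{k0ij}=-R_{k0ji}$. Fixing $k$ and $\sigma$ and summing over $i$ and $j$, we are therefore contracting a quantity that is antisymmetric in $(i,j)$ with one that is symmetric in $(i,j)$, which forces $\sum_{i,j} R_{k0ij}\,T^{\sigma}_{ijk}=0$; summing the resulting zero over $k$ gives the lemma.

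The only point that requires a moment's care is the index bookkeeping: one must check that the pair of slots in which $T^{\sigma}$ is symmetric is exactly the pair in which $R_{k0ij}$ is antisymmetric, which is immediate here since $T^{\sigma}$ is symmetric in all four of its indices. There is no analytic content whatsoever once the constants $R_{k0ij}$ are factored out of the integral, so I do not anticipate any genuine obstacle; the whole argument is the antisymmetric–symmetric contraction above.
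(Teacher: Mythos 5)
Your argument is correct: with $R_{k0ij}=g(R(E_k,N)E_i,E_j)$ constant, the antisymmetry $R_{k0ij}=-R_{k0ji}$ contracted against the totally symmetric moments $T^\sigma_{ijk}=\int_{S^n_+}x^0x^ix^jx^kx^\sigma$ kills each partial sum $\sum_{i,j}R_{k0ij}T^\sigma_{ijk}$, hence the whole sum. This is, however, a genuinely different (and leaner) route than the paper's. The paper first invokes a parity argument on the hemisphere --- the moment $\int_{S^n_+}x^0x^ix^jx^kx^\sigma$ vanishes unless every index among $(i,j,k,\sigma)$ occurs an even number of times --- to reduce the sum to the diagonal terms $\sum_i\big(R_{\sigma 0ii}+R_{i0i\sigma}+R_{i0\sigma i}\big)\int_{S^n_+}x^0(x^i)^2(x^\sigma)^2$, and only then applies the same antisymmetry of the curvature tensor to see that each bracket vanishes. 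Your symmetric-versus-antisymmetric contraction bypasses the moment computation entirely and shows the statement holds for any domain and any weight in place of $S^n_+$ and $x^0$, which is a nice gain in generality and brevity; the paper's parity reduction, on the other hand, is exactly the tool that is indispensable in the companion Lemma \ref{tec2}, where the analogous contraction does not vanish identically and the value of the surviving moment matters, so the paper simply runs both lemmas through the same machinery.
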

\begin{proof}
 To see that we consider all terms of the above sum, obtained fixing the $4$-tuple $(i,k,j , \sigma)$. We observe that if in such a $4$-tuple there is an element that appears an odd number of time then $\displaystyle \int_{S^{n}_+} \, x^0\, x^{i} \, x^{j} \, x^{k} \, x^{\sigma} = 0$. Then
\[
\sum_{i,j, k} \int_{S^{n}_+} R_{k0ij} \, x^0\, x^{i} \, x^{j} \, x^{k} \, x^{\sigma} = \sum_{i} \int_{S^{n}_+} \Big( R_{\sigma0ii} + R_{i0i\sigma} + R_{i0\sigma i}\Big) \, x^0\, (x^{i})^2 \, (x^{\sigma})^2 = 0
\]
by the symmetries of the curvature tensor.
\end{proof}

\medskip

\begin{Lemma}\label{tec2}
For all $\sigma =1, \ldots, n$, we have
$$
\displaystyle \sum_{i,j}  \int_{S^{n}_+} R_{i0ij} \, x^0\, x^{j} \, x^{\sigma} = - \left( \int_{S^{n}_+} x^0\, (x^{1})^{2}\right) \, {\rm H}_{,\sigma}
$$
\end{Lemma}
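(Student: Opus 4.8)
The plan is to separate the computation into a purely combinatorial reduction using the symmetries of the hemisphere $S^n_+$, followed by a geometric identification of the surviving curvature contraction with the gradient of the mean curvature.

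First I would use that $S^n_+$ is invariant under every permutation of the coordinates $x^1,\dots,x^n$ and under each reflection $x^i\mapsto -x^i$ with $i\ge 1$ (the coordinate $x^0\ge 0$ plays no role in this). Hence $\int_{S^n_+} x^0\,x^j\,x^\sigma=0$ whenever $j\neq\sigma$, because the integrand is odd in $x^j$, while for $j=\sigma$ the integral $\int_{S^n_+} x^0\,(x^\sigma)^2$ does not depend on $\sigma$ and therefore equals $\int_{S^n_+} x^0\,(x^1)^2$. Consequently only the terms with $j=\sigma$ survive in the sum and
\[
\sum_{i,j}\int_{S^n_+} R_{i0ij}\,x^0\,x^j\,x^\sigma \;=\; \Big(\int_{S^n_+} x^0\,(x^1)^2\Big)\,\sum_i R_{i0i\sigma}\,,
\]
which is exactly the mechanism already exploited in the proof of Lemma \ref{tec1}. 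It then remains to prove the identity $\sum_i R_{i0i\sigma}=-\,{\rm H}_{,\sigma}(p)$.

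For this I would invoke formula \eqref{primo} from the proof of Proposition \ref{fermi-exp}, which identifies the curvature coefficient $R_{k0ij}=g(R(E_k,N)E_i,E_j)$ with the tangential derivative at $p$ of the second fundamental form, $R_{k0ij}=\partial_{x^k}\,g(\nabla_{X_i}N,X_j)\big|_p$. Since the coordinates $x^1,\dots,x^n$ are geodesic normal on $\partial M$ centred at $p$, the first derivatives of $\tilde g_{ij}$, hence of $\tilde g^{ij}$, vanish at $p$; differentiating ${\rm H}=\tilde g^{ij}\,g(\nabla_{X_i}N,X_j)$ along $\partial M$ therefore gives ${\rm H}_{,\sigma}(p)=\sum_i \partial_{x^\sigma} g(\nabla_{X_i}N,X_i)\big|_p$, whereas $\sum_i R_{i0i\sigma}=\sum_i \partial_{x^i} g(\nabla_{X_i}N,X_\sigma)\big|_p$ is the divergence of the second fundamental form at $p$. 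Matching these two contractions by means of the antisymmetries and the pair symmetry $R_{abcd}=R_{cdab}$ of the curvature tensor — i.e.\ the contracted Codazzi equation — and keeping track of the sign convention for ${\rm H}$ (defined through the inward normal $N$), yields $\sum_i R_{i0i\sigma}=-{\rm H}_{,\sigma}(p)$; together with the display above this proves the Lemma.

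The main obstacle is this last step: one has to carry out the index bookkeeping for the ambient curvature tensor carefully and contract the Codazzi identity in just the right way so that the outcome collapses to precisely $-{\rm H}_{,\sigma}$, with the signs attached to $N$, to the second fundamental form and to ${\rm H}$ all made consistent. The hemisphere-symmetry step, by contrast, is entirely routine.
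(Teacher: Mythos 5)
Your parity step is exactly the paper's argument and is correct: the reflections $x^i\mapsto -x^i$ ($i\geq 1$) kill all terms with $j\neq\sigma$, and permutation invariance gives $\int_{S^n_+}x^0(x^\sigma)^2=\int_{S^n_+}x^0(x^1)^2$. Beyond that point, however, the paper does not argue as you do: it simply asserts the contraction identity $\sum_i R_{i0i\sigma}=-{\rm H}_{,\sigma}$, reading it off from the conventions of its Fermi expansion (Proposition \ref{fermi-exp}, formula \eqref{primo}); there is no Codazzi argument in its proof.

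The genuine gap is in your final ``matching'' step, which is precisely the step you flag as the main obstacle. By pure tensor algebra, $\sum_i R_{i0i\sigma}=\sum_i g(R(E_i,N)E_i,E_\sigma)=-{\rm Ric}(N,E_\sigma)$ (up to the sign convention for $R$), and the contracted Codazzi equation identifies this quantity with the \emph{difference} of your two contractions, namely $\pm\big[\partial_\sigma(\mathrm{tr}\,h)-\sum_i(\nabla_{E_i}h)(E_i,E_\sigma)\big]$ with $h=g(\nabla_{\cdot}N,\cdot)$, not with either one of them. So the divergence of the second fundamental form cannot be converted into $\pm\,d(\mathrm{tr}\,h)$ by curvature symmetries: their discrepancy is an honest ambient Ricci term ${\rm Ric}(N,E_\sigma)$ with no reason to vanish at $p$. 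A quick sanity check shows the route cannot work as described: if $M$ is a Euclidean domain (the case of Fig.~\ref{fig1}), the ambient curvature tensor vanishes identically, so everything you can extract from the symmetries of $R$ and from Codazzi on the left-hand side is $0$, while ${\rm H}_{,\sigma}$ is generically nonzero; hence no bookkeeping of signs in the Codazzi identity can yield $\sum_i R_{i0i\sigma}=-{\rm H}_{,\sigma}$. Leaning on \eqref{primo} also makes your scheme internally inconsistent: \eqref{primo} equates $R_{k0ij}$, which is antisymmetric in $(i,j)$, with $\partial_{x^k}g(\nabla_{X_i}N,X_j)\big|_p$, which is symmetric, and taking $j=i$ and summing (exactly what your expression ${\rm H}_{,\sigma}=\sum_i\partial_{x^\sigma}g(\nabla_{X_i}N,X_i)\big|_p$ requires) would force ${\rm H}_{,\sigma}=0$. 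In short, the only nontrivial content of the lemma, the identity $\sum_i R_{i0i\sigma}=-{\rm H}_{,\sigma}$, is not established by your argument and cannot be obtained from the contracted Codazzi equation together with the symmetries of the curvature tensor; the paper imports it from its normal-coordinate expansion conventions without proof, and your proposal does not supply a valid substitute for that step.
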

\begin{proof}
Again, we find that $\displaystyle \int_{S^{n}_+} \, x^0\, x^{j}\, x^{\sigma} \, {\rm dvol}_{\mathring g} = 0$ unless the indices $j,\sigma$ are equal. Hence
\begin{eqnarray*}
\displaystyle \sum_{i,j}  \int_{S^{n}_+} R_{i0ij} \, x^0\, x^{j} \, x^{\sigma}&  = & \displaystyle \left( \int_{S^{n}_+} x^0\, (x^{\sigma})^{2} \right) \, \sum_{i} R_{i0i\sigma} = - \displaystyle \left( \int_{S^{n}_+} x^0\, (x^{1})^{2}  \right) \, {\rm H}_{,\sigma}
\end{eqnarray*}
This completes the proof of the result.
\end{proof}

\noindent\textbf{Acknowledgements}. This work was partially supported by the project Projet ANR-12-BS01-0007 OPTIFORM financed by the French Agence Nationale de la Recherche (ANR).

\end{document}